\DeclarePairedDelimiter{\group}{(}{)}
\DeclarePairedDelimiter{\sqgroup}{[}{]}
\DeclarePairedDelimiter{\set}{\{}{\}}
\DeclarePairedDelimiter{\sett}{\{}{\}}
\DeclarePairedDelimiter{\abs}{\vert}{\vert}
\newcommand{\xqed}[1]{%
  \leavevmode\unskip\penalty9999 \hbox{}\nobreak\hfill
  \quad\hbox{#1}}
\newenvironment{graytext}{\color{gray}}{\ignorespacesafterend}
\newcommand{\propositionref}[1]{Proposition~\ref{#1}}
\newcommand{\lemmaref}[1]{Lemma~\ref{#1}}
\newcommand{\theoremref}[1]{Theorem~\ref{#1}}
\newcommand{\equationref}[1]{Equation~\eqref{#1}}
\newcommand{\definitionref}[1]{Definition~\ref{#1}}
\newcommand{\corollaryref}[1]{Corollary~\ref{#1}}
\newcommand{\figureref}[1]{Figure~\ref{#1}}
\newcommand{\exampleref}[1]{Example~\ref{#1}}
\newcommand{\propertyref}[1]{Property~\ref{#1}}
\newcommand{\pths}{\Omega}
\newcommand{\posspace}{\mathcal{X}}
\newcommand{\intervalpq}{\sqgroup{p,q}}
\newcommand{\random}{\textnormal{R}}
\newcommand{\randomI}{I_\textnormal{R}(\pth)}
\newcommand{\frcstI}{I_\frcstsystem(\pth)}
\newcommand{\ch}{\textnormal{CH}}
\newcommand{\wch}{\textnormal{wCH}}
\newcommand{\co}{\textnormal{C}}
\newcommand{\s}{\textnormal{S}}
\newcommand{\ml}{\textnormal{ML}}
\newcommand{\wml}{\textnormal{wML}}
\newcommand{\churchI}{I_\textnormal{CH}(\pth)} 
\newcommand{\weakchurchI}{I_\wch(\pth)}
\newcommand{\compI}{I_\co(\pth)}
\newcommand{\mlI}{I_\ml(\pth)}
\newcommand{\weakmlI}{I_\wml(\pth)}
\newcommand{\schnorrI}{I_\s(\pth)}
\newcommand{\intervals}{\mathcal{I}}
\newcommand{\randomintervals}{\mathcal{I}_\random(\pth)}
\newcommand{\mlintervals}{\mathcal{I}_\ml(\pth)}
\newcommand{\weakmlintervals}{\mathcal{I}_\wml(\pth)}
\newcommand{\compintervals}{\mathcal{I}_\co(\pth)}
\newcommand{\churchintervals}{\mathcal{I}_\ch(\pth)}
\newcommand{\weakchurchintervals}{\mathcal{I}_\wch(\pth)}
\newcommand{\schnorrintervals}{\mathcal{I}_\s(\pth)}
\newcommand{\randompths}{\pths_\random(\frcstsystem)}
\newcommand{\mlpths}{\pths_\ml(\frcstsystem)}
\newcommand{\weakmlpths}{\pths_\wml(\frcstsystem)}
\newcommand{\comppths}{\pths_\co(\frcstsystem)}
\newcommand{\schnorrpths}{\pths_\s(\frcstsystem)}
\newcommand{\churchpths}{\pths_\ch(\frcstsystem)}
\newcommand{\weakchurchpths}{\pths_\wch(\frcstsystem)}
\newcommand{\sits}{\mathbb{S}}
\newcommand{\map}{\lambda}
\newcommand{\outcomes}{\sett{0,1}}
\newcommand{\frcstsystem}{\varphi}
\newcommand{\lfrcstsystem}{\underline{\frcstsystem}}
\newcommand{\ufrcstsystem}{\overline{\frcstsystem}}
\newcommand{\frcstsystems}{\Phi}
\newcommand{\process}{F}
\newcommand{\selection}{S}
\newcommand{\selectionsum}{\sum_{k=0}^{n-1}\selection(\pthtok)}
\newcommand{\selectionsdense}{\mathcal{S}_\textnormal{R}(\pth)}
\newcommand{\selectionsfdense}{\Sigma_\textnormal{R}(\pth)}
\newcommand{\multprocess}{D}
\newcommand{\mint}[1][\multprocess]{#1^{\raisebox{0.5pt}{$\scriptstyle \circledcirc$}}}
\newcommand{\supermartin}{M}
\newcommand{\realgrowth}{\tau}
\newcommand{\allowables}{\mathbb{A}}
\newcommand{\callowables}{\mathbb{A^+_{\textrm{C}}}}
\newcommand{\mlallowables}{\mathbb{A^+_{\textrm{ML}}}}
\newcommand{\tests}[1]{\overline{\mathbb{T}}(#1)}
\newcommand{\mltests}[1]{\overline{\mathbb{T}}_\ml(#1)}
\newcommand{\weakmltests}[1]{\overline{\mathbb{T}}_\wml(#1)}
\newcommand{\comptests}[1]{\overline{\mathbb{T}}_\co(#1)}
\newcommand{\schnorrtests}[1]{\overline{\mathbb{T}}_\s(#1)}
\newcommand{\testsupermartins}[1]{\overline{\mathbb{T}}(#1)}
\newcommand{\testscomp}[1]{\overline{\mathbb{T}}_{\textnormal{C}}(#1)}
\newcommand{\testsml}[1]{\overline{\mathbb{T}}_\textnormal{ML}(#1)}
\newcommand{\testswml}[1]{\overline{\mathbb{T}}_\textnormal{wML}(#1)}
\newcommand{\testsschnorr}[1]{\overline{\mathbb{T}}_\textnormal{S}(#1)}
\newcommand{\test}{T}
\newcommand{\callowabletests}[1]{\overline{\mathbb{M}}_{\mathrm{C}}(#1)}
\newcommand{\naturals}{\mathbb{N}}
\newcommand{\naturalswithzero}{{\mathbb{N}_0}}
\newcommand{\reals}{\mathbb{R}}
\newcommand{\posreals}{\mathbb{R}_{>0}}
\newcommand{\nonnegreals}{\mathbb{R}_{\geq0}}
\newcommand{\rationals}{\mathbb{Q}}
\newcommand{\posrationals}{\mathbb{Q}_{>0}}
\newcommand{\nonnegrationals}{\mathbb{Q}_{\geq0}}
\newcommand{\ex}{E}
\newcommand{\lex}{\underline{\ex}}
\newcommand{\uex}{\overline{\ex}}
\newcommand{\gambleinterval}[1][]{\sqgroup{\underline{\gamble},\overline{\gamble}}}
\newcommand{\init}{\square}
\newcommand{\pth}{\omega}
\newcommand{\pthat}[1]{\pth_{#1}}
\newcommand{\pthatn}{\pth_{n}}
\newcommand{\pthatnplus}{\pth_{n+1}}
\newcommand{\pthatk}{\pth_{k}}
\newcommand{\pthatkplus}{\pth_{k+1}}
\newcommand{\pthto}[1]{\pth_{1:#1}}
\newcommand{\pthton}{\pth_{1:n}}
\newcommand{\pthtonplus}{\pth_{1:n+1}}
\newcommand{\pthtok}{\pth_{1:k}}
\newcommand{\sittok}{\sit_{1:k}}
\newcommand{\sitton}{\sit_{1:n}}
\newcommand{\sitatkplus}{\sit_{k+1}}
\newcommand{\sit}{s}
\newcommand{\precedes}{\sqsubseteq}
\newcommand{\sprecedes}{\sqsubset}
\newcommand{\xval}[1][]{x_{#1}}
\newcommand{\xvaltolong}[1][n]{\xval[1],\dots,\xval[#1]}
\newcommand{\xvalto}[1]{\xval[1:#1]}
\newcommand{\xvaltok}{\xval[1:k]}
\newcommand{\xvalatkplus}{\xval[k+1]}
\newcommand{\gamble}{f}
\newcommand{\gambles}{\mathcal{L}(\posspace)}
\newcommand{\comp}{computable}
\newcommand{\lscomp}{lower semicomputable}
\newcommand{\ind}[1]{\mathbb{I}_{#1}}
\newcommand{\then}{\Rightarrow}
\newcommand{\adddelta}{\Delta}
\colorlet{lightlightlightgray}{lightgray!10}
\colorlet{lightlightgray}{lightgray!50}
\newenvironment{ECSQARU}{}{}
\newenvironment{ArxiveExt}{}{}
\begin{document}

\title{The Smallest Probability Interval a \\Sequence Is Random for: A Study \\ for Six Types of Randomness}%\thanks{Supported by organisation x.}}
\titlerunning{The Smallest Probability Interval a Sequence Is Random for}
% If the paper title is too long for the running head, you can set
% an abbreviated paper title here
%
\author{Floris Persiau %\orcidID{0000-0003-1239-9736} 
\and Jasper De Bock %\orcidID{000-0003-1950-0059} 
\and Gert de Cooman %\orcidID{0000-0002-0469-1422}
}
\authorrunning{F.~Persiau et al.}
% First names are abbreviated in the running head.
% If there are more than two authors, 'et al.' is used.
%
\institute{Foundations Lab for imprecise probabilities, Ghent University, Belgium}
\maketitle              % typeset the header of the contribution
\begin{abstract}
\begin{comment}
Classical randomness notions only allow to say whether an infinite binary sequence—--a path—--is random for a probability or not.
Consequently, randomness notions have mainly been compared by saying for which ones a path is random with respect to a probability p, and for which ones it is not.
%For some given path, we can compare these notions by saying for which ones it is random with respect to a probability p, and for which ones it is not.
%This is mainly how randomness notions have been compared: a binary classification.
Allowing for imprecision, however, completely changes our expressiveness to differentiate between such notions; in this paper, we provide a first attempt at defining how much different randomness notions are.
We do so by allowing for closed probability intervals.
It turns out that for many randomness notions, every path is (almost) random for some smallest interval.
We study the relations between these smallest interval forecasts, try to provide equivalent expressions for them, and use them to compare several randomness notions.
When do these smallest interval forecasts coincide?
%We study the relations between the smallest intervals a given path is random, provide alternative expression for them, and use them to compare randomness notions.
% Whilst doing this, we are also able to provide a partial answer to the open question whether there is for every path some smallest interval forecast for which it is Martin-Löf random.
\end{comment}
There are many randomness notions.
On the classical account, many of them are about whether a given infinite binary sequence is random for some given probability. 
If so, this probability turns out to be the same for all these notions, so comparing them amounts to finding out for which of them a given sequence is random.
This changes completely when we consider randomness with respect to probability intervals, because here, a sequence is always random for at least one interval, so the question is not if, but rather for which intervals, a sequence is random. We show that for many randomness notions, every sequence has a smallest interval it is (almost) random for.
We study such smallest intervals and use them to compare a number of randomness notions.
We establish conditions under which such smallest intervals coincide, and provide examples where they do not.
\begin{comment}
Classical randomness notions define whether an infinite binary sequence---a path---is random for some unique probability or not.
For each notion the answer is yes or no; and classical notion of randomness are then compared by looking at the sets of paths that are random for some given probability.
For example, if for every probability the set of paths that are X-random is smaller than the set of Y-random paths, then we call X-randomness a stronger notion of randomness than Y-randomness.
This is mainly how randomness notions have been compared.
By allowing for closed probability intervals---interval forecasts---, a

Moreover, it turns out that sometimes, randomness notions are quite alike, in the sense that the smallest interval forecast coincide for a given path, and sometimes they are quite unalike.

In this paper, we adopt a new approach to comparing randomness notions by looking at the smallest interval forecasts for which it is random.

In contrast with classical randomness notions, which only define what it means to be random for a probability, 

how much different are they?
\end{comment}

\keywords{probability intervals \and Martin-Löf randomness \and computable randomness \and Schnorr randomness \and Church randomness.}
\end{abstract}

\section{Introduction}
The field of algorithmic randomness studies what it means for an infinite binary sequence, such as $\pth=0100110100\dots$, to be random for an uncertainty model.
Classically, this uncertainty model is often a single (precise) probability $p \in \sqgroup{0,1}$.
%Every such notion defines whether an infinite binary sequence~$\pth$ is random for a probability $p$ or not.
Some of the best studied precise randomness notions are Martin-Löf randomness, computable randomness, Schnorr randomness and Church randomness.
They are increasingly weaker; for example, if a sequence~$\pth$ is Martin-Löf random for a probability $p$, then it is also computably random, Schnorr random and Church random for~$p$.
Meanwhile, these notions do not coincide; it is for example possible that a path~$\pth$ is Church random but not computably random for~$\nicefrac{1}{2}$.
From a traditional perspective, this is how we can typically differentiate between various randomness notions \cite{DowneyHirschfeldt2010,Wang1996}.
%\textcolor{gray}{These are the two main ways of looking at the differences and similarities between classical randomness notions.}

As shown by De Cooman and De Bock \cite{CoomanBock2017,CoomanBock2021V2,CoomanBock2021}, these traditional randomness notions can be generalised by allowing for imprecise-probabilistic uncertainty models, such as closed probability intervals $I \subseteq \sqgroup{0,1}$.
These more general randomness notions, and their corresponding properties, allow for more detail to arise in their comparison.
Indeed, every infinite binary sequence~$\pth$ is for example random for at least one closed probability interval.
And for the imprecise generalisations of many of the aforementioned precise randomness notions, we will see that for every (or sometimes many)~$\pth$, there is some smallest probability interval, be it precise or imprecise, that \(\pth\) is (almost) random for---we will explain the modifier `almost' further on.
It is these smallest probability intervals we will use to compare a number of different randomness notions.
%These more general uncertainty models reveal a richer mathematical structure in the field of algorithmic randomness.
%These more general uncertainty models will increase our expressiveness to compare different randomness notions.
%together with their properties
%The uncovering of this mathematical structure will increase our expressiveness to compare different randomness notions.
%opens up a whole new world of uncharted questions.
%Clearly, allowing for imprecision generates a new and larger mathematical playground of uncharted questions. \textcolor{gray}{realm}

%In this paper, we will focus on the following question: how much do the classical precise-probabilistic randomness notions differ when allowing for stationary imprecise-probabilistic uncertainty models, that is, when looking from a stationary imprecise perspective.
We will focus on the following three questions:
(i) when is there a well-defined smallest probability interval for which an infinite binary sequence~$\pth$ is (almost) random; (ii) are there alternative expressions for these smallest intervals; and (iii) for a given sequence~$\pth$, how do these smallest intervals compare for different randomness notions?
Thus, by looking from an imprecise perspective, we are able to do more than merely confirm the known differences between several randomness notions.
Defining randomness for closed probability intervals also lets us explore to what extent existing randomness notions are different, in the sense that we can compare the smallest probability intervals for which an infinite binary sequence is random.
Surprisingly, we will see that there is a large and interesting set of infinite sequences $\pth$ for which the smallest interval that $\pth$ is (almost) random for is the same for several randomness notions.
%Surprisingly, it turns out there is a large and interesting set of non-stationary precise-probabilistic uncertainty models such that if an infinite binary outcome sequence~$\pth$ is random for one of these uncertainty models, then the smallest intervals for which $\pth$ is (almost) random coincide for several of the previously mentioned classical randomness notions, and in that sense, we call them `quite alike'.
%This will lead us to conclude that precise randomness notions are quite alike.

Our contribution is structured as follows.
In Section~2, we introduce (im)pre\-cise uncertainty models for infinite binary sequences, and introduce a generic definition of randomness that allows us to formally define what it means for a sequence to have a smallest interval it is (almost) random for.
%describe what it generally means for an infinite binary sequence~$\pth$ to be random.
In Section~3, we provide the mathematical background on supermartingales that we need in order to introduce a number---six in all---of different randomness notions in Section~4: (weak) Martin-Löf randomness, computable randomness, Schnorr randomness, and (weak) Church randomness.
In the subsequent sections, we tackle our three main questions.
We study the existence of the smallest intervals an infinite binary sequence~$\pth$ is (almost) random for in Section~5. 
%, and are able to provide a partial answer to an open question; is there for every infinite sequence~$\pth$ a smallest interval for which it is Martin-Löf random.
In Sections 6 and 7, we provide alternative expressions for such smallest intervals and compare them; we show that these smallest intervals coincide under certain conditions, and provide examples where they do not.
\begin{ECSQARU}
To adhere to the page limit, the proofs of all novel results---that is, the proofs of all Propositions, Corollaries and Theorems that have no citation---are omitted. 
They are available in Appendix B of an extended on-line version~\cite{floris2021ecsqaru}.
\end{ECSQARU}

\section{Forecasting systems and randomness} \label{sec:frcst}

Consider an infinite sequence of binary variables~$X_1,\dots,X_n,\dots$, where every variable~$X_n$ takes values in the binary \emph{sample space} $\posspace \coloneqq \{0,1\}$, generically denoted by~$x_n$.
We are interested in the corresponding infinite outcome sequences $(x_1,\dots,x_n,\dots)$, and, in particular, in their possible randomness.
We denote such a sequence generically by~$\pth$ and call it a \emph{path}.
All such paths are collected in the set~$\pths \coloneqq \posspace^\naturals$.\footnote{$\naturals$ denotes the natural numbers and~$\naturalswithzero\coloneqq\naturals \cup \{0\}$ denotes the non-negative integers.\protect\footnotemark}
\footnotetext{A real $x \in \reals$ is called negative, positive, non-negative and non-positive, respectively, if $x<0$, $x>0$, $x\geq 0$ and~$x \leq 0$.} 
For any path~$\pth=(\xvaltolong,\dots) \in \pths$, we let $\pthton \coloneqq (\xvaltolong)$ and~$\pthatn \coloneqq x_n$ for all~$n \in \naturals$.
For~$n=0$, the empty sequence~$\pthto{0}\coloneqq\pthat{0}\coloneqq()$ is called the \emph{initial situation} and is denoted by $\init$.
For any~$n \in \naturalswithzero$, a finite outcome sequence~$(x_1,\dots,x_n)~\in~\posspace^n$ is called a \emph{situation}, also generically denoted by~$\sit$, and its length is then denoted by $\abs{\sit}\coloneqq n$.
All situations are collected in the set~$\sits\coloneqq \bigcup_{n \in \naturalswithzero} \posspace^n$.
For any~$\sit=(\xvaltolong) \in \sits$ and~$x \in \posspace$, we use $\sit x$ to denote the concatenation $(\xvaltolong,x)$.

The randomness of a path~$\pth \in \pths$ is always defined with respect to an uncertainty model. 
Classically, this uncertainty model is a real number~$p \in \sqgroup{0,1}$, interpreted as the probability that $X_n$ equals $1$, for any~$n \in \naturals$.
As explained in the Introduction, we can generalise this by considering a closed probability interval $I \subseteq \sqgroup{0,1}$ instead.
%every~$p \in I$ can be interpreted as a possible probability for~$X_n$, with $n \in \naturals$, to be equal to $1$.
These uncertainty models will be called \emph{interval forecasts}, and we collect all such closed intervals in the set~$\intervals$.
Another generalisation of the classical case consists in allowing for non-stationary probabilities that depend on~$\sit$ or $\abs{\sit}$.
Each of these generalisations can themselves be seen as a special case of an even more general approach, which consists in providing every situation~$\sit \in \sits$ with a (possibly different) interval forecast in $\intervals$, denoted by $\frcstsystem(\sit)$.
This interval forecast~$\frcstsystem(\sit) \in \intervals$ then describes the uncertainty about the \emph{a priori} unknown outcome of $X_{\abs{\sit}+1}$, given that the situation~$\sit$ has been observed.
We call such general uncertainty models \emph{forecasting systems}.
%We even allow more general uncertainty models by considering uncertainty models that associate with every situation~$\sit \in \sits$ an interval forecast~$I_\sit \in \intervals$.
\begin{definition}
A \emph{forecasting system} is a map~$\frcstsystem\colon\sits\to\intervals$ that associates with every situation~$\sit \in \sits$ an interval forecast~$\frcstsystem(\sit) \in \intervals$.
We denote the set of all forecasting systems by $\frcstsystems$.
\end{definition}
With any forecasting system~$\frcstsystem \in \frcstsystems$, we associate two real processes~$\lfrcstsystem$ and~$\ufrcstsystem$, defined by $\lfrcstsystem(\sit)\coloneqq\min \frcstsystem(\sit)$ and 
$\ufrcstsystem(\sit)\coloneqq\max \frcstsystem(\sit)$ for all~$\sit \in \sits$.
A forecasting system~$\frcstsystem \in \frcstsystems$ is called \emph{precise} if $\lfrcstsystem=\ufrcstsystem$.
A forecasting system~$\frcstsystem \in \frcstsystems$ is called \emph{stationary} if there is an interval forecast~$I \in \intervals$ such that $\frcstsystem(\sit)=I$ for all~$\sit \in\sits$; for ease of notation, we will then denote this forecasting system simply by $I$.
The case of a single probability $p$ corresponds to a stationary forecasting system with $I=\set{p}$.
A forecasting system~$\frcstsystem \in \frcstsystems$ is called \emph{temporal} if its interval forecasts~$\frcstsystem(\sit)$ only depend on the situations $\sit \in \sits$ through their length $\abs{\sit}$, meaning that $\frcstsystem(\sit)=\frcstsystem(t)$ for any two situations $\sit,t \in \sits$ that have the same length $\abs{\sit}=\abs{t}$.
%the length $\abs{\sit}$, for all~$\sit \in \sits$.

In some of our results, we will consider forecasting systems that are computable.
To follow the argumentation and understand our results, the following intuitive description will suffice: a forecasting system~$\frcstsystem \in \frcstsystems$ is \emph{computable} if there is some finite algorithm that, for every~$\sit \in \sits$ and any $n \in \naturalswithzero$, can compute the real numbers $\lfrcstsystem(\sit)$ and~$\ufrcstsystem(\sit)$ with a precision of $2^{-n}$.
\begin{ECSQARU}
For a formal definition of computability, which we use in our proofs, we refer the reader to Appendix~A of the extended on-line version, which contains these proofs \cite{floris2021ecsqaru}.
\end{ECSQARU}
\begin{ArxiveExt}
For a formal definition of computability, which we use in our proofs, we refer the reader to Appendix~A.
\end{ArxiveExt}

So what does it mean for a path~$\pth \in \pths$ to be random for a forecasting system~$\frcstsystem \in \frcstsystems$?
Since there are many different definitions of randomness, and since we intend to compare them, we now introduce a general abstract definition and a number of potential/desirable properties of such randomness notions that will, as it turns out, allow us to do so.
%Generally speaking, a \emph{notion of randomness} \random \: defines what this means.
\begin{definition} \label{def:random}
A notion of \emph{randomness} $\random$ associates with every forecasting system~$\frcstsystem \in \frcstsystems$ a set of paths~$\pths_\random(\frcstsystem)$.
A path~$\pth \in \pths$ is called $\random$-random for~$\frcstsystem$ if $\pth \in \pths_\random(\frcstsystem)$.
\end{definition}

All of the randomness notions that we will be considering further on, satisfy additional properties.
The first one is a monotonicity property, which we can describe generically as follows.
If a path~$\pth \in \pths$ is \random-random for a forecasting system~$\frcstsystem \in \frcstsystems$, it is also \random-random for any forecasting system~$\frcstsystem' \in \frcstsystems$ that is less precise, meaning that $\frcstsystem(\sit) \subseteq \frcstsystem'(\sit)$ for all~$\sit \in \sits$.
Consequently, this monotonicity property requires that the more precise a forecasting system is, the fewer \random-random paths it ought to have.

\begin{property}\label{prop:monotone}
For any two forecasting systems~$\frcstsystem,\frcstsystem' \in \frcstsystems$ such that $\frcstsystem \subseteq \frcstsystem'$, it holds that $\pths_\random(\frcstsystem) \subseteq \pths_\random(\frcstsystem')$.
%If a path~$\pth \in \pths$ is \random-random for a forecasting system~$\frcstsystem \in \frcstsystems$, then $\pth$ is also \random-random for any forecasting system~$\frcstsystem' \in \frcstsystems$ for which $\frcstsystem\subseteq\frcstsystem'$, meaning that $\frcstsystem(\sit) \subseteq \frcstsystem(\sit)$ for all~$\sit \in \sits$.
\end{property}
\noindent Furthermore, it will also prove useful to consider the property that every path~$\pth\in\pths$ is \random-random for the (maximally imprecise) \emph{vacuous forecasting system}~$\frcstsystem_v \in \frcstsystems$, defined by $\frcstsystem_v(\sit) \coloneqq \sqgroup{0,1}$ for all~$\sit \in \sits$.
\begin{property} \label{proper:non-empty}
$\pths_\random(\sqgroup{0,1}) = \pths$.
%there is at least one path~$\pth \in \pths$ such that $\pth \in \pths_\random(\frcstsystem)$.
\end{property}
\noindent Thus, if Properties~\ref{prop:monotone} and~\ref{proper:non-empty} hold, every path~$\pth \in \pths$ will in particular be \random-random for at least one interval forecast---the forecast $I=\sqgroup{0,1}$---and if a path~$\pth \in \pths$ is \random-random for an interval forecast~$I \in \intervals$, then it will also be \random-random for any interval forecast~$I' \in \intervals$ for which $I \subseteq I'$.
It is therefore natural to wonder whether every path~$\pth \in \pths$ has some \emph{smallest} interval forecast~\(I\) such that~\(\pth\in\pths_\random(I)\).
In order to allow us to formulate an answer to this question, we consider the sets~$\randomintervals$ that for a given path~$\pth \in \pths$ contain all interval forecasts~$I \in \intervals$ that $\pth$ is \random-random for.
If there is such a smallest interval forecast, then it is necessarily given by 
\begin{equation*}
I_\random(\pth) 
\coloneqq \bigcap \randomintervals =
\smashoperator{\bigcap_{I \in \randomintervals}}I.
%=
%\bigcap \mathcal{I}_\random(\pth),
\end{equation*}
As we will see, for some randomness notions~\random, $\randomI$ will indeed be the smallest interval forecast that $\pth$ is random for.
Consequently, for these notions, and for every $\pth \in \pths$, the set~$\randomintervals$ is completely characterised by the interval forecast~$\randomI$, in the sense that $\pth$ will be \random-random for an interval forecast~$I \in \intervals$ if and only if $\randomI \subseteq I$.

In general, however, this need not be the case.
For example, consider the situation depicted in \figureref{fig:intervalsclarify}. 
It could very well be that for some randomness notion \random \: that satisfies Properties~\ref{prop:monotone} and~\ref{proper:non-empty}, there is a path~$\pth^\ast \in \pths$ that is \random-random for all interval forecasts of the form~$\sqgroup{p,1}$ and~$\sqgroup{0,q}$, with $p<\nicefrac{1}{3}$ and~$\nicefrac{2}{3}\leq q$, but for no others.
Then clearly, $I_\random(\pth^\ast) = \sqgroup{\nicefrac{1}{3},\nicefrac{2}{3}}$, but $\pth^\ast$ is not \random-random for~$I_\random(\pth^\ast)$.
% Meanwhile, Properties~\ref{prop:monotone} and~\ref{proper:non-empty} still hold for the path~$\pth^\ast$ when we restrict our attention to interval forecasts.

\begin{figure}[h]
\centering
%\resizebox{\textwidth}{!}{
\begin{tikzpicture}[xscale=5,yscale=2]\footnotesize
\useasboundingbox (0,0) rectangle (1,1.2);

% the unit interval
\draw[->] (-0.05,1) -- (1.05,1) node[above right] {$\reals$};
\draw[thick] (0,1) node[circle,inner sep=1pt,fill] {} node [above=3pt] {\(0\)} -- (1,1) node[circle,inner sep=1pt,fill] {} node[above=2pt] {\(1\)};
%\node[right] at (1,1) {\(\sqgroup{0,1}\)};
\draw[thick] (1/3,1) node[circle,inner sep=1pt,fill] {} node [above=1pt] {\(\nicefrac{1}{3}\)} -- (2/3,1) node[circle,inner sep=1pt,fill] {} node[above=1pt] {\(\nicefrac{2}{3}\)};

% help lines
\draw[densely dotted] (0,1) -- (0,-0.1) (1,1) -- (1,-0.1);

% help lines
\draw[densely dotted] (1/3,1) -- (1/3,-0.1) (2/3,1) -- (2/3,-0.1);

% a good interval
\draw[thick,green!60!black] (0,0.8) node[circle,inner sep=1pt,fill] {} -- (2/3,0.8) node[circle,inner sep=1pt,fill] {};
\node[right,green!60!black] at (1,0.8) {\(\sqgroup{0,\nicefrac{2}{3}}\)};

% another good interval
\draw[thick,green!60!black] (0.2,0.6) node[circle,inner sep=1pt,fill] {} -- (1,0.6) node[circle,inner sep=1pt,fill] {};
\node[right,green!60!black] at (1,0.6) {\(\sqgroup{0.2,1}\)};

% a bad interval
\draw[thick,red!90!black] (0.1,0.4) node[circle,inner sep=1pt,fill] {} node [below=4pt] {} -- (0.8,0.4) node[circle,inner sep=1pt,fill] {} node[below=5pt] {};
\node[right,red!90!black] at (1,0.4) {\(\sqgroup{0.1,0.8}\)};

% another bad interval
\draw[thick,red!90!black] (1/3,0.2) node[circle,inner sep=1pt,fill] {} node [below=4pt] {} -- (1,0.2) node[circle,inner sep=1pt,fill] {} node[below=5pt] {};
\node[right,red!90!black] at (1,0.2) {\(\sqgroup{\nicefrac{1}{3},1}\)};

% another one bad interval
\draw[thick,red!90!black] (1/3,0) node[circle,inner sep=1pt,fill] {} node [below=4pt] {} -- (2/3,0) node[circle,inner sep=1pt,fill] {} node[below=5pt] {};
\node[right,red!90!black] at (1,0) {\(\sqgroup{\nicefrac{1}{3},\nicefrac{2}{3}}\)};

\end{tikzpicture}
%}
\caption{%Consider a path~$\pth \in \pths$ that is almost \random-random for the interval forecast~$I_\random\coloneqq\sqgroup{0.4,0.7}$.
%\protect\footnotemark 
The green intervals correspond to interval forecasts for which $\pth^\ast$ is \random-random, whereas the red intervals correspond to interval forecasts that $\pth^\ast$ is not \random-random for.}
\label{fig:intervalsclarify}
\end{figure}
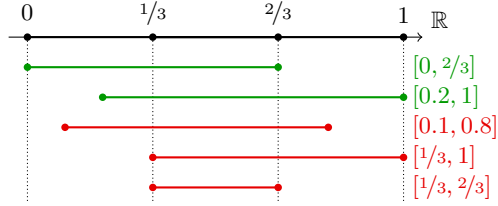

In addition, it need not even be guaranteed that the intersection $\randomI$ is non-empty. 
To guarantee that it will be, and as an imprecise counterpart of the law of large numbers, it suffices to consider the additional property that if a path~$\pth \in \pths$ is \random-random for an interval forecast~$I \in \intervals$, then this $I$ should imply the following bounds on the relative frequency of ones along~$\pth$.
% As a sufficient condition to guarantee that it will be, and as an imprecise counterpart of the law of large numbers, we may require as an additional property that if a path~$\pth \in \pths$ is \random-random for an interval forecast~$I \in \intervals$, then this $I$ should imply the following bounds on the relative frequency of ones along~$\pth$.

\begin{property} \label{prop:law:large:numbers}
For all interval forecasts~$I \in \intervals$ and all paths~$\pth \in \pths_\random(I)$, it holds that $\min I \leq \liminf_{n \to \infty} \frac{1}{n} \sum_{k=1}^{n}\pthatk \leq \limsup_{n \to \infty} \frac{1}{n} \sum_{k=1}^{n}\pthatk \leq \max I$.
\end{property}
Properties~\ref{prop:monotone}--\ref{prop:law:large:numbers} hold for all randomness notions~\random \: that we will consider, whence also, $\randomI \neq \emptyset$.
We repeat that for some of these notions, $\randomI$ will be the smallest interval forecast that~$\pth \in \pths$ is \random-random for.
If not, we will sometimes still be able to show that $\randomI$ is the smallest interval forecast that~$\pth$ is \emph{almost} \random-random for.
\begin{definition} \label{def:almost}
A path~$\pth \in \pths$ is called \emph{almost} \random-random for an interval forecast~$I \in \intervals$ if it is \random-random for any interval forecast~$I' \in \intervals$ of the form 
\begin{equation*}
I'=\sqgroup{\min I - \epsilon_1,\max I + \epsilon_2} \cap \sqgroup{0,1} \textrm{, with }\epsilon_1,\epsilon_2>0.
\end{equation*} 
\end{definition}
If a path~$\pth \in \pths$ is almost \random-random for the interval forecast~$\randomI$, then $\randomI$ almost completely characterises the set~$\randomintervals$: the only case where we cannot immediately decide whether a path~$\pth$ is \random-random for an interval forecast~$I \in \intervals$ or not, occurs when $\min I= \min \randomI$ or $\max I= \max \randomI$.
Moreover, if \propertyref{prop:monotone} holds, then as our terminology suggests, $\pth \in \pths$ is \emph{almost} \random-random for every interval forecast $I \in \intervals$ it is random for.
%all interval forecasts~$I \in \randomintervals$.
\begin{comment}
\begin{property} \label{prop:almost:smallest:interval}
A path~$\pth \in \pths$ is \emph{almost completely} $\random$-random for an interval forecast~$I_\random \in \intervals$ if it is \random-random for any interval forecast~$I \in \intervals$ of the form 
\begin{equation*}
I=\sqgroup{\min I_\random - \epsilon_1,\max I_\random + \epsilon_2} \cap \sqgroup{0,1} \textrm{, with }\epsilon_1,\epsilon_2>0,
\end{equation*} 
and not \random-random for any interval forecast~$I \in \intervals$ if 
\begin{equation*}
\min I_\random < \min I \textrm{ or } \max I < \max I_\random.
\end{equation*}
\end{property}
\end{comment}
\begin{comment}
To clarify the previous property, we provide a graphical representation in \figureref{fig:intervals}.
\begin{figure}[h]
\centering
%\resizebox{\textwidth}{!}{
\input{intervals.tikz}
%}
\caption{The path~$\pth \in \pths$ is almost \random-random for the interval forecast~$I_\random\coloneqq\sqgroup{0.4,0.7}$.
%\protect\footnotemark 
The green interval corresponds to an interval forecast that $\pth$ is \random-random for, whereas the red intervals correspond to interval forecasts that $\pth$ is not \random-random for.}
\label{fig:intervals}
\end{figure}
%\footnotetext{It is clear from \exampleref{ex:pq} that this is always possible.}
\end{comment}

In the remainder of this contribution, we intend to study the smallest interval forecasts a path is (almost) random for, for several notions of randomness.
In the next section, we start by introducing the mathematical machinery needed to introduce some of these notions, and in particular, the martingale-theoretic approach to randomness, which makes extensive use of the concept of betting.
Generally speaking, a path~$\pth \in \pths$ is then considered to be random for a forecasting system~$\frcstsystem \in \frcstsystems$ if a subject can adopt no implementable betting strategy that is allowed by $\frcstsystem$ and makes him arbitrarily rich along~$\pth$.
This approach will enable us to introduce the notions of Martin-Löf randomness, weak Martin-Löf randomness, computable randomness and Schnorr randomness, which differ only in what is meant by `implementable' and in the way a subject should not be able to get arbitrarily rich \cite{DowneyHirschfeldt2010}.

\begin{comment}
\textcolor{red}{
in order to introduce some randomness notions
a betting strategy that is allowed by the forecasting system/a betting strategy that corresponds with
in the martingale-theoretic framework
as has been introduced by De Cooman and De Bock
generally speaking, a path~$\pth \in \pths$ is random for ...
a forecasting system if a subject can adopt no betting strategy that is allowed by the forecasting system and makes her arbitrarily rich along~$\pth$.
martingale-theoretic
}
\end{comment}

\section{A martingale-theoretic approach---betting strategies} \label{sec:betting}
Consider the following betting game involving an infinite sequence of binary variables~$X_1,\dots,X_n,\dots$
There are three players: Forecaster, Sceptic and Reality.

Forecaster starts by specifying a forecasting system~$\frcstsystem \in \frcstsystems$.
For every situation~$\sit \in \sits$, the corresponding interval forecast~$\frcstsystem(\sit)$ expresses for every \emph{gamble}~$\gamble \colon \posspace \to \reals$ whether or not Forecaster allows Sceptic to select $\gamble$; the set of all gambles is denoted by $\gambles$.
A gamble~$g \in \gambles$ is offered by Forecaster to Sceptic if its expectation $\ex_p(g)\coloneqq p g(1)+(1-p)g(0)$ is non-positive for every probability $p \in I$, or equivalently, if $\max_{p \in I}\ex_p(g) \leq 0$.

%with every interval forecast~$I \in \intervals$ there corresponds a set of \emph{allowable} gambles consisting of all gambles $g \in \gambles$ such that for every probability $p \in I$ their expectation satisfies $p g(1)+(1-p)g(0) \leq 0$ \cite{CoomanBock2021,floris2021}.
%for which the expectation with respect to every probability $p \in I$ satisfies $p g(1)+(1-p)g(0) \leq 0$ \cite{CoomanBock2021,floris2021}.
%Namely, with every interval forecast~$I=\interval \in \intervals$ there corresponds a set of \emph{allowable} gambles $g \in \gambles$ defined by $g(1)=\alpha(p-1)+\beta(1-q)$ and~$g(0)=\alpha p-\beta q$, with $\alpha,\beta \geq 0$, $p \leq \intervalmin$ and~$q \geq \intervalmax$ 
%; such a set of gambles is depicted in \figureref{fig:uex(f)<=0}.
After Forecaster has specified a forecasting system~$\frcstsystem \in \frcstsystems$, Sceptic selects a \emph{betting strategy} that specifies for every situation~$\sit \in \sits$ an \emph{allowable} gamble~$\gamble_\sit \in \gambles$ for the corresponding interval forecast~$\frcstsystem(\sit) \in \intervals$, meaning that $\max_{p \in \frcstsystem(\sit)}\ex_p(\gamble_\sit) \leq 0$.

The betting game now unfolds as Reality reveals the successive elements $\pthatn \in \posspace$ of a path~$\pth \in \pths$. % by sharing at every \emph{time instant} $n \in \naturalswithzero$ the successive element $\pthatnplus \in \posspace$.
In particular, at every time instant $n \in \naturalswithzero$, the following actions have been and are completed: Reality has already revealed the situation $\pthton$, Sceptic engages in a gamble~$\gamble_{\pthton} \in \gambles$ that is specified by his betting strategy, Reality reveals the next outcome $\pthatnplus~\in~\posspace$, and Sceptic receives a (possibly negative) reward $\gamble_{\pthton}(\pthatnplus)$.
We furthermore assume that Sceptic starts with initial unit capital, so his running capital at every time instant $n \in \naturalswithzero$ equals $1+\sum_{k=0}^{n-1} \gamble_{\pthtok}(\pthatkplus)$.
We also don't allow Sceptic to borrow.
This means that he is only allowed to adopt betting strategies that, regardless of the path that Reality reveals, will guarantee that his running capital never becomes negative.
\begin{comment}
\begin{figure}[H]
\centering
\resizebox{0.5\textwidth}{!}{
\input{gamblesconeSceptic.tikz}
}
\caption{Let \(I \coloneqq \sqgroup{\nicefrac{1}{4},\nicefrac{3}{4}}\). 
%Then the leftmost blue region represents Sceptic's allowable gambles \(\gamble \in \gambles\), the middle blue region represents Sceptic's allowable additive gambles $\gamble \in \gambles$, and the rightmost blue region represents Sceptic's allowable multiplier gambles \(\gamble \in \gambles\).}
The blue region represents Sceptic's allowable gambles \(g \in \gambles\) for \(\sqgroup{\nicefrac{1}{4},\nicefrac{3}{4}}\).}
%that correspond to an uncertain reward \(\alpha(p-X)+\beta(X-q)\), with \(p~\leq~\nicefrac{1}{4}\), \(q~\geq~\nicefrac{3}{4}\) and \(\alpha,\beta\geq0\) .}
\label{fig:uex(f)<=0}
\end{figure}
\end{comment}

In order to formalise Sceptic's betting strategies, we will introduce the notion of test supermartingales.
We start by considering a \emph{real process} $\process\colon\sits\to\reals$; it is called positive if $\process(\sit) >0$ for all~$\sit \in \sits$ and non-negative if $\process(\sit) \geq 0$ for all~$\sit \in \sits$.
A real process~$\process$ is called \emph{temporal} if $\process(\sit)$ only depends on the situation~$\sit \in \sits$ through its length $\abs{\sit}$, meaning that $\process(\sit)=\process(t)$ for any two $\sit,t\in\sits$ such that $\abs{\sit}=\abs{t}$.
A real process~$\selection$ is called a \emph{selection process} if $\selection(\sit) \in \{0,1\}$ for all~$\sit \in \sits$.

With any real process~$\process$, we can associate a \emph{gamble process} $\adddelta \process\colon \sits \to \gambles$, defined by $\adddelta \process(\sit)(x)\coloneqq \process(\sit \, x)-\process(\sit)$ for all~$\sit \in \sits$ and~$x \in \posspace$, and we call it the \emph{process difference} for~$\process$. 
If $\process$ is positive, then we can also consider another gamble process~$\multprocess_\process \colon \sits \to \gambles$, defined by $\multprocess_\process(\sit)(x)\coloneqq \nicefrac{\process(\sit \, x)}{\process(\sit)}$ for all~$\sit \in \sits$ and~$x \in \posspace$, which we call the \emph{multiplier process} for~$\process$.
And vice versa, with every non-negative real gamble process~$D\colon \sits \to \gambles$, we can associate a non-negative real process~$\mint \colon \sits \to \reals$ defined by $\mint(\sit)\coloneqq \prod_{k=0}^{n-1}\multprocess(\xvaltok)(\xvalatkplus)$ for all~$\sit=(\xvaltolong) \in \sits$, and we then say that $\mint$ is \emph{generated by}~$\multprocess$.

When given a forecasting system~$\frcstsystem \in \frcstsystems$, we call a real process~$\supermartin$ a \emph{supermartingale} for~$\frcstsystem$ if for every~$\sit \in \sits$, $\adddelta \supermartin(\sit)$ is an allowable gamble for the corresponding interval forecast~$\frcstsystem(\sit)$, meaning that $\max_{p \in \frcstsystem(\sit)}\ex_p(\adddelta \supermartin(\sit)) \leq 0$.
Moreover, a supermartingale $\test$ is called a \emph{test} supermartingale if it is non-negative and $\test(\init)\coloneqq1$.
We collect all test supermartingales for~$\frcstsystem$ in the set~$\tests{\frcstsystem}$.
It is easy to see that every test supermartingale~$\test$ corresponds to an allowed betting strategy for Sceptic that starts with unit capital and avoids borrowing.
Indeed, for every situation~$\sit=(\xvaltolong) \in \sits$, $\test$ specifies an allowable gamble~$\adddelta \test(\sit)$ for the interval forecast~$\frcstsystem(\sit)\in\intervals$, and Sceptic's running capital $1+\sum_{k=0}^{n-1}\adddelta \test(\xvaltok)(\xvalatkplus)$ equals $\test(\sit)$ and is therefore non-negative, and equals~$1$ in $\init$.

\begin{comment}
We recall that a path~$\pth$ is random for a forecasting system~$\frcstsystem \in \frcstsystems$ if Sceptic can adopt no computably implementable betting strategy that makes her arbitrarily rich along~$\pth$.
This means that we are going to consider computably implementable supermartingales.
The reasons for doing this are twofold.
First, it gives an intuitively understandable meaning to Sceptic's allowable betting strategies; she can only adopt a betting strategy for which she has a finite description.
After all, of what practical use would a betting strategy be if you could not describe it in a finitary manner.
Second, to end up with a notion of randomness for which \propertyref{proper:non-empty} holds,
% is for example random for the precise interval forecast~$\nicefrac{1}{2}$
it suffices to restrict Sceptic's betting strategies to a countable set; and there is only a countably infinite number of computably implementable betting strategies.
\end{comment}

%there is some finite algorithm that, for every~$\sit \in \sits$ and~$x \in \posspace$, allows to approximate the real number~$\multprocess(\sit)(x)$ from below and to arbitrary precision, with the rate of convergence being unknown.

We recall from Section~2 that martingale-theoretic randomness notions differ in the nature of the implementable betting strategies that are available to Sceptic.
More formally, we will consider three different types of implementable test supermartingales: computable ones, lower semicomputable ones, and test supermartingales generated by lower semicomputable multiplier processes.
A test supermartingale~$\test \in \tests{\frcstsystem}$ is called \emph{computable} if there is some finite algorithm that, for every~$\sit \in \sits$ and any $n \in \naturalswithzero$, can compute the real number~$\test(\sit)$ with a precision of $2^{-n}$.
%This means that we are going to consider computably implementable supermartingales.
% and differ in the way how Sceptic should not become arbitrarily rich.
%Therefore, this seems the right place to us for introducing several %countable 
%sets of computably implementable betting strategies, which will then be used in the next section to introduce martingale-theoretic imprecise-probabilistic notions of Martin-Löf ($\ml$), weak Martin-Löf ($\wml$), computable ($\co$) and Schnorr ($\s$) randomness.
%\footnote{Note that weak Martin-Löf randomness is not one of the classical randomness notions that are mentioned in the introduction.
%In fact, this is a notion of randomness that was only recently introduced by de Cooman and De Bock \cite{CoomanBock2021V2,CoomanBock2021}}
%We will need two types of computably implementable betting strategies: \emph{lower semicomputable} and \emph{computable} ones.
A test supermartingale~$\test \in \tests{\frcstsystem}$ is called \emph{lower semicomputable} if there is some finite algorithm that, for every~$\sit \in \sits$, can compute an increasing sequence $(q_n)_{n\in\naturalswithzero}$ of rational numbers that approaches the real number~$\test(\sit)$ from below---but without knowing, for any given $n$, how good the lower bound $q_n$ is.
Similarly, a real multiplier process~$\multprocess$ is called lower semicomputable if there is some finite algorithm that, for every~$\sit \in \sits$ and~$x \in \posspace$, can compute an increasing sequence $(q_n)_{n\in\naturalswithzero}$ of rational numbers that approaches the real number~$\multprocess(\sit)(x)$ from below.
\begin{ECSQARU}
For more details, we refer the reader 
to Appendix A of the extended on-line version \cite{floris2021ecsqaru}.
\end{ECSQARU}
\begin{ArxiveExt}
For more details, we refer the reader 
to Appendix~A.
\end{ArxiveExt}
% As was proved in \cite{CoomanBock2021V2}, $\schnorrtests{\frcstsystem} = \comptests{\frcstsystem} \subseteq \weakmltests{\frcstsystem} \subseteq \mltests{\frcstsystem}$ for all~$\frcstsystem \in \frcstsystems$.

\section{Several notions of (imprecise) randomness} \label{sec:notionsofrandomness}

At this point, we have introduced the necessary mathematical machinery to define our different randomness notions.
We start by introducing four martingale-theoretic ones: Martin-Löf (\ml) randomness, weak Martin-Löf (\wml) randomness, computable (\co) randomness and Schnorr (\s) randomness.
%At this point, we have introduced the necessary material for introducing the martingale-theoretic imprecise-probabilistic counterparts of the most well-known classical randomness notions: Martin-Löf randomness, computable randomness and Schnorr randomness.
Generally speaking, for these notions, a path~$\pth \in \pths$ is random for a forecasting system~$\frcstsystem \in \frcstsystems$ if Sceptic has no implementable allowed betting strategy that makes him arbitrarily rich along~$\pth$.
We stress again that these randomness notions differ in how Sceptic's betting strategies are implementable, and in how he should not be able to become arbitrarily rich along a path~$\pth \in \pths$.
With these types of restrictions in mind, we introduce the following sets of implementable allowed betting strategies.
\begin{center}
\begin{tabular}{r|l}
\(\mltests{\frcstsystem}\) \: &\: all lower semicomputable test supermartingales for~\(\frcstsystem\)\\
\(\weakmltests{\frcstsystem}\) \: &\: all test supermartingales for~\(\frcstsystem\) generated by lower\\
&\: semicomputable multiplier processes \\
\(\comptests{\frcstsystem},\schnorrtests{\frcstsystem}\) \: &\: all computable test supermartingales for~\(\frcstsystem\)
\end{tabular}
\end{center}
For a path~$\pth$ to be Martin-Löf, weak Martin-Löf or computably random, we require that Sceptic's running capital should never be \emph{unbounded} on~$\pth$ for any implementable allowed betting strategy; that is, no test supermartingale~$\test \in \overline{\mathbb{T}}_\random(\frcstsystem)$ should be \emph{unbounded} on~$\pth$, meaning that $\limsup_{n \to \infty}\test(\pthton)=\infty$.
\begin{definition}[{\cite{CoomanBock2021}}] \label{def:notionsofrandomness}
For any~$\random \in \set{\ml,\wml,\co}$, a path~$\pth \in \pths$ is \random-random for a forecasting system~$\frcstsystem \in \frcstsystems$ if no test supermartingale~$\test \in \overline{\mathbb{T}}_\random(\frcstsystem)$ is unbounded on~$\pth$.
\end{definition}

For Schnorr randomness, we require instead that Sceptic's running capital should not be \emph{computably unbounded} on~$\pth$ for any implementable allowed betting strategy.
More formally, we require that no test supermartingale~$\test \in \schnorrtests{\frcstsystem}$ should be \emph{computably unbounded} on~$\pth$.
That~\(\test\) is computably unbounded on~\(\pth\) means that $\limsup_{n \to \infty}[\test(\pthton)-\realgrowth(n)] \geq 0$ for some real map~$\realgrowth\colon\naturalswithzero\to\nonnegreals$ that is
\begin{enumerate}[label=\upshape(\roman*),leftmargin=*,noitemsep,topsep=3pt]
\item computable;
\item non-decreasing, so $\realgrowth(n+1) \geq \realgrowth(n)$ for all~$n \in \naturalswithzero$;
\item unbounded, so $\lim_{n \to \infty}\realgrowth(n)=\infty$.\footnote{Since $\realgrowth$ is non-decreasing, it being unbounded is equivalent to $\lim_{n \to \infty}\realgrowth(n)=\infty$.}
\end{enumerate}
% Now, we will call $\test$ \emph{computably unbounded}  $\pth$ if there is some real growth function $\realgrowth$ such that $\limsup_{n \to \infty}(\test(\pthton)-\realgrowth(n)) \geq 0$.
Since such a \emph{real growth function}~$\realgrowth$ is unbounded, it expresses a (computable) lower bound for the `rate' at which $\test$ increases to infinity along~$\pth$.
Clearly, if $\test \in \schnorrtests{\frcstsystem}$ is computably unbounded on~$\pth \in \pths$, then it is also unbounded on~$\pth$.

\begin{definition}[{\cite{CoomanBock2021}}] \label{def:schnorr}
A path~$\pth \in \pths$ is \s-random for a forecasting system~$\frcstsystem \in \frcstsystems$ if no test supermartingale~$\test \in \schnorrtests{\frcstsystem}$ is computably unbounded on~$\pth$.
\end{definition}

De Cooman and De Bock have proved that these four martingale-theoretic randomness notions satisfy Properties~\ref{prop:monotone} and~\ref{proper:non-empty} \cite[Propositions 9,10,17,18]{CoomanBock2021}.
To describe the relations between these martingale-theoretic imprecise-probabilistic randomness notions, we consider the sets~$\randompths$, with $\random \in \set{\ml,\wml,\co,\s}$; they satisfy the following inclusions \cite[Section~6]{CoomanBock2021}.
\begin{equation*}
\mlpths \ \subseteq \ \weakmlpths \ \subseteq \ \comppths \ \subseteq \ \schnorrpths.
\end{equation*}
Thus, if a path~$\pth \in \pths$ is Martin-Löf random for a forecasting system~$\frcstsystem \in \frcstsystems$, then it is also weakly Martin-Löf, computably and Schnorr random for~$\frcstsystem$.
Consequently, for every forecasting system~$\frcstsystem \in \frcstsystems$, there are at most as many paths that are Martin-Löf random as there are weakly Martin-Löf, computably or Schnorr random paths.
We therefore call Martin-Löf randomness \emph{stronger} than weak Martin-Löf, computable, or Schnorr randomness.
And so, {\itshape mutatis mutandis}, for the other randomness notions.
%Vice versa, Schnorr randomness is a \emph{weaker} notion of randomness than computable, weak Martin-Löf and Martin-Löf randomness.

We also consider two other imprecise-probabilistic randomness notions, which have a more frequentist flavour: Church randomness (\ch) and weak Church randomness (\wch).
Their definition makes use of yet another (but simpler) type of implementable real processes; a selection process $\selection$ is called \emph{recursive} if there is a finite algorithm that, for every $\sit \in \sits$, outputs the binary digit $\selection(\sit) \in \set{0,1}$.
%In order to do so, it will be convenient to have the following terminology and notation at our disposal.
%We call a recursive selection process~$\selection$ \emph{dense} along a path~$\pth \in \pths$ if $\lim_{n \to \infty}\selectionsum=\infty$.
%For every path~$\pth \in \pths$, we collect the corresponding recursive dense selection processes in the set~$\selectionsdense$.
%Similarly, for every path~$\pth \in \pths$, we collect the corresponding recursive dense temporal selection processes in the set~$\selectionsfdense$.
\begin{definition}[{\cite{CoomanBock2021}}] \label{def:churchrandom}
A path~$\pth \in \pths$ is \ch-random \emph{(}\wch-random\emph{)} for a forecasting system~$\frcstsystem \in \frcstsystems$ if for every recursive \emph{(}temporal\emph{)} selection process~$\selection$ for which $\lim_{n \to \infty}\selectionsum=\infty$, it holds that
% for which $\lim_{n \to \infty}\sum_{k=0}^{n-1}\selection(\pthtok)=\infty$ :
\begin{align*}
\liminf_{n \to \infty} \frac{\sum_{k=0}^{n-1}\selection(\pthtok)[\pthatkplus-\underline{\frcstsystem}(\pthtok)]}{\sum_{k=0}^{n-1}\selection(\pthtok)} \geq 0
\shortintertext{and}
\limsup_{n \to \infty} \frac{\sum_{k=0}^{n-1}\selection(\pthtok)[\pthatkplus-\overline{\frcstsystem}(\pthtok)]}{\sum_{k=0}^{n-1}\selection(\pthtok)} \leq 0.
\end{align*}
\end{definition}
\noindent For a stationary forecasting system~$I \in \intervals$, the conditions in these definitions simplify to the perhaps more intuitive requirement that
\begin{equation*}
\min I \leq 
\liminf_{n \to \infty} \frac{\sum_{k=0}^{n-1}\selection(\pthtok)\pthatkplus}{\sum_{k=0}^{n-1}\selection(\pthtok)} \leq
\limsup_{n \to \infty} \frac{\sum_{k=0}^{n-1}\selection(\pthtok)\pthatkplus}{\sum_{k=0}^{n-1}\selection(\pthtok)} \leq
\max I.
\end{equation*}

%To describe the relation between these two frequentist flavoured randomness notions, we consider the sets~$\churchpths$ and~$\weakchurchpths$. %which contain for every path~$\pth \in \pths$ all forecasting systems for which $\pth$ is Church and weakly Church random, respectively.

It is easy to see that these two randomness notions also satisfy Properties~\ref{prop:monotone} and~\ref{proper:non-empty}.
Since the notion of weak Church randomness considers fewer selection processes than Church randomness does, it is clear that if a path~$\pth~\in~\pths$ is Church random for a forecasting system~$\frcstsystem \in \frcstsystems$, then it is also weakly Church random for~$\frcstsystem$.
Hence, $\churchpths \subseteq \weakchurchpths$.
For computable forecasting systems, we can also relate these two `frequentist flavoured' notions with the martingale-theoretic notions considered before \cite[Sections 6 and 7]{CoomanBock2021}: for every computable forecasting system~$\frcstsystem \in \frcstsystems$,
\begin{equation} \label{eq:inclusions:comp}
\mlpths \ \subseteq \ \weakmlpths \ \subseteq \ \comppths \
\begin{array}{l}
\vspace{3pt} \raisebox{-3 pt}{\rotatebox[origin=c]{20}{$\subseteq$}} \ \ \churchpths \ \ \raisebox{-3 pt}{\rotatebox[origin=c]{-20}{$\subseteq$}} \\
\raisebox{3 pt}{\rotatebox[origin=c]{-20}{$\subseteq$}} \ \ \schnorrpths \phantom{\s} \ \ \raisebox{3 pt}{\rotatebox[origin=c]{20}{$\subseteq$}}
\end{array}
\ \weakchurchpths.
\end{equation}

\section{Smallest interval forecasts and randomness} \label{sec:question1}

From now on, we will focus on stationary forecasting systems and investigate the differences and similarities between the six randomness notions we consider. 
We start by studying if there is a smallest interval forecast for which a path is (almost) random.
To this end, we first compare the sets~$\intervals_\random(\pth)$, with $\random \in \set{\ml,\wml,\co,\s,\ch,\wch}$.
%which contain for every path~$\pth \in \pths$ all interval forecasts~$I \in \intervals$ (thus both the computable and non-computable ones) for which $\pth$ is \random-random, with $\random \in \set{\ml,\wml,\co,\s,\ch,\wch}$.
They satisfy similar relations as the sets~$\randompths$---but without a need for computability assumptions.
\begin{proposition}[{\cite[Section~8]{CoomanBock2021}}] \label{prop:relations:interval}
For every path~$\pth \in \pths$, it holds that
\begin{equation*}
\mlintervals \ \subseteq \ \weakmlintervals \ \subseteq \ \compintervals \
\begin{array}{l}
\vspace{3pt} \raisebox{-3 pt}{\rotatebox[origin=c]{20}{$\subseteq$}} \ \ \churchintervals \ \ \raisebox{-3 pt}{\rotatebox[origin=c]{-20}{$\subseteq$}} \\
\raisebox{3 pt}{\rotatebox[origin=c]{-20}{$\subseteq$}} \ \ \schnorrintervals \phantom{\s} \ \ \raisebox{3 pt}{\rotatebox[origin=c]{20}{$\subseteq$}}
\end{array}
\ \weakchurchintervals.
\end{equation*}
\end{proposition}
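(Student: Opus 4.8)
The plan is to transfer the statement to the path sets $\pths_\random(I)$, using that $\randomintervals$ is by definition the set of interval forecasts $I\in\intervals$ for which $\pth\in\pths_\random(I)$, with $I$ identified with the corresponding stationary forecasting system. Under this translation, the chain $\mlintervals\subseteq\weakmlintervals\subseteq\compintervals\subseteq\schnorrintervals$ and the inclusion $\churchintervals\subseteq\weakchurchintervals$ are immediate from the path-set inclusions $\mlpths\subseteq\weakmlpths\subseteq\comppths\subseteq\schnorrpths$ and $\churchpths\subseteq\weakchurchpths$ recalled in Section~\ref{sec:notionsofrandomness}, which---and this is the relevant point---hold for \emph{every} forecasting system $\frcstsystem$, hence in particular for every stationary one.

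The substance lies in the two `diagonal' inclusions $\compintervals\subseteq\churchintervals$ and $\schnorrintervals\subseteq\weakchurchintervals$. One cannot simply invoke \equationref{eq:inclusions:comp} here, because that diagram is only available for \emph{computable} forecasting systems, whereas a stationary forecasting system $I=\interval$ may perfectly well have non-computable endpoints; dealing with this is where the work is. I would argue by contraposition: assume $\pth$ is not \ch-random (resp.\ not \wch-random) for $I$, fix a recursive (resp.\ recursive temporal) selection process $\selection$ with $\lim_{n\to\infty}\selectionsum=\infty$ witnessing this, and construct from it a \emph{computable} test supermartingale for $I$ that is unbounded (resp.\ computably unbounded) on $\pth$---so that $\pth$ is not \co-random (resp.\ not \s-random) for $I$.

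By the symmetry of the two conditions in Definition~\ref{def:churchrandom} (swap the roles of $0$ and $1$), assume the first one fails: letting $o_n\coloneqq\sum_{k=0}^{n-1}\selection(\pthtok)\pthatkplus$ count the ones among the first $\selectionsum$ selected positions, there is a rational $\delta>0$ with $o_n/\selectionsum<\min I-\delta$ for infinitely many $n$ (whence $\min I>\delta>0$). Pick a rational $q$ with $\min I-\delta<q\le\min I$ and $q<1$, and a rational $\alpha\in(0,1)$, and let $\test$ be the test supermartingale with $\test(\init)=1$ that leaves its capital unchanged at every $\sit$ with $\selection(\sit)=0$ and at every $\sit$ with $\selection(\sit)=1$ makes the bet on outcome $0$ with relative stake $\alpha$ that is fair for probability $q$, i.e.\ $\adddelta\test(\sit)(1)=-\alpha\,\test(\sit)$ and $\adddelta\test(\sit)(0)=\tfrac{q\alpha}{1-q}\,\test(\sit)$. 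Then $\test$ is computable ($\selection$ is recursive and $q,\alpha\in\rationals$), and it is a supermartingale for $I$ because $p\mapsto\ex_p(\adddelta\test(\sit))$ is decreasing and vanishes at $q\le\min I$, so $\max_{p\in I}\ex_p(\adddelta\test(\sit))=\ex_{\min I}(\adddelta\test(\sit))\le0$. Writing $z_n\coloneqq\selectionsum-o_n$, one has, along the witnessing subsequence, $\log\test(\pthton)=o_n\log(1-\alpha)+z_n\log\bigl(1+\tfrac{q\alpha}{1-q}\bigr)>\selectionsum\cdot B(\alpha)$, where $B(\alpha)\coloneqq(\min I-\delta)\log(1-\alpha)+(1-\min I+\delta)\log\bigl(1+\tfrac{q\alpha}{1-q}\bigr)$ satisfies $B(0)=0$ and has a strictly positive derivative at $0$ exactly because $q>\min I-\delta$. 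Fixing $\alpha$ small enough that $B(\alpha)>0$ and using $\selectionsum\to\infty$, we get $\test(\pthton)\to\infty$ along that subsequence, so $\test$ is unbounded on $\pth$; this yields $\compintervals\subseteq\churchintervals$. (If instead the second condition fails, one bets on outcome $1$ at rational odds just above $\max I$, entirely symmetrically.)

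The same construction gives $\schnorrintervals\subseteq\weakchurchintervals$: now $\selection$ is recursive \emph{and} temporal, so $\realgrowth(n)\coloneqq\selectionsum$ is a computable, non-decreasing, unbounded map $\naturalswithzero\to\nonnegreals$ (temporality is what makes $\selectionsum$ computable from $n$ alone), and choosing in addition a rational $\epsilon$ with $0<\epsilon<B(\alpha)$ the estimate above yields $\test(\pthton)>e^{\epsilon\,\realgrowth(n)}\ge\realgrowth(n)$ for all large $n$ in the witnessing subsequence, hence $\limsup_{n\to\infty}[\test(\pthton)-\realgrowth(n)]\ge0$ and $\test$ is computably unbounded on $\pth$. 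The obstacle I expect to matter most, in both diagonal inclusions, is exactly the absence of a computability assumption on $I$; it is circumvented by betting at \emph{rational} odds $q$ wedged strictly between the (possibly non-computable) endpoint of $I$ and the empirical frequency that overshoots it, so that the bet is simultaneously allowed by $I$ and winning along $\pth$.
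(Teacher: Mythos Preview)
The paper does not supply its own proof of this proposition; it is quoted from \cite[Section~8]{CoomanBock2021} as an established result, so there is no in-paper argument to compare your attempt against. Your proof is nonetheless correct. You are right that the linear chain and the inclusion $\churchintervals\subseteq\weakchurchintervals$ are immediate from the unrestricted path-set inclusions of Section~\ref{sec:notionsofrandomness} (which hold for arbitrary~$\frcstsystem$), and you correctly isolate the substantive step: the diagonals $\compintervals\subseteq\churchintervals$ and $\schnorrintervals\subseteq\weakchurchintervals$ cannot simply be read off \equationref{eq:inclusions:comp} because a stationary~$I$ need not be computable. Your construction---bet a fixed rational fraction~$\alpha$ of current capital against the over-represented outcome at rational odds~$q$ wedged between the interval endpoint and the deviant empirical frequency---is the standard way to convert a frequency violation into (computable) martingale unboundedness; the choice of a \emph{rational}~$q$ with $\min I-\delta<q\le\min I$ is precisely what keeps the process recursive while ensuring the bet is allowable for~$I$, and the derivative computation $B'(0)=(q-(\min I-\delta))/(1-q)>0$ is correct. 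The Schnorr refinement via $\realgrowth(n)=\selectionsum$ is also sound: temporality of~$\selection$ is exactly what makes this count computable from~$n$ alone, so that $\realgrowth$ is a legitimate real growth function witnessing computable unboundedness.
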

\noindent Similarly to before, if a path~$\pth \in \pths$ is Martin-Löf random for an interval forecast~$I \in \intervals$, then it is also weakly Martin-Löf, computably, Schnorr and (weakly) Church random for~$I$.
Observe that for our weakest notion of randomness, \definitionref{def:churchrandom}---with $\selection=1$---guarantees that all interval forecasts~$I \in \weakchurchintervals$ satisfy \propertyref{prop:law:large:numbers}, and therefore, by \propositionref{prop:relations:interval}, all six randomness notions that we are considering here satisfy \propertyref{prop:law:large:numbers}.
%By \definitionref{def:churchrandom}, all interval forecast~$I \in \weakchurchintervals$ for which a path~$\pth \in \pths$ is weakly Church random satisfy \propertyref{prop:law:large:numbers}, and therefore, by \propositionref{prop:relations:interval}, all six randomness notions satisfy \propertyref{prop:law:large:numbers}.
Since the sets~$\randomintervals$ are also non-empty by \propertyref{proper:non-empty}, the interval forecasts~$\randomI$ are well-defined and non-empty for all~$\random \in \set{\ml,\wml,\co,\s,\ch,\wch}$.
Moreover, since the sets~$\mathcal{I}_\random(\pth)$ satisfy the relations in \propositionref{prop:relations:interval}, their intersections $I_\random(\pth)$ satisfy the following inverse relations.
\begin{corollary} \label{cor:relations:interval}
For every path~$\pth \in \pths$, it holds that
\begin{equation*}
\weakchurchI \
\begin{array}{l}
\vspace{3pt} \raisebox{-3 pt}{\rotatebox[origin=c]{20}{$\subseteq$}} \ \ \churchI \ \ \raisebox{-3 pt}{\rotatebox[origin=c]{-20}{$\subseteq$}} \\
\raisebox{3 pt}{\rotatebox[origin=c]{-20}{$\subseteq$}} \ \ \schnorrI \phantom{\s} \ \ \raisebox{3 pt}{\rotatebox[origin=c]{20}{$\subseteq$}}
\end{array}
\ \compI \ \subseteq \ \weakmlI \ \subseteq \ \mlI.
\end{equation*}
\end{corollary}
% For the notions of \wml-, \co- and \s-randomness, De Cooman and de Bock proved that the interval forecast~$\randomI$  almost completely characterise the interval  \cite{CoomanBock2021}
\noindent For Church and weak Church randomness, it holds that every path~$\pth \in \pths$ is in fact Church and weakly Church random, respectively, for the interval forecasts~$\churchI$ and~$\weakchurchI$.
\begin{proposition} \label{prop:churches}
Consider any~$\random \in \set{\ch,\wch}$ and any path~$\pth \in \pths$.
Then $\randomI$ is the smallest interval forecast that $\pth$ is \random-random for.
%$\emptyset \neq \weakchurchI \subseteq \churchI$, $\pth$ is Church random for an interval forecast~$I \in \intervals$ if and only if $\intervals_{\crosssymbol}(\pth) \subseteq I$, and~$\pth$ is weakly Church random for~$I$ if and only if $\weakchurchI \subseteq I$. Then
\begin{comment}
\begin{enumerate}[label=\upshape(\roman*),leftmargin=*,noitemsep,topsep=0pt]
\item $\emptyset \neq \weakchurchI \subseteq \churchI$; \label{prop:churches:i}
\item $\pth$ is completely \ch-random for the interval forecast~$\churchI$.
%$\bigg[\inf_{\selection \in \selectionsdense} \liminf_{n \to \infty} \frac{\sum_{k=0}^{n-1}\selection(\pthtok)\pthatkplus}{\sum_{k=0}^{n-1}\selection(\pthtok)},\sup_{\selection \in \selectionsdense} \limsup_{n \to \infty} \frac{\sum_{k=0}^{n-1}\selection(\pthtok)\pthatkplus}{\sum_{k=0}^{n-1}\selection(\pthtok)}\bigg]$;
\label{prop:churches:ii}
\item $\pth$ is completely \wch-random for the interval forecast~$\weakchurchI$. \label{prop:churches:iii}
%$\bigg[\inf_{\selection \in \selectionsfdense} \liminf_{n \to \infty} \frac{\sum_{k=0}^{n-1}\selection(\pthtok)\pthatkplus}{\sum_{k=0}^{n-1}\selection(\pthtok)},\sup_{\selection \in \selectionsfdense} \limsup_{n \to \infty} \frac{\sum_{k=0}^{n-1}\selection(\pthtok)\pthatkplus}{\sum_{k=0}^{n-1}\selection(\pthtok)}\bigg]$;
\end{enumerate}
\end{comment}
\end{proposition}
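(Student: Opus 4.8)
The plan is to characterise the sets $\churchintervals$ and $\weakchurchintervals$ explicitly, after which the statement reads off at once. I treat $\random=\ch$; the argument for $\random=\wch$ is word for word the same, with ``recursive selection process'' replaced throughout by ``temporal recursive selection process'' (and note that the constant selection process $\selection\equiv1$, used below, is temporal). Fix $\pth\in\pths$, and let $\selections^{\ast}$ denote the set of recursive selection processes $\selection$ with $\lim_{n\to\infty}\selectionsum=\infty$; this set is non-empty since $\selection\equiv1$ belongs to it. For $\selection\in\selections^{\ast}$, write $\underline{r}_{\selection}$ and $\overline{r}_{\selection}$ for, respectively, the $\liminf$ and the $\limsup$ as $n\to\infty$ of the relative frequency $\frac{\sum_{k=0}^{n-1}\selection(\pthtok)\pthatkplus}{\selectionsum}$ along $\pth$ — that is, the quantities appearing in \definitionref{def:churchrandom} specialised to a stationary forecasting system. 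Each such quotient is a weighted average of values in $\set{0,1}$, so $0\le\underline{r}_{\selection}\le\overline{r}_{\selection}\le1$. Finally set $\underline{r}\coloneqq\inf_{\selection\in\selections^{\ast}}\underline{r}_{\selection}$ and $\overline{r}\coloneqq\sup_{\selection\in\selections^{\ast}}\overline{r}_{\selection}$. Taking $\selection\equiv1$ gives $\underline{r}\le\liminf_{n\to\infty}\frac1n\sum_{k=1}^{n}\pthatk\le\limsup_{n\to\infty}\frac1n\sum_{k=1}^{n}\pthatk\le\overline{r}$, so in particular $0\le\underline{r}\le\overline{r}\le1$ and hence $\sqgroup{\underline{r},\overline{r}}\in\intervals$.

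With this notation I claim $\churchintervals=\set{\sqgroup{\underline{p},\overline{p}}\in\intervals : \underline{p}\le\underline{r}\text{ and }\overline{r}\le\overline{p}}$. For ``$\subseteq$'': if $\pth$ is $\ch$-random for $\sqgroup{\underline{p},\overline{p}}$, then \definitionref{def:churchrandom} gives $\underline{p}\le\underline{r}_{\selection}$ and $\overline{r}_{\selection}\le\overline{p}$ for every $\selection\in\selections^{\ast}$, and taking the infimum over the first family and the supremum over the second yields $\underline{p}\le\underline{r}$ and $\overline{r}\le\overline{p}$. For ``$\supseteq$'': if $\underline{p}\le\underline{r}$ and $\overline{r}\le\overline{p}$, then for every $\selection\in\selections^{\ast}$ we have $\underline{p}\le\underline{r}\le\underline{r}_{\selection}$ and $\overline{r}_{\selection}\le\overline{r}\le\overline{p}$, which are precisely the two inequalities demanded by \definitionref{def:churchrandom}, so $\pth$ is $\ch$-random for $\sqgroup{\underline{p},\overline{p}}$. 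Intersecting the members of this family, and using that $\sqgroup{\underline{r},\overline{r}}$ is itself one of them, gives $\churchI=\bigcap\churchintervals=\sqgroup{\underline{r},\overline{r}}$.

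It then remains only to observe that $\churchI=\sqgroup{\underline{r},\overline{r}}$ lies in $\churchintervals$: it trivially satisfies $\underline{r}\le\underline{r}$ and $\overline{r}\le\overline{r}$, so by the ``$\supseteq$'' direction just proved, $\pth$ is $\ch$-random for $\churchI$. And $\churchI$ is the smallest such interval, since whenever $\pth$ is $\ch$-random for some $I\in\intervals$ we have $I\in\churchintervals$ and hence $\churchI=\bigcap\churchintervals\subseteq I$. I do not expect any real difficulty here; the only point requiring care is the bookkeeping that turns the family of universally quantified inequalities in \definitionref{def:churchrandom} into the two scalar thresholds $\underline{r}$ and $\overline{r}$ (where one must note that no attainment of the $\inf$/$\sup$ is needed, only that they are a lower resp.\ upper bound), together with the verification that $\sqgroup{\underline{r},\overline{r}}$ is a genuine element of $\intervals$ — and for the latter, the admissibility of $\selection\equiv1$ (equivalently, \propertyref{prop:law:large:numbers}) is exactly what does the work.
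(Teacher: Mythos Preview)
Your proof is correct and follows essentially the same approach as the paper: both define the thresholds $\underline{r}=\inf_{\selection}\underline{r}_{\selection}$ and $\overline{r}=\sup_{\selection}\overline{r}_{\selection}$ over the relevant class of dense recursive (temporal) selection processes, verify that $\pth$ is \random-random for $\sqgroup{\underline{r},\overline{r}}$ directly from \definitionref{def:churchrandom}, and show that any $I\in\randomintervals$ must contain $\sqgroup{\underline{r},\overline{r}}$ by the same definition. Your explicit characterisation of $\churchintervals$ and your verification via $\selection\equiv1$ that $\underline{r}\le\overline{r}$ are slightly more careful than the paper's presentation, but the underlying argument is identical.
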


A similar result need not hold for the other four types of randomness we are considering here.
As an illustrative example, consider the non-stationary but temporal precise forecasting system~$\frcstsystem_{\sim \nicefrac{1}{2}}$ defined, for all~$\sit \in \sits$, by
\begin{equation*} \label{eq:main:one}
\frcstsystem_{\sim \nicefrac{1}{2}}(\sit) \coloneqq \frac{1}{2}+(-1)^{\abs{\sit}}
\delta(\abs{\sit}),\textrm{ with }
\delta(n)\coloneqq e^{-\frac{1}{n+1}}\sqrt{e^{\frac{1}{n+1}}-1} \textrm{ for all } n \in \naturalswithzero.
\end{equation*}
It has been proved that if a path~$\pth \in \pths$ is computably random for~$\frcstsystem_{\sim \nicefrac{1}{2}}$, then $\pth$ is Church random and almost computably random for the stationary precise model $\nicefrac{1}{2}$, whilst not being computably random for~$\nicefrac{1}{2}$ \cite{CoomanBock2017}.
%Hence, in this case, the smallest intervals for which $\pth$ is Church random and (almost) computably random both equal $\nicefrac{1}{2}$, and therefore coincide.

While in general $\randomI$ may not be the smallest interval forecast that a path~$\pth \in \pths$ is \random-random for, De Cooman and De Bock have effectively proved that for $\random \in \set{\wml,\co,\s}$, every path~$\pth \in \pths$ is almost \random-random for~$\randomI$, essentially because the corresponding sets~\(\intervals_\random(\pth)\) are then closed under finite intersections.

\begin{proposition}[{\cite[Section~8]{CoomanBock2021}}] \label{prop:almost:random}
Consider any~$\random \in \set{\wml,\co,\s}$ and any path~$\pth \in \pths$.
Then $\randomI$ is the smallest interval forecast for which $\pth$ is almost \random-random.
\end{proposition}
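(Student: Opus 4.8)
The plan is to reduce the statement to one lemma: for $\random\in\set{\wml,\co,\s}$, the family $\randomintervals$ is closed under non-empty finite intersections, i.e.\ if $\pth$ is $\random$-random for the stationary forecasting systems $I_1,I_2\in\intervals$ and $I_1\cap I_2\neq\emptyset$, then $\pth$ is $\random$-random for $I_1\cap I_2$. Once this is available, the proposition will follow by bookkeeping with interval endpoints together with \propertyref{prop:monotone}, using that $\randomintervals$ is non-empty (\propertyref{proper:non-empty}) and that $\randomI$ is a non-empty interval forecast (which follows from \propertyref{prop:law:large:numbers}; see the discussion in the text). Write $\randomI=\sqgroup{\underline{r},\overline{r}}$. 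Recall that "$\randomI$ is the smallest interval forecast $\pth$ is almost $\random$-random for" means two things: that $\pth$ is almost $\random$-random for $\randomI$, and that every interval forecast $\pth$ is almost $\random$-random for includes $\randomI$.

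Assuming the lemma, I would argue as follows. An intersection of closed intervals is the closed interval whose left endpoint is the supremum of the left endpoints and whose right endpoint is the infimum of the right endpoints, so $\underline{r}=\sup\set{\min I\colon I\in\randomintervals}$ and $\overline{r}=\inf\set{\max I\colon I\in\randomintervals}$. Fix $\epsilon_1,\epsilon_2>0$ and pick $I_1,I_2\in\randomintervals$ with $\min I_1>\underline{r}-\epsilon_1$ and $\max I_2<\overline{r}+\epsilon_2$. Since $\randomI\subseteq I_1\cap I_2$, this intersection is non-empty, so by the lemma $I_1\cap I_2\in\randomintervals$, and from $\min(I_1\cap I_2)\geq\min I_1>\underline{r}-\epsilon_1$ and $\max(I_1\cap I_2)\leq\max I_2<\overline{r}+\epsilon_2$ it follows that $I_1\cap I_2\subseteq\sqgroup{\underline{r}-\epsilon_1,\overline{r}+\epsilon_2}\cap\sqgroup{0,1}$. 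By \propertyref{prop:monotone}, $\pth$ is then $\random$-random for $\sqgroup{\underline{r}-\epsilon_1,\overline{r}+\epsilon_2}\cap\sqgroup{0,1}$; since $\epsilon_1,\epsilon_2>0$ were arbitrary, \definitionref{def:almost} gives that $\pth$ is almost $\random$-random for $\randomI$. For minimality, if $\pth$ is almost $\random$-random for some $J\in\intervals$, then $\sqgroup{\min J-\epsilon_1,\max J+\epsilon_2}\cap\sqgroup{0,1}\in\randomintervals$ for all $\epsilon_1,\epsilon_2>0$, whence $\randomI=\bigcap\randomintervals\subseteq\bigcap_{\epsilon_1,\epsilon_2>0}\bigl(\sqgroup{\min J-\epsilon_1,\max J+\epsilon_2}\cap\sqgroup{0,1}\bigr)=J$.

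For the lemma I would use a multiplicative splitting of test supermartingales. Writing $I_1=\sqgroup{\underline{i_1},\overline{i_1}}$ and $I_2=\sqgroup{\underline{i_2},\overline{i_2}}$, if one of the intervals contains the other then $I_1\cap I_2$ equals $I_1$ or $I_2$ and there is nothing to do, so assume (up to swapping the roles of $I_1$ and $I_2$) that $\underline{i_2}<\underline{i_1}\leq\overline{i_2}<\overline{i_1}$, so that $I_3\coloneqq I_1\cap I_2=\sqgroup{\underline{i_1},\overline{i_2}}$ shares its left endpoint with $I_1$ and its right endpoint with $I_2$. Supposing $\pth$ is not $\random$-random for $I_3$, take an implementable (of the type appropriate to $\random$) test supermartingale $\test$ for $I_3$ that is (computably, in the Schnorr case) unbounded on $\pth$; after a harmless normalisation making the multiplier process $\multprocess(\sit)(x)\coloneqq\test(\sit x)/\test(\sit)$ well defined, $\multprocess(\sit)$ satisfies $\ex_p(\multprocess(\sit))\leq1$ for all $p\in I_3$, which by affinity of $\ex_p$ in $p$ amounts to the two endpoint inequalities at $\underline{i_1}$ and $\overline{i_2}$. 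The crucial point is that, because $\underline{i_1}$ is an endpoint of $I_1$ and $\overline{i_2}$ one of $I_2$: whenever $\multprocess(\sit)$ "bets on $0$" ($\multprocess(\sit)(1)\leq\multprocess(\sit)(0)$) it already satisfies $\ex_p(\multprocess(\sit))\leq1$ on all of $I_1$, and whenever it "bets on $1$" it does so on all of $I_2$. So I would define $\multprocess_1,\multprocess_2$ by keeping $\multprocess(\sit)$ in $\multprocess_1$ and putting $\multprocess_2(\sit)\equiv1$ in the first case, and the other way round in the second; the generated processes $\test_1,\test_2$ are then non-negative test supermartingales for $I_1$ and $I_2$ with $\test_1(\pthton)\test_2(\pthton)=\test(\pthton)$ for all $n$, so at least one of them is (computably, now via a square-root growth function) unbounded on $\pth$, contradicting that $\pth$ is $\random$-random for both $I_1$ and $I_2$.

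I expect the main obstacle to be carrying out this split \emph{effectively}, so that $\test_1$ and $\test_2$ really are implementable of the required type: deciding the sign of $\multprocess(\sit)(1)-\multprocess(\sit)(0)$ is not decidable (nor, for weak \ML\ randomness, semi-decidable) when the two values coincide, so the naive $\multprocess_1,\multprocess_2$ above need not be computable (respectively lower semicomputable) even when $\multprocess$ is. The resolution — which is what makes the lemma work, and is carried out in \cite[Section~8]{CoomanBock2021} — uses that a tie forces $\multprocess(\sit)$ to be a constant $\leq1$, which is allowable for every interval forecast and harmless to the growth of $\test$, so such situations can be absorbed into either factor, the choice being made effective by a shrinking tolerance on the computed approximations of $\multprocess(\sit)(1)-\multprocess(\sit)(0)$. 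Everything else — the endpoint bookkeeping and the invocations of \propertyref{prop:monotone} and \definitionref{def:almost} — is routine.
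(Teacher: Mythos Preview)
Your proposal is correct and matches the approach the paper attributes to \cite[Section~8]{CoomanBock2021}: the paper does not prove this proposition itself but cites it, summarising the argument as ``essentially because the corresponding sets~\(\intervals_\random(\pth)\) are then closed under finite intersections'', which is precisely your reduction lemma. Your endpoint bookkeeping, the multiplicative splitting of the test supermartingale according to whether \(\multprocess(\sit)(1)\lessgtr\multprocess(\sit)(0)\), the square-root growth function for the Schnorr case, and your identification of the effectiveness obstacle (and its resolution via the tie case being a harmless constant~\(\leq1\)) all track the cited argument faithfully.
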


It should be noted that there is no mention of Martin-Löf randomness in Propositions~\ref{prop:churches} and~\ref{prop:almost:random}.
%we did not introduce a similar result for Martin-Löf randomness.
Indeed, it is an open problem whether every path~$\pth \in \pths$ is (almost) \ml-random for the interval forecast~$\mlI$.
We can however provide a partial answer by focusing on paths~$\pth \in \pths$ that are \ml-random for a computable precise forecasting system~$\frcstsystem \in \frcstsystems$.
\begin{proposition} \label{prop:ml:almost:completely}
If a path~$\pth \in \pths$ is \ml-random for a computable precise forecasting system~$\frcstsystem \in \frcstsystems$, then $\mlI$ is the smallest interval forecast for which $\pth$ is almost \ml-random.
\end{proposition}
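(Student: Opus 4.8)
The plan is to pin down~$\mlI$ exactly. Write~$p\coloneqq\lfrcstsystem=\ufrcstsystem$ for the computable process underlying the precise forecasting system~$\frcstsystem$, and put~$\underline{p}^\ast\coloneqq\liminf_{k\to\infty}p(\pthtok)$ and~$\overline{p}^\ast\coloneqq\limsup_{k\to\infty}p(\pthtok)$. I would prove that~$\mlI=\sqgroup{\underline{p}^\ast,\overline{p}^\ast}$ and that~$\pth$ is almost \ml-random for this interval; the statement then follows at once, using in addition the elementary fact---valid for \emph{any} notion of randomness satisfying \propertyref{prop:monotone}---that if~$\pth$ is almost \ml-random for an interval forecast~$I$, then $\sqgroup{\min I-\epsilon_1,\max I+\epsilon_2}\cap\sqgroup{0,1}\in\mlintervals$ for all~$\epsilon_1,\epsilon_2>0$, so that~$\mlI$ is contained in every such interval and hence in~$I$.

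For the inclusion $\sqgroup{\underline{p}^\ast,\overline{p}^\ast}\subseteq\mlI$, fix any~$I\in\mlintervals$; I claim $\sqgroup{\underline{p}^\ast,\overline{p}^\ast}\subseteq I$. Since~$\pth$ is \ml-random for the computable~$\frcstsystem$, it is \ch-random for~$\frcstsystem$ by \equationref{eq:inclusions:comp}, and since it is \ml-random for the stationary~$I$, it is \ch-random for~$I$ by \propositionref{prop:relations:interval}. For every rational~$c>\underline{p}^\ast$, let~$\selection_c$ be the recursive selection process that selects a situation~$\sit$ exactly when a computable rational approximation~$\tilde p(\sit)$ of~$p(\sit)$ to within~$2^{-\abs{\sit}}$ satisfies~$\tilde p(\sit)<c$. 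Because $\liminf_{k\to\infty}p(\pthtok)=\underline{p}^\ast<c$, this~$\selection_c$ selects infinitely many prefixes of~$\pth$, and on each selected situation~$p$ stays below~$c+2^{-\abs{\sit}}$, so that $\limsup_{n\to\infty}\frac{\sum_{k=0}^{n-1}\selection_c(\pthtok)p(\pthtok)}{\sum_{k=0}^{n-1}\selection_c(\pthtok)}\leq c$. \ch-randomness for the precise~$\frcstsystem$ forces the averaged outcomes along~$\selection_c$ to have the same limit behaviour, and \ch-randomness for~$I$ then yields $\min I\leq c$. Letting~$c$ decrease to~$\underline{p}^\ast$ over the rationals gives $\min I\leq\underline{p}^\ast$; a symmetric argument with selection processes picking large-forecast situations gives $\max I\geq\overline{p}^\ast$. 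Hence $\sqgroup{\underline{p}^\ast,\overline{p}^\ast}\subseteq I$, as claimed.

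For the converse, I would show that~$\pth$ is \ml-random for $\sqgroup{\underline{p}^\ast-\epsilon_1,\overline{p}^\ast+\epsilon_2}\cap\sqgroup{0,1}$ for all~$\epsilon_1,\epsilon_2>0$; by the elementary fact above this gives both $\mlI\subseteq\sqgroup{\underline{p}^\ast,\overline{p}^\ast}$ and the `almost' claim. By \propertyref{prop:monotone} it suffices to treat a \emph{rational} subinterval~$\sqgroup{a,b}\subseteq\sqgroup{\underline{p}^\ast-\epsilon_1,\overline{p}^\ast+\epsilon_2}\cap\sqgroup{0,1}$ chosen with~$a<\underline{p}^\ast\leq\overline{p}^\ast<b$ (up to the degenerate cases~$\underline{p}^\ast=0$ or~$\overline{p}^\ast=1$, handled similarly), so that~$p(\pthtok)\in\sqgroup{a,b}$ for all but finitely many~$k$. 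Suppose for contradiction that some~$\test\in\mltests{\sqgroup{a,b}}$ is unbounded on~$\pth$; replacing~$\test$ by~$\tfrac{1}{2}(\test+1)$ we may assume~$\test>0$, so~$\test$ has a lower semicomputable multiplier process~$\multprocess$. Using the computability of~$p$, fix a recursive set~$A\subseteq\sits$---the deep enough situations~$\sit$ whose approximation~$\tilde p(\sit)$ lies safely inside~$\sqgroup{a,b}$---so that~$p(\sit)\in\sqgroup{a,b}$ for every~$\sit\in A$, while every sufficiently long prefix of~$\pth$ belongs to~$A$. Let~$\multprocess'(\sit)\coloneqq\multprocess(\sit)$ for~$\sit\in A$ and~$\multprocess'(\sit)\coloneqq1$ otherwise, and let~$\test'$ be the test supermartingale generated by~$\multprocess'$. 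Then~$\test'\in\mltests{\frcstsystem}$: it is lower semicomputable because~$A$ is recursive, non-negative because~$\multprocess'\geq0$, and a supermartingale for~$\frcstsystem$ at every situation (on~$A$ because~$p(\sit)\in\sqgroup{a,b}$, off~$A$ trivially). Along~$\pth$ it differs from~$\test$ only by a positive multiplicative constant---the two agree at every prefix beyond the last one outside~$A$, and~$\test>0$ on~$\pth$ since~$\test$ is unbounded there---so~$\test'$ is unbounded on~$\pth$, contradicting the \ml-randomness of~$\pth$ for~$\frcstsystem$. Hence~$\pth$ is \ml-random for~$\sqgroup{a,b}$, and therefore for $\sqgroup{\underline{p}^\ast-\epsilon_1,\overline{p}^\ast+\epsilon_2}\cap\sqgroup{0,1}$.

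The delicate step is the last paragraph: $\underline{p}^\ast$ and~$\overline{p}^\ast$ need not be computable, so one cannot simply freeze the martingale on~$\set{\sit\colon p(\sit)\in\sqgroup{\underline{p}^\ast,\overline{p}^\ast}}$. The real work is the choice of the rational subinterval~$\sqgroup{a,b}$ and its padding, arranged so that (i)~membership of~$A$ is decidable from the rational approximations of~$p$, (ii)~the bets inherited from~$\test$ remain admissible for~$\frcstsystem$ on~$A$, and (iii)~only finitely many prefixes of~$\pth$ fall outside~$A$. Everything else is the monotonicity and law-of-large-numbers bookkeeping already in place in Sections~2--4.
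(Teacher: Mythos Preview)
Your overall strategy matches the paper's: identify~$\mlI$ with~$\frcstI=\sqgroup{\underline{p}^\ast,\overline{p}^\ast}$ and show~$\pth$ is almost \ml-random for it. The inner-bound argument via recursive selection processes is essentially the paper's \propositionref{prop:bounds:church}. The gap is in your converse. You write that after replacing~$\test$ by~$\tfrac{1}{2}(\test+1)$, ``$\test$ has a lower semicomputable multiplier process~$\multprocess$''. Positivity guarantees the multiplier process~$\multprocess_\test(\sit)(x)=\test(\sit x)/\test(\sit)$ \emph{exists}, but it does not make it lower semicomputable: the quotient of two lower semicomputable numbers need not be lower semicomputable, because~$1/\test(\sit)$ is only \emph{upper} semicomputable. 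This is precisely the gap between \ml- and \wml-randomness in Section~\ref{sec:notionsofrandomness}. Consequently, your~$\test'=\smash{\mint[\multprocess']}$---a product of such ratios over the recursive set~$A$---is not shown to be lower semicomputable, so you cannot conclude~$\test'\in\mltests{\frcstsystem}$ and the contradiction fails.

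The paper's \propositionref{prop:aid} sidesteps this by never passing through the multiplier. Instead of freezing the bet to~$1$ outside~$A$, it multiplies the original~$\test$ by a \emph{recursive} damping process~$\process^\ast(\sit)=(1/K)^{\selection^\ast(\sit)}$, where~$K$ is a rational bound on~$\multprocess_\test$ guaranteed by~$I\subset(0,1)$ and~$\selection^\ast$ counts the ``bad'' prefixes. The product~$\test^\ast=\test\cdot\process^\ast$ is then automatically lower semicomputable, and one checks directly that it is a supermartingale for~$\frcstsystem$ (on~$A$ because~$\frcstsystem(\sit)\subseteq I$, off~$A$ because the~$1/K$ factor kills any admissible multiplier). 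Along~$\pth$ the damping acts only finitely often, so~$\test^\ast$ and~$\test$ differ by a fixed positive factor. Your construction can be repaired by exactly this trick; the point is that the modification must act multiplicatively on~$\test$ itself, not on its (only implicitly defined) multiplier.
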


\begin{comment}
\begin{definition}
A path~$\pth \in \pths$ is \emph{(weakly)} \emph{Church random} for an interval forecast~$I \in \intervals$ if for every recursive \emph{(}temporal\emph{)} selection process~$\selection$ for which $\lim_{n \to \infty}\selectionsum=\infty$, it holds that
% for which $\lim_{n \to \infty}\sum_{k=0}^{n-1}\selection(\pthtok)=\infty$ :
\begin{equation*}
\min I \leq 
\liminf_{n \to \infty} \frac{\sum_{k=0}^{n-1}\selection(\pthtok)\pthatkplus}{\sum_{k=0}^{n-1}\selection(\pthtok)} \leq
\limsup_{n \to \infty} \frac{\sum_{k=0}^{n-1}\selection(\pthtok)\pthatkplus}{\sum_{k=0}^{n-1}\selection(\pthtok)} \leq
\max I.
\end{equation*}
\end{definition}
\end{comment}

\section{What do smallest interval forecasts look like?} \label{sec:question2}
\begin{comment}
Classically, the above randomness notions were only introduced with respect to the precise stationary interval forecast~$\nicefrac{1}{2}$, and their similarities and differences were thoroughly studied from a precise-probabilistic perspective.
Regarding the similarities, and in accordance with \propositionref{prop:relations:interval}, if a path~$\pth \in \pths$ is Martin-Löf random for~$\nicefrac{1}{2}$, then it is also computably, Schnorr and Church random for~$\nicefrac{1}{2}$.
Concerning the differences, it was \emph{exempli gratia} proven that there is a path~$\pth \in \pths$ that is Church random but not computably random for~$\nicefrac{1}{2}$ \cite{merkle2008}.

Meanwhile, the example that we discussed in Section~\ref{??} shows that there are paths~$\pth \in \pths$ for which $\churchI=\compI=\nicefrac{1}{2}$, but which are not computably random for~$\nicefrac{1}{2}$.
\end{comment}

Having established conditions under which $\randomI$ is the smallest interval forecast $\pth$ is (almost) random for, we now set out to find an alternative expression for this interval forecast. %a path~$\pth \in \pths$ is \random-random for.
Forecasting systems will play a vital role in this part of the story; for every path~$\pth \in \pths$ and every forecasting system~$\frcstsystem \in \frcstsystems$, we consider the interval forecast~$\frcstI$ defined by
\begin{equation*}
\frcstI \coloneqq \Big[\liminf_{n \to \infty}\underline{\frcstsystem}(\pthton),\limsup_{n \to \infty}\overline{\frcstsystem}(\pthton)\Big].
\end{equation*}

When we restrict our attention to computable forecasting systems~$\frcstsystem\in\frcstsystems$, and if we assume that a path~$\pth \in \pths$ is \random-random for such a forecasting system~$\frcstsystem$, with $\random \in \set{\ml,\wml,\co,\s,\ch,\wch}$, then the forecasting system~$\frcstsystem$ imposes outer bounds on the interval forecast~$\randomI$ in the following sense.
\begin{comment}
\begin{proposition} \label{prop:bounds:ML}
If a path~$\pth \in \pths$ is Martin-Löf random for a computable forecasting system~$\frcstsystem \in \frcstsystems$, then it is almost Martin-Löf random for the interval forecast
%it is almost Martin-Löf random for the interval forecast
\begin{equation*}
\Big[\liminf_{n \to \infty}\underline{\frcstsystem}(\pthton),\limsup_{n \to \infty}\overline{\frcstsystem}(\pthton)\Big].
\end{equation*}
\end{proposition}
\end{comment}

\begin{proposition} \label{prop:bounds:outer}
For any~$\random \in \set{\ml,\wml,\co,\s,\ch,\wch}$ and any path~$\pth \in \pths$ that is \random-random for a computable forecasting system~$\frcstsystem \in \frcstsystems$: $\randomI \subseteq \frcstI$.
%it is almost Martin-Löf random for the interval forecast
%\begin{equation*}
%\randomI \subseteq \Big[\liminf_{n \to \infty}\underline{\frcstsystem}(\pthton),\limsup_{n \to \infty}\overline{\frcstsystem}(\pthton)\Big].
%\end{equation*}
\end{proposition}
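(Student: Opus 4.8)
The plan is to deduce the inclusion from the identity $\randomI = \bigcap \randomintervals$. Writing $\underline{r} \coloneqq \liminf_{n \to \infty} \lfrcstsystem(\pthton)$ and $\overline{r} \coloneqq \limsup_{n \to \infty} \ufrcstsystem(\pthton)$, so that $\frcstI = \sqgroup{\underline{r},\overline{r}}$, it suffices to show that $\pth$ is $\random$-random for the stationary interval forecast $\sqgroup{a,b}$ whenever $a,b \in \rationals$ satisfy $0 \leq a < \underline{r}$ and $\overline{r} < b \leq 1$ (and, in the degenerate cases $\underline{r} = 0$ or $\overline{r} = 1$, one simply takes $a = 0$, respectively $b = 1$): since the intersection of all such $\sqgroup{a,b}$ is exactly $\sqgroup{\underline{r},\overline{r}} = \frcstI$, this yields $\randomI \subseteq \frcstI$. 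So I fix such rationals $a,b$ and put $I' \coloneqq \sqgroup{a,b}$. The one geometric fact I will use throughout is that, because $a < \underline{r}$ and $\overline{r} < b$, there are $N \in \naturalswithzero$ and $\eta > 0$ with $\frcstsystem(\pthton) \subseteq \sqgroup{a+\eta,\,b-\eta}$ for all $n \geq N$; in particular $\frcstsystem(\pthton) \subseteq I'$ eventually.

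For the two Church-style notions $\random \in \set{\ch,\wch}$ I would argue directly---no contradiction, and computability of $\frcstsystem$ is not even needed. Given a recursive (temporal, when $\random = \wch$) selection process $\selection$ with $\lim_{n\to\infty}\selectionsum = \infty$, I combine the two inequalities that $\random$-randomness of $\pth$ for $\frcstsystem$ supplies through \definitionref{def:churchrandom} with the elementary observation that, because $\selectionsum$ diverges, the selection-weighted Ces\`aro average of $\lfrcstsystem(\pthtok)$ has $\liminf$ at least $\liminf_{k}\lfrcstsystem(\pthtok) = \underline{r}$ (any finite initial segment is washed out), and symmetrically that the analogous average of $\ufrcstsystem(\pthtok)$ has $\limsup$ at most $\overline{r}$. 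Using that $\liminf$ is superadditive, this gives $\underline{r} \leq \liminf_n A_n \leq \limsup_n A_n \leq \overline{r}$ for the averages $A_n \coloneqq \bigl(\sum_{k=0}^{n-1}\selection(\pthtok)\pthatkplus\bigr)/\selectionsum$, which is precisely the (stationary-forecast) condition for $\pth$ to be $\random$-random for $\frcstI$ itself---hence a fortiori for every $\sqgroup{a,b} \supseteq \frcstI$.

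For the four martingale notions $\random \in \set{\ml,\wml,\co,\s}$ I would argue by contradiction: assume $\pth$ is not $\random$-random for $I'$ and fix a witnessing test supermartingale $\test \in \overline{\mathbb{T}}_{\random}(I')$ that is unbounded---computably unbounded, with real growth function $\realgrowth$, when $\random = \s$---on $\pth$. Since $I'$ has nonempty interior, such a $\test$ is automatically strictly positive along $\pth$ (were $\test(\pthton) = 0$, the supermartingale inequality at an interior probability of $I'$ would force $\test$ to stay $0$ from $\pthton$ on, contradicting unboundedness). The aim is to manufacture a test supermartingale $\test'$ for $\frcstsystem$, \emph{in the same effectivity class}, that is again (computably) unbounded on $\pth$, contradicting $\random$-randomness of $\pth$ for $\frcstsystem$. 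Call a situation $\sit$ \emph{certified} when a rational approximation of $\frcstsystem(\sit)$ to precision $2^{-\abs{\sit}}$---available since $\frcstsystem$ is computable---together with the $2^{-\abs{\sit}}$ safety margin already witnesses $\frcstsystem(\sit) \subseteq I'$; because $a$ and $b$ are rational this is a \emph{decidable} property of $\sit$, and every certified $\sit$ does satisfy $\frcstsystem(\sit) \subseteq I'$. Let $\test'$ copy the bet of $\test$ at every certified situation and place no bet at every other situation. Then $\test'$ is non-negative with $\test'(\init) = 1$, and it is a supermartingale for $\frcstsystem$: at a certified $\sit$ its increment $\adddelta\test'(\sit)$ is a non-negative rescaling of $\adddelta\test(\sit)$, which is allowable for $I' \supseteq \frcstsystem(\sit)$ and hence for $\frcstsystem(\sit)$; at every other $\sit$ its increment is $0$. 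Finally, by the first paragraph all but finitely many prefixes $\pthton$ are certified, so from some index onwards $\test'$ tracks $\test$ exactly---it equals a fixed positive multiple of $\test$ there---and therefore inherits the (computable) unboundedness of $\test$ on $\pth$.

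The real work, and the step I expect to be the main obstacle, is the clause ``in the same effectivity class'': $\test'$ must be computable when $\random \in \set{\co,\s}$, generated by a lower semicomputable multiplier process when $\random = \wml$, and lower semicomputable when $\random = \ml$. For $\wml$ this is fairly direct---the multiplier process equal to that of $\test$ at certified situations and to $1$ elsewhere is lower semicomputable, since the certified predicate is decidable---and for $\co$ it is equally direct after normalising so that no division by $0$ occurs. For $\s$ there is the extra twist that the positive constant relating $\test'$ to $\test$ along $\pth$ need not be computable, so the growth function witnessing computable unboundedness of $\test'$ must be taken to grow slowly enough (e.g.\ $\sqrt{\realgrowth}$ in place of $\realgrowth$) to absorb it. The genuinely delicate case is $\ml$: a lower semicomputable test supermartingale need not have a lower semicomputable multiplier process, so one cannot realise ``copy the bet of $\test$'' multiplicatively, and the lower-semicomputable witness $\test'$ has to be assembled directly; carrying out this bookkeeping---together with the care needed around situations where $\test$ vanishes---is the technical heart of the argument.
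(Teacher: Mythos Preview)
Your overall plan---show that every interval $\sqgroup{a,b}$ with rational $a<\underline r$ and $\overline r<b$ lies in $\randomintervals$, then intersect---is exactly the route the paper takes (packaged there as an auxiliary proposition). Your treatment of the two Church notions is in fact cleaner than the paper's: you argue directly via sub/super-additivity of $\liminf/\limsup$ and weighted Ces\`aro averages, and you are right that computability of $\frcstsystem$ is not needed there (the paper routes the Church case through the same machinery as the martingale cases and does use computability, somewhat unnecessarily). For $\wml$, $\co$, $\s$ your freeze-at-uncertified construction is essentially what the paper does, and your remark about slowing the growth function for $\s$ is apt.

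The one genuine gap is the $\ml$ case, which you explicitly leave open: as you note, a lower semicomputable $\test$ need not have a lower semicomputable multiplier, so ``copy the multiplier at certified situations, freeze elsewhere'' cannot be written down as a lower semicomputable object. The paper sidesteps this by \emph{scaling down instead of freezing}. Since $I'=\sqgroup{a,b}$ has $0<b$ and $a<1$, every non-negative multiplier $\multprocess_\test(\sit)$ with $\uex_{I'}(\multprocess_\test(\sit))\leq1$ is bounded above by a rational constant $K>1$ (take $K\geq\max\{1/b,\,1/(1-a)\}$). Letting $\selection^\ast(\sit)$ count the uncertified strict ancestors of $\sit$ (a recursive natural-valued process, by decidability of your predicate) and setting
\[
\test^\ast(\sit)\coloneqq\test(\sit)\cdot(1/K)^{\selection^\ast(\sit)},
\]
one gets a manifestly lower semicomputable process (product of a lower semicomputable and a recursive positive rational factor). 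At certified $\sit$ its multiplier is $\multprocess_\test(\sit)$, allowable for $\frcstsystem(\sit)\subseteq I'$; at uncertified $\sit$ its multiplier is $\multprocess_\test(\sit)/K\leq1$, allowable for \emph{any} forecast. Along $\pth$ only finitely many prefixes are uncertified, so $\test^\ast(\pthton)$ is eventually a fixed positive multiple of $\test(\pthton)$ and inherits unboundedness. This single device handles $\ml$ (and in fact all four martingale notions) uniformly, avoiding the bookkeeping you anticipated.
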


\begin{comment}
\begin{proposition} \label{prop:bounds:outer:time}
For any~$\random \in \set{\ml,\wml,\co,\s}$ and any path~$\pth \in \pths$ that is \random-random for a temporal forecasting system~$\frcstsystem \in \frcstsystems$: 
%it is almost Martin-Löf random for the interval forecast
\begin{equation*}
\randomI \subseteq \Big[\liminf_{n \to \infty}\underline{\frcstsystem}(\pthton),\limsup_{n \to \infty}\overline{\frcstsystem}(\pthton)\Big].
\end{equation*}
\end{proposition}
\end{comment}

If we only consider computable \emph{precise} forecasting systems~$\frcstsystem \in \frcstsystems$ and assume that a path~$\pth \in \pths$ is \random-random for~$\frcstsystem$, with $\random \in \set{\ml,\wml,\co,\ch}$, then the forecasting system~$\frcstsystem$ completely characterises the interval forecast~$\randomI$.
\begin{theorem} \label{the:coinciding}
For any~$\random \in \set{\ml,\wml,\co,\ch}$ and any path~$\pth \in \pths$ that is \random-random for a computable precise forecasting system~$\frcstsystem \in \frcstsystems$: $\randomI=\frcstI$.
%\begin{equation*}
%\mlI=\weakmlI=\compI=\churchI=\Big[\liminf_{n \to \infty}\frcstsystem(\pthton),\limsup_{n \to \infty}\frcstsystem(\pthton)\Big].
%\end{equation*}
%\footnote{For \theoremref{the:coinciding} to hold, it actually suffices to consider computable forecasting systems~$\frcstsystem \in \frcstsystems$ that are \emph{eventually} precise in the sense that $\limsup_{n \to \infty}\underline{\frcstsystem}(\pthton)=\liminf_{n \to \infty}\overline{\frcstsystem}(\pthton)$}
\end{theorem}
%By the relations at the end of Section~\ref{sec:notionsofrandomness}, we know that it also suffices for a path~$\pth \in \pths$ to be \ml-,\wml- or \co-random for such a forecasting system.
\begin{comment}
\begin{proposition} \label{prop:bounds:church:inner}
For any~$\random \in \set{\ml,\wml,\co,\ch}$ and any path~$\pth \in \pths$ that is \random-random for a computable precise forecasting system~$\frcstsystem \in \frcstsystems$:
\begin{equation*}
\Big[\liminf_{n \to \infty}\frcstsystem(\pthton),\limsup_{n \to \infty}\frcstsystem(\pthton)\Big] \subseteq \churchI.
\end{equation*}
\end{proposition}
\end{comment}
%By the relations at the end of Section~\ref{sec:notionsofrandomness}, we know that the conclusions of \propositionref{prop:bounds:church:inner} still hold when a path~$\pth \in \pths$ is \ml-,\wml- or \co-random for a computable precise forecasting system~$\frcstsystem \in \frcstsystems$.
%When the computable precise forecasting systems~$\frcstsystem \in \frcstsystems$ are also temporal, then they specify inner bounds for the smallest interval forecast~$\weakchurchI$ for which a path~$\pth \in \pths$ is weakly church random, and they do this for a larger set of randomness notions for which $\pth$ should be random.
%When also imposing time-dependence on the computable precise forecasting systems~$\frcstsystem \in \frcstsystems$, then $\frcstsystem$ also specifies 

When the computable precise forecasting systems~$\frcstsystem \in \frcstsystems$ are also temporal, this result applies to Schnorr and weak Church randomness as well.
%we can even strengthen the conclusions of \theoremref{the:coinciding}.
%In this case, the forecasting system~$\frcstsystem$ completely characterises the smallest interval forecast~$\randomI$ for which a path~$\pth \in \pths$ is \random-random, with $\random \in \set{\ml,\wml,\co,\s,\ch,\wch}$.
%-random for such a forecasting system~$\frcstsystem \in \frcstsystems$, the intervals $\randomI$ coincide for all randomness notions that have been discussed in this paper.
\begin{theorem} \label{the:coinciding:temporal}
For any~$\random \in \set{\ml,\wml,\co,\s,\ch,\wch}$ and any path~\smash{$\pth \in \pths$} that is \smash{\random}-random for a computable precise temporal forecasting system~$\frcstsystem \in \frcstsystems$: $\randomI=\frcstI$.
%\begin{align*}
%\mlI&=\weakmlI=\compI=\schnorrI\\
%&=\churchI=\weakchurchI=\Big[\liminf_{n \to \infty}\frcstsystem(\pthton),\limsup_{n \to \infty}\frcstsystem(\pthton)\Big].
%\end{align*}
\end{theorem}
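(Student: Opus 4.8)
The plan is to route all six notions through weak Church randomness. Since $\frcstsystem$ is precise and temporal, write $p_n\coloneqq\lfrcstsystem(\sit)=\ufrcstsystem(\sit)$ for any situation~$\sit$ with $\abs{\sit}=n$, so that $\frcstI=\sqgroup{\liminf_{n\to\infty}p_n,\limsup_{n\to\infty}p_n}$; and since $\frcstsystem$ is computable, fix a computable $\tilde p\colon\naturalswithzero\to\rationals$ with $\abs{\tilde p(n)-p_n}\leq2^{-n}$ for all~$n$. First I would use the computability of~$\frcstsystem$ to invoke \equationref{eq:inclusions:comp}: the \random-random path~$\pth$ is then in particular \wch-random for~$\frcstsystem$. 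Combined with \propositionref{prop:bounds:outer} (which gives $\randomI\subseteq\frcstI$) and \corollaryref{cor:relations:interval} (which gives $\weakchurchI\subseteq\randomI$), this yields $\weakchurchI\subseteq\randomI\subseteq\frcstI$, so it suffices to prove $\weakchurchI=\frcstI$; the theorem then drops out for all six notions simultaneously. (For $\random\in\set{\ml,\wml,\co,\ch}$ one could alternatively just cite \theoremref{the:coinciding}, since a computable precise temporal forecasting system is in particular computable and precise.)

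For $\weakchurchI=\frcstI$: the inclusion $\weakchurchI\subseteq\frcstI$ is \propositionref{prop:bounds:outer} applied with $\random=\wch$, so the real work is $\frcstI\subseteq\weakchurchI=\bigcap\weakchurchintervals$, for which it is enough to show that every interval forecast~$I\in\intervals$ that~$\pth$ is \wch-random for satisfies $\min I\leq\liminf_{n\to\infty}p_n$ and $\limsup_{n\to\infty}p_n\leq\max I$. These two inequalities are symmetric under swapping the outcomes~$0$ and~$1$, so I would only treat the first. Assuming towards a contradiction that $\min I>\liminf_{n\to\infty}p_n$, I would pick a rational~$q$ with $\liminf_{n\to\infty}p_n<q<\min I$ and define the selection process $\selection(\sit)\coloneqq1$ if $\tilde p(\abs{\sit})<q$ and $\selection(\sit)\coloneqq0$ otherwise. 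This~$\selection$ is recursive (it only compares a computable rational with~$q$) and temporal (it depends on~$\sit$ only through~$\abs{\sit}$); since $\liminf_{n\to\infty}p_n<q$ there is a rational~$q'<q$ with $p_n<q'$ for infinitely many~$n$, and every such~$n$ with $2^{-n}<q-q'$ has $\tilde p(n)\leq p_n+2^{-n}<q$ and is thus selected, so $\lim_{n\to\infty}\selectionsum=\infty$; and whenever~$\selection$ selects a length-$k$ situation, $p_k\leq\tilde p(k)+2^{-k}<q+2^{-k}$, so the selected average of forecasts $b_n\coloneqq\frac{\sum_{k=0}^{n-1}\selection(\pthtok)p_k}{\selectionsum}$ satisfies $b_n<q+\frac{2}{\selectionsum}$ for all large~$n$, hence $\limsup_{n\to\infty}b_n\leq q$.

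I would then close the argument by feeding~$\selection$ into \definitionref{def:churchrandom} twice. Applied to the precise forecasting system~$\frcstsystem$ (where $\lfrcstsystem(\pthtok)=\ufrcstsystem(\pthtok)=p_k$), weak Church randomness of~$\pth$ forces the selected frequency of ones $a_n\coloneqq\frac{\sum_{k=0}^{n-1}\selection(\pthtok)\pthatkplus}{\selectionsum}$ to satisfy $\liminf_{n\to\infty}(a_n-b_n)\geq0$ and $\limsup_{n\to\infty}(a_n-b_n)\leq0$, hence $a_n-b_n\to0$ and $\liminf_{n\to\infty}a_n=\liminf_{n\to\infty}b_n$. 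Applied to the stationary forecasting system~$I$, weak Church randomness of~$\pth$ forces $\liminf_{n\to\infty}a_n\geq\min I$. Chaining these, $\min I\leq\liminf_{n\to\infty}a_n=\liminf_{n\to\infty}b_n\leq\limsup_{n\to\infty}b_n\leq q<\min I$, the contradiction I need.

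I expect the main obstacle to be exactly the construction of that selection process: it must be simultaneously recursive \emph{and} temporal while still isolating the time instants where the forecast~$p_n$ drops essentially below~$q$. Temporality of~$\frcstsystem$ is precisely what makes~$\selection$ temporal---this is the reason that, unlike in \theoremref{the:coinciding}, the statement for \s{} and \wch{} requires~$\frcstsystem$ to be temporal---while using the rational estimate~$\tilde p(\abs{\sit})$ rather than trying to decide~$p_{\abs{\sit}}<q$ exactly is what keeps~$\selection$ recursive. Everything else is the squeezing bookkeeping, which is routine once $\weakchurchI=\frcstI$ is in place.
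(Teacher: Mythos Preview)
Your proposal is correct and follows essentially the same route as the paper: reduce to $\weakchurchI=\frcstI$ via \equationref{eq:inclusions:comp}, \propositionref{prop:bounds:outer} and \corollaryref{cor:relations:interval}, and for the nontrivial inclusion build a recursive \emph{temporal} selection process picking out the time instants where the forecast dips below a rational threshold, then apply \definitionref{def:churchrandom} to both~$\frcstsystem$ and the stationary interval. The paper packages that selection-process argument as a separate proposition (its \propositionref{prop:bounds:weakchurch:inner}, via \propositionref{prop:bounds:weakchurch}) and uses a fixed precision~$2^{-N}$ rather than your varying~$2^{-n}$, but these are cosmetic differences, not a different approach.
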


\section{When do these smallest interval forecasts coincide?} \label{sec:unalike}

%Regarding our third main question, we investigate how these smallest interval forecasts compare.
%In this way, 
Finally, we put an old question into a new perspective: to what extent are randomness notions different?
We take an `imprecise' perspective here, by comparing the smallest interval forecasts for which a path~$\pth \in \pths$ is (almost) \random-random, with $\random \in \set{\ml,\wml,\co,\s,\ch,\wch}$.
As we will see, it follows from our previous exposition that there are quite some paths for which these smallest interval forecasts coincide.
%Regarding our third main question, we investigate how these smallest interval forecasts compare.
%In this way, we put an old question into a new perspective; how different are the considered (im)precise-probabilistic randomness notions, in the sense that we wonder when the interval forecasts~$\randomI$ for which a path~$\pth \in \pths$ is (almost) completely \random-random coincide, with $\random \in \set{\ml,\wml,\co,\s,\ch,\wch}$.

Let us start by considering a path~$\pth \in \pths$ that is \ml-random for some computable precise forecasting system~$\frcstsystem \in \frcstsystems$; similar results hold when focusing on weaker notions of randomness.
We know from \equationref{eq:inclusions:comp} that $\pth$ is then also \wml-, \co- and \ch-random for~$\frcstsystem$.
By invoking Propositions~\ref{prop:churches}, \ref{prop:almost:random} and ~\ref{prop:ml:almost:completely}, we infer that $\randomI$ is the smallest interval forecast that $\pth$ is (almost) \random-random for, for any $\random \in \set{\ml,\wml,\co,\ch}$.
Moreover, by \theoremref{the:coinciding}, these smallest interval forecasts all equal $\frcstI$ and therefore coincide, i.e., $\mlI=\weakmlI=\compI=\churchI=\frcstI$.
%Consequently, by \theoremref{the:coinciding}, the smallest interval forecasts for which $\pth$ is (almost) \ml-, \wml-, \co- and \ch-random coincide: $\mlI=\weakmlI=\compI=\churchI=\frcstI$. 

By only looking at temporal computable precise forecasting systems~$\frcstsystem \in \frcstsystems$, we can even strengthen these conclusions.
For example, using a similar argument as before---but using \theoremref{the:coinciding:temporal} instead of \ref{the:coinciding}---we see that if $\pth$ is \ml-random for such a forecasting system~$\frcstsystem$, then the smallest interval forecasts~$\randomI$ for which $\pth$ is (almost) \random-random coincide for all six randomness notions that we consider.

Looking at these results, the question arises whether there are paths~$\pth \in \pths$ for which the various interval forecasts~$I_\random(\pth)$ do not coincide.
It turns out that such paths do exist.
%it is indeed possible to contradict the conclusions of \theoremref{the:coinciding} and~\ref{the:coinciding:temporal}.
We start by showing that the smallest interval forecasts~$\compI$ and~$\schnorrI$ for which a path~$\pth \in \pths$ is respectively almost \co- and almost \s-random do not always coincide; this result is mainly a reinterpretation of a result in \cite{CoomanBock2021,Wang1996}.
%assumption of randomness for a computable precise temporal forecasting systems is needed in \theoremref{the:coinciding:temporal} since there is at least one path~$\pth \in \pths$ for which $\compI \neq \schnorrI$
%computable randomness and Schnorr randomness can be quite unalike, in the sense that there is a path~$\pth \in \pths$ for which $\compI \neq \schnorrI$, thereby contrasting with the conclusions of \theoremref{the:coinciding:temporal}; 

\begin{proposition} \label{prop:counterexample1}
There is a path~$\pth \in \pths$ such that $\schnorrI = \nicefrac{1}{2} \in \sqgroup{\nicefrac{1}{2},1} \subseteq \compI$.
%that is Schnorr random for~$\nicefrac{1}{2}$, but not computably random for any interval forecast~$I \subset \group{0,1}$.
\end{proposition}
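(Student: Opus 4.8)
The plan is to exhibit one path $\pth\in\pths$ with two properties:
(a) $\pth$ is \s-random for the precise stationary forecast $\nicefrac{1}{2}$; and
(b) $\pth$ is not \co-random for $\sqgroup{0,q}$, for any $q\in\sqgroup{0,1}$ with $q<1$.
These two properties already give the statement, by a short argument. From (a), $\nicefrac{1}{2}=\set{\nicefrac{1}{2}}\in\schnorrintervals$, so $\schnorrI=\bigcap\schnorrintervals\subseteq\set{\nicefrac{1}{2}}$, and since $\schnorrI\neq\emptyset$ this forces $\schnorrI=\nicefrac{1}{2}$. Moreover, \propertyref{prop:law:large:numbers} applied to \s-randomness gives $\lim_{n\to\infty}\frac{1}{n}\sum_{k=1}^{n}\pthatk=\nicefrac{1}{2}$, so \propertyref{prop:law:large:numbers} applied to \co-randomness yields $\min I\leq\nicefrac{1}{2}\leq\max I$ for every $I\in\compintervals$; hence $\nicefrac{1}{2}\in\bigcap\compintervals=\compI$. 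Next, suppose $\pth$ were \co-random for some $I=\sqgroup{p,q}$ with $q<1$. Since $I\subseteq\sqgroup{0,q}$, \propertyref{prop:monotone} would make $\pth$ \co-random for $\sqgroup{0,q}$, contradicting (b); so every $I\in\compintervals$ has $\max I=1$, and therefore $\max\compI=1$. As $\compI$ is an interval with $\nicefrac{1}{2}\in\compI$ and $\max\compI=1$, we get $\compI\supseteq\sqgroup{\nicefrac{1}{2},1}$, and trivially $\nicefrac{1}{2}\in\sqgroup{\nicefrac{1}{2},1}$. This is exactly the chain $\schnorrI=\nicefrac{1}{2}\in\sqgroup{\nicefrac{1}{2},1}\subseteq\compI$, and it is consistent with $\schnorrI\subseteq\compI$ from \corollaryref{cor:relations:interval}.

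It remains to construct a path satisfying (a) and (b); here I would adapt the classical separation of Schnorr randomness from computable randomness due to Wang~\cite{Wang1996} --- this is the reinterpretation alluded to, and the same construction underlies the strict inclusion $\comppths\subsetneq\schnorrpths$ established in~\cite{CoomanBock2021}. Concretely, one starts from a sequence that is \s-random for $\nicefrac{1}{2}$ and injects, at a sparse family of positions $N_1<N_2<\cdots$ that are \emph{not} computable (they can be defined from the underlying Schnorr random sequence), blocks of consecutive ones of lengths $\ell_k$ chosen so that $\sum_k\ell_k=\infty$ while $\sum_{k\colon N_k+\ell_k\leq n}\ell_k=o(n)$. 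The second condition keeps the relative frequency of ones equal to $\nicefrac{1}{2}$. The first, together with the fact that such a run can be \emph{detected} once it is in progress, provides, for each rational $q\in\sqgroup{0,1}$ with $q<1$, a computable test supermartingale for $\sqgroup{0,q}$ that idles until it detects a sufficiently long run of ones, then exploits that run by betting on the outcome~$1$ (which $\sqgroup{0,q}$ permits), cashing out progressively so as to realise a net gain over the run; this supermartingale is unbounded on $\pth$, so $\pth$ is not \co-random for $\sqgroup{0,q}$. Property (b) then follows for all $q<1$, since any $\sqgroup{0,q}$ with $q<1$ is contained in some $\sqgroup{0,q'}$ with $q'$ rational and $q'<1$, so \propertyref{prop:monotone} carries the conclusion across. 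Finally, because the positions $N_k$ are not computable, no \emph{computable} real growth function can certify the capital gains these runs produce, so no computable test supermartingale is computably unbounded on $\pth$, and $\pth$ remains \s-random for $\nicefrac{1}{2}$: property (a) holds.

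The main obstacle is exactly this last point: verifying that the injected runs do not push $\pth$ out of $\pths_\s(\nicefrac{1}{2})$, i.e., that no computable test supermartingale paired with a computable, non-decreasing, unbounded rate succeeds on the modified sequence. Managing this simultaneously with (i) keeping the frequency at $\nicefrac{1}{2}$, (ii) making the $\sqgroup{0,q}$-strategies succeed for \emph{every} rational $q<1$ --- which dictates how fast the lengths $\ell_k$ must grow relative to the losses such a strategy incurs on typical stretches of the sequence --- and (iii) placing the runs uncomputably, is the delicate part, and it is essentially the content of Wang's diagonalisation over all computable test supermartingales and all computable candidate rates. I would either invoke that construction verbatim, phrased as ``there is a path that is \s-random for $\nicefrac{1}{2}$ but not \co-random for $\sqgroup{0,q}$ for any $q<1$'', or reproduce the diagonalisation; either way, once such a $\pth$ is in hand, the reduction of the first paragraph completes the proof.
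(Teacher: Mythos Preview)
Your first paragraph is correct: properties (a) and (b) imply the statement, and your derivation of $\schnorrI=\nicefrac{1}{2}$, $\nicefrac{1}{2}\in\compI$, and $\max\compI=1$ from them is clean.

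The gap is in the construction. Neither of your two alternatives actually delivers (b). Wang's result, invoked verbatim, gives a path that is \s-random for $\nicefrac{1}{2}$ but not \co-random for $\nicefrac{1}{2}$; it does \emph{not} assert failure of \co-randomness for every $\sqgroup{0,q}$ with $q<1$. Since $\set{\nicefrac{1}{2}}\subseteq\sqgroup{0,q}$ for $q\geq\nicefrac{1}{2}$, monotonicity goes the wrong way: non-randomness for $\nicefrac{1}{2}$ does not propagate up to $\sqgroup{0,q}$. Your other alternative---injecting long runs of ones at non-computable positions into a Schnorr random sequence---is a different construction from Wang's (his path is built by direct diagonalisation, not by modifying a pre-existing random sequence), and you yourself flag the main obstacle: verifying that the modified path stays \s-random for $\nicefrac{1}{2}$. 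The heuristic ``the positions are non-computable, so no computable growth function can track the gains'' is not a proof; non-computability of the positions does not by itself rule out a computable supermartingale whose growth happens to be certifiable by some computable rate.

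The paper closes this gap differently and more simply. It uses a structural feature of Wang's witnessing martingale $\test$: at every situation, either $\test(\sit x)=2x\,\test(\sit)$ for both $x$, or $\test(\sit x)=\test(\sit)$ for both $x$. Because $\test$ is recursive and integer-valued on $\pth$ (it takes values in $\set{0}\cup\set{2^m}$), one can \emph{decide} which rule applies, yielding a recursive selection process $\selection$ that picks exactly the positions where $\test$ doubles. On $\pth$, every selected position has outcome $1$ (otherwise $\test$ would drop to $0$ and stay there, contradicting unboundedness), and infinitely many positions are selected (again by unboundedness). Hence the selected relative frequency is $1$, so by \definitionref{def:churchrandom} every $I\in\churchintervals$ has $\max I=1$, giving $\max\churchI=1$ and then $\max\compI\geq\max\churchI=1$ via \corollaryref{cor:relations:interval}. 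This replaces your property (b) without any new construction: the whole argument rides on Wang's existing martingale.
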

% By \theoremref{the:coinciding:temporal}, we know that the above path is not Martin-Löf random for any computable precise temporal forecasting system~$\frcstsystem \in \frcstsystems$, while it is Martin-Löf random for at least one computable interval forecast~$I \in \intervals$.
%Clearly, the path~$\pth \in \pths$ in \propositionref{prop:counterexample1} is not computably random for any precise computable temporal forecasting system~$\frcstsystem \in \frcstsystems$.
%Otherwise, the interval forecasts~$\compI$ and~$\schnorrI$ would coincide.
%In line with the recent results in \cite{CoomanBock2021}, this path functions as an alternative example to show that there is at least one path that is computably random for some non-singular interval forecast~$I \in \intervals$, but not for any precise computable forecasting system.

We are also able to show that there is a path~$\pth \in \pths$ such that $\compI=\nicefrac{1}{2}$ is the smallest interval forecast it is almost \co-random for, whereas $\pth$ is not almost \ml-random for $\nicefrac{1}{2}$; for this result, we have drawn inspiration from~\cite{Schnorr1971}.
%that is almost computably random for the smallest interval forecast $\compI$, whilst not being almost Martin-Löf random this interval forecast; for this result, we have drawn inspiration from~\cite{Schnorr1971}.
%assumption of randomness for a precise computable forecasting system is needed in \theoremref{the:coinciding} since there is at least one path~$\pth \in \pths$ for which $\mlI\neq\compI$.
%Martin-Löf and computable randomness can be quite unalike, in the sense that there is a path~$\pth \in \pths$ for which $\mlI \neq \compI$, thereby contrasting with the conclusions of \theoremref{the:coinciding}.
\begin{proposition} \label{prop:counterexample2}
For every $\delta \in \group{0,\nicefrac{1}{2}}$, there is a path~$\pth \in \pths$ such that $\compI=\nicefrac{1}{2}$ and $I \notin \mlintervals$ for any $I \in \intervals$ such that $I \subseteq \sqgroup{\nicefrac{1}{2}-\delta,\nicefrac{1}{2}+\delta}$.
%that is \co-random but not almost \ml-random for the precise interval forecast~$\nicefrac{1}{2} \in \intervals$.
%$\compI = \nicefrac{1}{2} \neq \mlI$.
%For every~$\delta \in \group{0,\nicefrac{1}{2}}$, there is a path~$\pth \in \pths$ such that $\compI = \nicefrac{1}{2}$ and~$\abs{\mlI} \geq \nicefrac{1}{2} - \delta$.% \in \sqgroup{\nicefrac{1}{2},1-\delta} \subseteq \mlI$.
%For any rational interval forecast~$I \subset \group{0,1}$, there is a path~$\pth \in \pths$ that is computably random for~$\nicefrac{1}{2}$, but not Martin-Löf random for I.
\end{proposition}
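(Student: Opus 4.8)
The plan is to produce, for the given $\delta\in\group{0,\nicefrac{1}{2}}$, a single path $\pth\in\pths$ that is (i)~computably random for the stationary, precise, (trivially) computable forecasting system~$\nicefrac{1}{2}$, and (ii)~\emph{not} \ml-random for the stationary interval forecast~$\sqgroup{\nicefrac{1}{2}-\delta,\nicefrac{1}{2}+\delta}$. Granting such a~$\pth$, the statement follows immediately: from~(i) we get $\nicefrac{1}{2}\in\compintervals$, hence $\compI=\bigcap\compintervals\subseteq\nicefrac{1}{2}$, and since $\compI\neq\emptyset$ this forces $\compI=\nicefrac{1}{2}$; from~(ii) and the monotonicity \propertyref{prop:monotone} we get $\pth\notin\pths_\ml(I)$ --- that is, $I\notin\mlintervals$ --- for every $I\in\intervals$ with $I\subseteq\sqgroup{\nicefrac{1}{2}-\delta,\nicefrac{1}{2}+\delta}$.

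The substance is the construction of~$\pth$, for which we follow the blueprint of Schnorr's construction of a sequence that is computably random but not \ml-random for~$\nicefrac{1}{2}$~\cite{Schnorr1971}. That construction proceeds block by block: the content~$w_k$ of the $k$-th block, of length~$\ell_k$, is selected --- by a priority argument interleaved over the blocks --- so as to lie in a uniformly computably enumerable set~$S_k\subseteq\set{0,1}^{\ell_k}$ of at most~$2^{m_k}$ candidate strings, with $m_k\ll\ell_k$, while simultaneously preventing the $k$-th computable supermartingale for~$\nicefrac{1}{2}$ (in a fixed enumeration of candidate strategies) from accumulating capital on block~$k$; this last feature is what keeps~$\pth$ computably random for~$\nicefrac{1}{2}$. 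The witness that~$\pth$ fails \ml-randomness is then the lower semicomputable test supermartingale \(d\coloneqq\sum_{k}2^{-k}\,2^{-m_k}\sum_{w\in S_k}d_{k,w}\), where~$d_{k,w}$ leaves its capital untouched outside block~$k$ and, on block~$k$, bets everything on the successive outcomes spelling out~$w$ --- multiplying its capital by~$2$ on each correct digit and losing it all on the first incorrect one. Since $w_k\in S_k$ and the blocks are consecutive, $d\group{\pthto{b_k}}\ge 2^{-k-m_k}2^{\ell_k}$ at the endpoint~$b_k$ of block~$k$, which tends to~$\infty$ as soon as $\ell_k-m_k-k\to\infty$.

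The genuinely new step is to make this witness safe for the interval~$\sqgroup{\nicefrac{1}{2}-\delta,\nicefrac{1}{2}+\delta}$, not just for~$\nicefrac{1}{2}$. The only gambles used by~$d$ are all-or-nothing bets on a predicted digit, i.e.\ multiplier gambles~$D$ with $D(x)=1+\beta$ on the predicted outcome~$x$ and $D=0$ on the other; a short computation shows that for $\beta\coloneqq\nicefrac{1-2\delta}{1+2\delta}$ one has $\max_{p\in\sqgroup{\nicefrac{1}{2}-\delta,\nicefrac{1}{2}+\delta}}\ex_p(D-1)=0$, so such a~$D$ is allowable for~$\sqgroup{\nicefrac{1}{2}-\delta,\nicefrac{1}{2}+\delta}$. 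Replacing every factor-$2$ gain in each~$d_{k,w}$ by a factor-$(1+\beta)$ gain therefore turns~$d$ into a lower semicomputable test supermartingale for~$\sqgroup{\nicefrac{1}{2}-\delta,\nicefrac{1}{2}+\delta}$ --- a countable convex mixture of test supermartingales for a fixed stationary interval forecast being itself one --- now with $d\group{\pthto{b_k}}\ge 2^{-k-m_k}(1+\beta)^{\ell_k}$. Since $\log_2(1+\beta)$ is a positive constant depending only on~$\delta$, it suffices to rescale the block lengths, taking $\ell_k\coloneqq\big\lceil(m_k+2k)/\log_2(1+\beta)\big\rceil$, to obtain $d\group{\pthto{b_k}}\ge 2^{k}\to\infty$; hence $\pth$ is not \ml-random for~$\sqgroup{\nicefrac{1}{2}-\delta,\nicefrac{1}{2}+\delta}$, giving~(ii).

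I expect the main obstacle to be the verification that Schnorr's priority construction survives this fixed enlargement of the blocks: that on each (now longer) block one can still choose~$w_k$ inside the small, uniformly computably enumerable set~$S_k$ while neutralising the relevant computable $\nicefrac{1}{2}$-supermartingales, so that~$\pth$ indeed remains computably random for~$\nicefrac{1}{2}$, i.e.\ that~(i) holds. By contrast, the passage to the interval costs essentially nothing, precisely because the witnessing supermartingale bets only all-or-nothing, so its gains are trivially rescalable from~$2$ to~$1+\beta$.
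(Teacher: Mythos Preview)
Your route is genuinely different from the paper's, and there is one fixable gap you should be aware of.

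\textbf{The gap.} You take $\beta=(1-2\delta)/(1+2\delta)$, but $\delta\in\group{0,\nicefrac12}$ is an arbitrary real, not assumed computable; then $1+\beta$ need not be computable, and your witness~$d$ need not be lower semicomputable. The paper sidesteps this by first choosing a \emph{rational} $\delta'$ with $\delta<\delta'<\nicefrac12$, proving non-\ml-randomness for the rational interval $I_{\delta'}=\sqgroup{\nicefrac12-\delta',\nicefrac12+\delta'}$, and only then invoking monotonicity to conclude for every $I\subseteq I_\delta\subseteq I_{\delta'}$. In your framework the same fix works: pick any rational $\beta'\in(0,\beta)$, making the multipliers rational (so $d$ is even computable) while keeping the gambles allowable for $\sqgroup{\nicefrac12-\delta,\nicefrac12+\delta}$.

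\textbf{Different decomposition.} Even after this fix, the arguments diverge. The paper does not revisit Schnorr's block/priority mechanics. For a rational interval $J\subset\group{0,1}$ it first constructs a \emph{universal} lower semicomputable test supermartingale $T^*$ for~$J$, unbounded on every path that is not \wml-random for~$J$ (obtained from a recursive enumeration of all lower semicomputable supermartingale multipliers for~$J$). The path is then built stage by stage: at stage~$i$ one extends the current prefix along a \emph{recursive} path on which a weighted sum of the first~$i$ recursive positive rational $\nicefrac12$-supermartingales stays bounded (an elementary ``follow a non-increasing branch'' lemma); since recursive paths are never \co-random---hence never \wml-random---for~$J$, $T^*$ is unbounded on that recursive extension, so one can stop at some $n_{i+1}$ with $T^*\ge i+1$. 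This yields both \co-randomness for~$\nicefrac12$ and failure of \ml-randomness for~$J$ without any block-length bookkeeping. Your direct rescaling of the all-or-nothing bets is more concrete and makes the dependence on~$\delta$ transparent, but---as you yourself flag---it leaves the burden of re-verifying that the priority construction still delivers \co-randomness for~$\nicefrac12$ after enlarging the blocks; the paper's universal-test approach trades that verification for some enumeration machinery.
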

%Similarly, by \theoremref{the:coinciding}, we know that the above path is not Martin-Löf random for any computable precise forecasting system~$\frcstsystem \in \frcstsystems$, while it is Martin-Löf random for at least one computable interval forecast~$I \in \intervals$.
Clearly, the path~$\pth \in \pths$ in \propositionref{prop:counterexample2} cannot be Martin-Löf random for a precise computable forecasting system~$\frcstsystem \in \frcstsystems$, because otherwise, the interval forecasts~$\compI$ and~$\mlI$ would coincide by \equationref{eq:inclusions:comp} and \theoremref{the:coinciding}, and~$\pth$ would therefore be almost Martin-Löf random for~$\nicefrac{1}{2}$ by \propositionref{prop:ml:almost:completely}, contradicting the result.
So the path $\pth$ in this result is an example of a path for which we do not know whether there is a smallest interval forecast that $\pth$ is almost Martin-Löf random for.
However, if there is such a smallest interval forecast, then \propositionref{prop:counterexample2} shows it is definitely not equal to $\nicefrac{1}{2}$; due to \corollaryref{cor:relations:interval}, it must then strictly include $\nicefrac{1}{2}$.
%it will necessarily be~$\mlI$.

% In line with the recent results in \cite{CoomanBock2021}, the path~\(\pth\) in \propositionref{prop:counterexample2} can be used to construct an example that shows that there is at least one path that is Martin-Löf random for some non-singular interval forecast~$I \in \intervals$, but not for any precise computable forecasting system~$\frcstsystem \in \frcstsystems$.

\section{Conclusions and future work}

%We conclude that if a path~$\pth \in \pths$ is \random-random for a precise computable (temporal) forecasting system, with $\random \in \set{\ml,\wml,\co,\s,\ch,\wch}$, then the smallest interval forecasts coincide for (some of) the
%By putting on an imprecise pair of goggles and looking at several precise-probabilistic randomness notions, 
We've come to the conclusion that various (non-stationary) precise-probabilistic randomness notions in the literature are, in some respects, not that different; if a path is random for a computable precise (temporal) forecasting system, then the smallest interval forecast for which it is (almost) random coincides for several randomness notions.
The computability condition on the precise forecasting system is important for this result, but we don't think it is that big a restriction.
After all, computable forecasting systems are those that can be computed by a finite algorithm up to any desired precision, and therefore, they are arguably the only ones that are of practical relevance.
%By putting on an imprecise pair of goggles and looking at several precise-probabilistic randomness notions, we were able to reveal that the smallest interval forecasts for which a path is (almost) random often coincide when focusing on computable precise (temporal) forecasting systems.

%For those readers who are quite familiar with adopting precise probabilities in statistics, imposing computability on the forecasting systems should not feel as a big restriction.
%After all, randomness provides a physical interpretation for probability theory \cite{ZvonkinLevin1970}, and in statistics we only try to learn computable probability models.

An important concept that made several of our results possible was that of almost randomness, a notion that is closely related to randomness but is---slightly---easier to satisfy. In our future work, we would like to take a closer look at the difference between these two notions.
%Meanwhile, we question to what extent the considered randomness notions really differ for a path $\pth \in \pths$ when the smallest interval forecasts for which $\pth$ is random and almost random coincide.
%From a theoretical point of view, the difference is of course clear.
In particular, the present discussion, together with our work in \cite{floris2021}, makes us wonder to what extent the distinction between them is relevant in a more practical context.

We also plan to continue investigating the open question whether there is for every path some smallest interval forecast for which it is (almost) Martin-Löf random.
Finally, there is still quite some work to do in finding out whether the randomness notions we consider here are all different from a stationary imprecise-probabilistic perspective, in the sense that there are paths for which the smallest interval forecasts for which they are (almost) random do not coincide. %for which they are (almost) random do not coincide.

%the two corresponding smallest interval forecasts for which a path is (almost) random do not coincide.

%they are different from a stationary imprecise perspective, in the sense that

%As a general remark, there 

%we hope you are 

%As a general remark, we empathise that there is still a

%s a general remark, we like to empathise, as is clear from our discussion, that we are c

\section*{Acknowledgments}
Floris Persiau’s research was supported by FWO (Research Foundation-Flanders), project number 11H5521N.

\bibliographystyle{splncs04}
\bibliography{biblio.bib}

\begin{ArxiveExt}

\section*{Appendix A}

The field of computability theory studies what it means for a mathematical object to be implementable.
As its basic building blocks, it has natural partial maps $\phi \colon \mathcal{D} \to \naturalswithzero$, which are partial maps from $\mathcal{D}$, which denotes a (countably infinite) set that can be encoded by a finite alphabet,
%\textcolor{red}{effective map onto the natural numbers?}
to the non-negative integers. %p. 8 (62)
Examples of such sets~$\mathcal{D}$ are given by $\naturalswithzero$, $\sits$, $\sits \times \posspace$, $\sits\times\naturalswithzero$ and~$\sits\times\posspace\times\naturalswithzero$.
A natural partial map~$\phi$ is called \emph{partial recursive} if it can be computed by a Turing machine.
If the Turing machine halts for all inputs $d \in \mathcal{D}$, that is, if the Turing machine computes the number $\phi(d)$ in a finite number of steps for every $d \in \mathcal{D}$, then the map $\phi$ computed is defined for all arguments and we call it \emph{total recursive}, or simply \emph{recursive}.
Otherwise, if there is at least one $d \in \mathcal{D}$ for which the Turing machine does not halt, that is, if there is an input $d \in \mathcal{D}$ for which the Turing machine computes forever, then $\phi$ is called \emph{strictly partial recursive}.
By the Church--Turing thesis, the natural map $\phi$ being partial recursive is equivalent to the existence of a finite algorithm that given the input $d \in \mathcal{D}$, outputs the non-negative integer $\phi(d)\in\naturalswithzero$ if $\phi(d)$ is defined, and that never finishes otherwise \cite{DowneyHirschfeldt2010,LiVitanyi2008,Pour-ElRichards2016}.

%For example, a natural function $\selectionf \colon \naturalswithzero \to \{0,1\}$ is called a recursive \emph{selection function} if there is some finite algorithm that upon the input $n \in \naturalswithzero$, outputs the binary digit $\selectionf(n)\in\{0,1\}$.
%Similarly, a natural function $\selectionf \colon \sits \to \{0,1\}$ is called a recursive \emph{selection process} if there is some finite algorithm that upon the input $n \in \naturalswithzero$, outputs the binary digit $\selectionf(n)\in\{0,1\}$.

The notion of (partial) recursive natural maps $\phi$ can be extended to maps that have a range of rational numbers $\rationals$.
We call a rational map~$q \colon \mathcal{D} \to \rationals$ \emph{recursive} if there are three recursive natural maps $a,b,c\colon \mathcal{D}\to\naturalswithzero$ such that 
\begin{equation*}
b(d)\neq 0 \textrm{ and } q(d)=(-1)^{c(d)}\frac{a(d)}{b(d)} \textrm{ for all } d \in \mathcal{D}.
\end{equation*}
Since a finite number of algorithms can always be combined into one finite algorithm, a rational map~$q$ is recursive if and only if there is a finite algorithm that given the input $d \in \mathcal{D}$, outputs the rational number~$q(d)\in \rationals$ \cite{MichaelSipser2006}.
Similarly, a rational partial map $q$ is called \emph{partial recursive} if there is a finite algorithm that given the input $d \in \mathcal{D}$, outputs the rational number $q(d) \in \rationals$ if $q(d)$ is defined, and that doesn't terminate for the input $d$ otherwise.

We can use recursive rational maps to provide several notions of what it means for a real map~$r\colon\mathcal{D}\to\reals$ to be implementable.
We call a real map~$r\colon \mathcal{D} \to \reals$ \emph{lower semicomputable} if there is some recursive rational map~$q \colon \mathcal{D}\times\naturalswithzero \to \rationals$ such that
\begin{equation*}
q(d,n+1) \geq q(d,n) \textrm{ and } r(d)= \lim_{m \to \infty} q(d,m) \textrm{ for all } d \in \mathcal{D} \textrm{ and } n \in \naturalswithzero.
\end{equation*}
This means that there is some finite algorithm that, for every $d \in \mathcal{D}$, allows us to approach the real number~$r(d)$ from below---but without knowing, for any given $n \in \naturalswithzero$, how good the lower bound $q(d,n)$ is.
Correspondingly, we call a real map~$r$ \emph{upper semicomputable} if the real map~$-r$ is lower semicomputable.
A real multiplier process $\multprocess\colon \sits \to \gambles$ is then called \emph{lower \textnormal{(}upper\textnormal{)} semicomputable} if it is lower (upper) semicomputable as a real map on $\sits \times \posspace$ that maps any $(\sit,x)\in\sits\times\posspace$ to $\multprocess(\sit)(x)$.
%Since finitely many algorithms can be combined into one, this is equivalent to the existence of a recursive rational map~$q \colon \sits \times \posspace \times \naturalswithzero \to \rationals$ such that $q(\sit,x,n+1) \geq q(\sit,x,n)$ and $\multprocess(\sit)(x)= \lim_{m \to \infty} q(\sit,x,m)$ for all $\sit \in \sits$, $x \in \posspace$ and $n \in \naturalswithzero$.

Moreover, a real map~$r$ is called \emph{computable} if it is both lower and upper semicomputable.
Equivalently \cite{CoomanBock2017}, a real map~$r \colon \mathcal{D} \to \reals$ is computable if and only if there is some recursive rational map~$q\colon\mathcal{D}\times\naturalswithzero \to \rationals$ such that
\begin{equation*}
\abs{r(d)-q(d,n)}<2^{-n} \textrm{ for all } d \in \mathcal{D} \textrm{ and } n \in \naturalswithzero.
\end{equation*}
This means that there is some finite algorithm that, for every $d \in \mathcal{D}$ and $n \in \naturalswithzero$, allows us to approximate the real number~$r(d)$ with a precision of $2^{-n}$.
A real number $x \in \reals$ is then called computable if there is some recursive rational map~$q\colon \naturalswithzero \to \rationals$ such that $\abs{x-q(n)}<2^{-n}$ for all $n \in \naturalswithzero$.
An interval forecast~$I \in \intervals$ is called computable if the reals $\min I$ and~$\max I$ are both computable.
Similarly, a forecasting system~$\frcstsystem \in \frcstsystems$ is called computable if the real processes~$\lfrcstsystem$ and~$\ufrcstsystem$ are both computable.

We will also consider infinite sequences $(q_i\colon \mathcal{D}\to \rationals)_{i \in \naturalswithzero}$ of (partial) recursive rational maps.
Such a sequence is called a \emph{recursive enumeration} of (partial) recursive rational maps if there is a (partial) recursive rational map~$q\colon \naturalswithzero \times \mathcal{D} \to \rationals$ such that for every~$i \in \naturalswithzero$ and $d \in \mathcal{D}$: $q(i,d)=q_i(d)$ if $q_i(d)$ is defined, and $q(i,d)$ is undefined otherwise. %\textcolor{red}{link tussen undefined and doesn't halt preciseren}

We will use recursive enumerations of recursive rational maps, to define what it means for an infinite sequence $(r_i\colon \mathcal{D}\to \reals )_{i \in \naturalswithzero}$ of lower semicomputable real maps to be \emph{recursively enumerable}.
Such a sequence is called a \emph{recursive enumeration} of lower semicomputable real maps if there is a recursive rational map~$q\colon \naturalswithzero \times \mathcal{D}\times\naturalswithzero \to \rationals$ such that for every~$i \in \naturalswithzero$:
\begin{equation*}
q(i,d,n+1) \geq q(i,d,n) \textrm{ and } r_i(d)= \lim_{m \to \infty} q(i,d,m) \textrm{ for all } d \in \mathcal{D} \textrm{ and } n \in \naturalswithzero.
\end{equation*}
An infinite sequence $(\multprocess_i\colon \sits \to \gambles)_{i \in \naturalswithzero}$ of lower semicomputable real multiplier processes is then called a \emph{recursive enumeration} of lower semicomputable real multiplier processes if it is recursively enumerable as an infinite sequence of real maps on $\sits \times \posspace$ that maps any $(i,\sit,x) \in \naturalswithzero \times \sits \times \posspace$ to $\multprocess_i(\sit ,x)$.
%\textcolor{red}{Heb je dit wel nog nodig?}
\begin{comment}
there is a recursive rational map ~$q\colon \naturalswithzero \times \sits \times \posspace \times \naturalswithzero \to \rationals$ such that for every~$i \in \naturalswithzero$:
\begin{equation*}
q(i,\sit,x,n+1) \geq q(i,\sit,x,n) \textrm{ and } \multprocess_i(\sit)(x)= \lim_{m \to \infty} q(i,\sit,x,m) 
\end{equation*}
for all $\sit \in \sits$, $x \in \posspace$ and $n \in \naturalswithzero$.
\end{comment}

It turns out there is a recursive enumeration of all partial recursive rational maps.
\begin{proposition}[\cite{DowneyHirschfeldt2010,LiVitanyi2008}] \label{prop:enumeration}
There is a recursive enumeration $(q_i)_{i \in \naturalswithzero}$ of partial recursive rational maps $q_i\colon \mathcal{D} \times \naturalswithzero \to \rationals$ that contains every partial recursive map $q\colon \mathcal{D} \times \naturalswithzero \to \rationals$.
\end{proposition}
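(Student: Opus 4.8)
The plan is to reduce this to the classical enumeration theorem for partial recursive \emph{natural} maps and then transport it across the standard coding of rationals by triples of naturals. For readability write $\mathcal{E}\coloneqq\mathcal{D}\times\naturalswithzero$; this is again a set that can be encoded by a finite alphabet, hence one of the admissible domains. The classical universal-machine result (see \cite{DowneyHirschfeldt2010,LiVitanyi2008}) provides a recursive enumeration $(\phi_i)_{i\in\naturalswithzero}$ of \emph{all} partial recursive natural maps $\phi\colon\mathcal{E}\to\naturalswithzero$: there is a single partial recursive $\phi\colon\naturalswithzero\times\mathcal{E}\to\naturalswithzero$ with $\phi(i,d)=\phi_i(d)$ whenever the right-hand side is defined, and every algorithm computing a natural map on $\mathcal{E}$ occurs in the list. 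Everything below is uniform bookkeeping on top of this.

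Next I would fix a computable bijection $\langle\cdot,\cdot,\cdot\rangle\colon\naturalswithzero^3\to\naturalswithzero$ with computable inverse, and define, for each $i=\langle a,b,c\rangle$, a rational partial map $q_i\colon\mathcal{E}\to\rationals$ by
\begin{equation*}
q_i(d)\coloneqq(-1)^{\phi_c(d)}\,\frac{\phi_a(d)}{\phi_b(d)+1},
\end{equation*}
with the convention that $q_i(d)$ is defined exactly when $\phi_a(d)$, $\phi_b(d)$ and $\phi_c(d)$ are all defined. Each $q_i$ is partial recursive: on input $d$ one runs the algorithms for $\phi_a,\phi_b,\phi_c$ and, as soon as all three halt, forms the displayed rational — the denominator $\phi_b(d)+1$ is never zero, so this is always a well-formed rational. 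Moreover $(i,d)\mapsto q_i(d)$ is itself partial recursive, since from $i$ one computably recovers $(a,b,c)$ and then uses the universal map $\phi$ to simulate $\phi_a,\phi_b,\phi_c$ on $d$. Hence $(q_i)_{i\in\naturalswithzero}$ is a recursive enumeration of partial recursive rational maps $\mathcal{D}\times\naturalswithzero\to\rationals$ in the sense of the definition in Appendix~A.

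It remains to show the list is exhaustive. Given a partial recursive rational map $q\colon\mathcal{D}\times\naturalswithzero\to\rationals$, there is by definition a finite algorithm that on input $d$ either fails to terminate or halts with output $q(d)$. From it I would build three algorithms computing natural maps $a^\star,b^\star,c^\star\colon\mathcal{E}\to\naturalswithzero$ \emph{with the same domain as $q$}: run the algorithm for $q$ and, once it halts with value $r$, write $r=(-1)^{c^\star(d)}a^\star(d)/(b^\star(d)+1)$, e.g. by taking the lowest-terms representation of $r$ with denominator $\geq 1$, subtracting $1$ from that denominator, and letting $c^\star(d)\in\{0,1\}$ record the sign. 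Every rational admits such a representation, so these are genuine partial recursive natural maps; by the enumeration theorem, $a^\star=\phi_a$, $b^\star=\phi_b$, $c^\star=\phi_c$ for suitable indices $a,b,c$, and then $q_{\langle a,b,c\rangle}=q$.

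The only delicate points are bookkeeping: arranging the three auxiliary natural maps so that their common domain is \emph{exactly} the domain of $q$ (handled by producing no output until the algorithm for $q$ has halted, and discarding its answer thereafter), and ensuring the construction is uniform in $i$ so that the result is a recursive enumeration rather than merely a sequence of recursive maps (handled by appealing to a single universal machine instead of inspecting indices one at a time). Since all of this is standard computability theory, I expect no real obstacle beyond this careful matching of domains and uniformity.
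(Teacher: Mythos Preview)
Your argument is correct. The reduction to the classical enumeration theorem for partial recursive natural maps, followed by the coding trick $q_i(d)=(-1)^{\phi_c(d)}\phi_a(d)/(\phi_b(d)+1)$ with $i=\langle a,b,c\rangle$, is sound: the shift by $1$ in the denominator cleanly sidesteps the zero-denominator issue, the dovetailing of the three computations makes $(i,d)\mapsto q_i(d)$ partial recursive, and your exhaustiveness argument correctly arranges the three auxiliary natural maps to share exactly the domain of $q$.

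As for comparison with the paper: there is nothing to compare. The paper does not prove this proposition at all---it simply cites it from the standard references \cite{DowneyHirschfeldt2010,LiVitanyi2008} and moves on to use it in the proof of \corollaryref{cor:enumeration}. Your write-up is therefore more detailed than what the paper provides, and amounts to spelling out the (routine) argument that the cited textbooks contain in one form or another.
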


In the following corollary, we will consider non-negative \emph{extended} real maps $r \colon \mathcal{D} \to \sqgroup{0,+\infty}$, which are maps that define for every~$d \in \mathcal{D}$ a non-negative real number or $+\infty$.
Such a non-negative extended real map is called lower semicomputable if there is a recursive rational map $q\colon \mathcal{D} \times \naturalswithzero \to \rationals$ such that $q(d,n+1) \geq q(d,n)$ and $r(d)= \lim_{m \to \infty} q(d,m)$ for all $d \in \mathcal{D}$ and $n \in \naturalswithzero$.
An infinite sequence $(r_i\colon \mathcal{D} \to \sqgroup{0,+\infty})_{i \in \naturalswithzero}$ of lower semicomputable non-negative extended real maps is called a \emph{recursive enumeration} of lower semicomputable non-negative extended real maps if there is a recursive rational map~$q\colon \naturalswithzero \times \mathcal{D} \times \naturalswithzero \to \rationals$ such that, for every~$i \in \naturalswithzero$, $q(i,d,n+1) \geq q(i,d,n)$ and $r_i(d)= \lim_{m \to \infty} q(i,d,m)$ for all $d \in \mathcal{D}$ and $n \in \naturalswithzero$.
% With every such map~$r$, we associate a non-negative \emph{extended} real gamble process~$\multprocess_r$ as follows: $\multprocess_r(\sit)(x)\coloneqq r(\sit,x)$ for all~$\sit \in \sits$ and~$x \in \posspace$.

\begin{corollary} \label{cor:enumeration}
There is a recursive enumeration $(r_i)_{i \in \naturalswithzero}$ of lower semicomputable non-negative extended real maps $r_i\colon \mathcal{D} \to \sqgroup{0,+\infty}$ that contains every lower semicomputable map $r\colon \mathcal{D} \to \sqgroup{0,+\infty}$.
\end{corollary}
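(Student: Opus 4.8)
The statement follows from \propositionref{prop:enumeration} by a standard ``repair'' argument. The plan is to take the universal enumeration of partial recursive rational maps on $\mathcal{D}\times\naturalswithzero$ and turn each entry into a \emph{total} recursive rational map that is nonnegative and nondecreasing in its last argument, in such a way that any entry which already witnesses lower semicomputability of some $r\colon\mathcal{D}\to\sqgroup{0,+\infty}$ keeps the same pointwise limit. Concretely, fix a partial recursive rational map $q\colon\naturalswithzero\times\mathcal{D}\times\naturalswithzero\to\rationals$ as in \propositionref{prop:enumeration}, write $q_i(d,n)\coloneqq q(i,d,n)$, and set $b_i(d,j)\coloneqq\max\set{q_i(d,j),0}$ wherever $q_i(d,j)$ is defined. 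For each $(i,d,n)$, by clocked simulation run the computation of $q_i(d,j)$ for at most $n$ steps, for $j=0,\dots,n$, and let $A(i,d,n)$ be the set of those $m\le n$ for which $q_i(d,0),\dots,q_i(d,m)$ all halt within $n$ steps and $b_i(d,0)\le\cdots\le b_i(d,m)$. This set is finite, computable uniformly in $(i,d,n)$, and downward closed, hence empty or of the form $\set{0,\dots,m^\ast}$. Define
\begin{equation*}
\hat q(i,d,n)\coloneqq
\begin{cases}
b_i(d,\max A(i,d,n)) & \text{if } A(i,d,n)\neq\emptyset,\\
0 & \text{otherwise,}
\end{cases}
\end{equation*}
which is a total recursive rational map, and put $r_i(d)\coloneqq\lim_{m\to\infty}\hat q(i,d,m)$.

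I would then check that $(r_i)_{i\in\naturalswithzero}$ is a recursive enumeration of lower semicomputable non-negative extended real maps with witness $\hat q$. Nonnegativity of $\hat q$ is immediate. For monotonicity, note $A(i,d,n)\subseteq A(i,d,n+1)$, since raising the step bound never undoes a halt and the conditions on the $b_i$-values do not depend on $n$; hence $\max A(i,d,\cdot)$ is nondecreasing, and since $b_i(d,\cdot)$ is nondecreasing along the initial segment $A(i,d,n+1)$, we get $\hat q(i,d,n+1)\ge\hat q(i,d,n)$. Thus each $r_i$ is a well-defined map $\mathcal{D}\to\sqgroup{0,+\infty}$, and $\hat q$ has exactly the monotonicity-and-limit form demanded in the definition of a recursive enumeration of such maps.

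For completeness, given any lower semicomputable $r\colon\mathcal{D}\to\sqgroup{0,+\infty}$, pick a recursive rational witness $q'\colon\mathcal{D}\times\naturalswithzero\to\rationals$, so $q'$ is total, nondecreasing in its second argument, and $r(d)=\lim_m q'(d,m)$. Because $r(d)\ge0$, the truncation $j\mapsto\max\set{q'(d,j),0}$ is again nondecreasing and still has limit $r(d)$ — this uses $r(d)\ge0$ and is verified by a short case split according to whether $r(d)$ is $0$, finite and positive, or $+\infty$. By \propositionref{prop:enumeration}, $q'=q_i$ for some $i$. Since $q'$ is total, for every $d$ and every $N$ one has $N\in A(i,d,n)$ for all sufficiently large $n$, so $\max A(i,d,n)\to\infty$; as $b_i(d,\cdot)$ is nondecreasing with limit $r(d)$, this forces $\hat q(i,d,n)=b_i(d,\max A(i,d,n))\to r(d)$, i.e.\ $r_i=r$.

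The only genuinely delicate point is the truncation. The naive repair, which would advance $\max A$ only while the \emph{raw} values $q_i(d,0),\dots,q_i(d,m)$ remain nonnegative and nondecreasing, fails whenever $q_i$ witnesses some $r$ but begins with negative values: it would pin $r_i$ to $0$. Replacing $q_i$ by $b_i=\max\set{q_i,0}$ repairs this precisely because $r$ is assumed non-negative, and I would have to make sure this replacement is simultaneously harmless (it preserves the limit of every genuine witness) and sufficient (it makes $\hat q$ nonnegative while keeping it total recursive and nondecreasing). Everything else is a routine clocked-simulation bookkeeping argument.
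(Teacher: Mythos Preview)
Your proof is correct and takes essentially the same approach as the paper: both start from \propositionref{prop:enumeration} and repair each partial map into a total, non-negative, nondecreasing witness via clocked simulation combined with truncation at~$0$. The only (cosmetic) difference is that the paper obtains monotonicity in~$n$ by taking, at stage~$n$, the maximum of~$0$ and all values $q(i,d,l)$ with $l\le n$ that have halted within~$n$ steps, whereas you track the longest initial segment on which the truncated values are already nondecreasing and output its tip value.
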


\begin{proof}
By \propositionref{prop:enumeration}, there is a partial recursive rational map $q\colon \naturalswithzero \times \mathcal{D} \times \naturalswithzero \to \rationals$ such that for every partial recursive rational map $q'\colon \mathcal{D} \times \naturalswithzero \to \rationals$, there is some $i \in \naturalswithzero$ such that, for all $d \in \mathcal{D}$ and $n \in \naturalswithzero$, $q'(d,n)=q(i,d,n)$ if $q'(d,n)$ is defined, and $q(i,d,n)$ is undefined otherwise.
Since $q$ is a partial recursive rational map, there is some Turing machine $U$ that, when given the input $(i,d,n) \in \naturalswithzero\times\mathcal{D}\times\naturalswithzero$, outputs the rational $q(i,d,n)$ in a finite number of steps when $q(i,d,n)$ is defined, and computes forever otherwise.
%when $q(i,d,n)$ is undefined, and .

From the partial recursive rational map $q$, we will construct a total recursive non-negative rational map $q^\ast\colon\naturalswithzero\times\mathcal{D}\times\naturalswithzero\to\nonnegrationals$ as follows; for all $i,n \in \naturalswithzero$ and $d \in \mathcal{D}$, we put $q^\ast(i,d,0)\coloneqq0$ and
\begin{multline}
q^\ast(i,d,n+1)\coloneqq \max\big\{0,\{q(i,d,l) \in \rationals\colon 0\leq l \leq n+1 \textrm{ and } \\
U \textrm{ computes } q(i,d,l) \textrm{ in } \leq n+1 \textrm{ steps}\}\big\}. \label{eq:steps}
\end{multline}
It is easy to verify that the map $q^\ast$ is total recursive, non-negative and rational, and satisfies $q^\ast(i,d,n+1) \geq q^\ast(i,d,n)$ for all $i,n \in \naturalswithzero$ and $d \in \mathcal{D}$.

Since $q^\ast(i,d,n+1) \geq q^\ast(i,d,n)$ and $q^\ast(i,d,n)\geq 0$ for all $i,n \in \naturalswithzero$ and $d \in \mathcal{D}$, it follows that for every $i \in \naturalswithzero$, the extended real map $f_i\colon\mathcal{D}\to \sqgroup{0,+\infty}$, defined by $f_i(d)\coloneqq\lim_{n \to \infty}q^\ast(i,d,n)$ for all $d \in \mathcal{D}$, is non-negative and lower semicomputable.
Consequently, the infinite sequence $(f_i)_{i \in \naturalswithzero}$ is a recursive enumeration of lower semicomputable non-negative extended real maps.
Moreover, $(f_i)_{i \in \naturalswithzero}$ is a recursive enumeration of all lower semicomputable non-negative extended real maps.
Indeed, consider any lower semicomputable non-negative extended real map $f\colon\mathcal{D}\to\sqgroup{0,+\infty}$.
By definition, there is some total recursive rational map $q_f\colon \mathcal{D}\times \naturalswithzero \to \rationals$ such that 
\begin{equation*}
q_f(d,n+1) \geq q_f(d,n) \textrm{ and } f(d)= \lim_{m \to \infty} q_f(d,m) \textrm{ for all } d \in \mathcal{D} \textrm{ and } n \in \naturalswithzero.
\end{equation*}
Since every total recursive map is partial recursive, there is some $i \in \naturalswithzero$ such that $q_f(d,n)=q(i,d,n)$ for all $d \in \mathcal{D}$ and $n \in \naturalswithzero$.
We intend to show that, for the same $i \in \naturalswithzero$, it holds that $f(d)=\lim_{n \to \infty}q^\ast(i,d,n)$ for all $d \in \mathcal{D}$.
Since $q(i,d,n+1)\geq q(i,d,n)$ for all $d \in \mathcal{D}$ and $n \in \naturalswithzero$, it follows from \equationref{eq:steps} that $q(i,d,n) \geq q^\ast(i,d,n)$ for all $d \in \mathcal{D}$ and $n \in \naturalswithzero$.
Since $q^\ast(i,d,n+1) \geq q^\ast(i,d,n)$ for all $d \in \mathcal{D}$ and $n \in \naturalswithzero$, it therefore follows that $f(d)=\lim_{n \to \infty} q(i,d,n) \geq \lim_{n \to \infty} q^\ast(i,d,n)$ for all $d \in \mathcal{D}$.
Assume \emph{ex absurdo} that there is some $d \in \mathcal{D}$ such that $f(d)= \lim_{n \to \infty} q(i,d,n) > \lim_{n \to \infty} q^\ast(i,d,n)$.
Consequently, there is some $N \in \naturalswithzero$ such that $q(i,d,N) > q^\ast(i,d,n)$ for all $n \in \naturalswithzero$. 
However, we know that $U$ computes the rational $q(i,d,N)$ in a finite number of steps; assume that this number of steps equals $M \in \naturalswithzero$.
It now follows that
\begin{multline*}
q^\ast(i,d,\max\{M,N\}) = \max\big\{0,\{q(i,d,l) \in \rationals\colon 0\leq l \leq \max\{M,N\} \textrm{ and } \\
U \textrm{ computes } q(i,d,l) \textrm{ in } \leq \max\{M,N\} \textrm{ steps}\}\big\} \\
\begin{aligned}
&\geq 
\max\big\{0,\{q(i,d,N) \in \rationals \colon U \textrm{ computes } q(i,d,N) \textrm{ in } \leq M \textrm{ steps}\}\big\} \\
&= \max\{0,q(i,d,N)\}
\geq q(i,d,N),
\end{aligned}
\end{multline*}
a contradiction.
\qed
\end{proof}

\section*{Appendix B}

In this part of the Appendix, we have gathered all proofs, and all additional material necessary for understanding the argumentation in these proofs.

\section*{Additional material for Section~\ref{sec:betting}}

In our proofs, we will use an operator to characterise whether a gamble $\gamble \in \gambles$ is allowed by an interval forecast~$I \in \intervals$ or not. 
We associate with every interval forecast~$I \in \intervals$ the so-called \emph{upper expectation} $\uex_I \colon \gambles \to \reals$, defined by
\begin{align}
\uex_I(\gamble)
\coloneqq& 
\max_{p \in I} \{ p \gamble(1)+(1-p)\gamble(0) \} \textrm{ for all } \gamble \in \gambles. \label{eq:uex:1} 
\end{align}
Clearly, a gamble~$\gamble \in \gambles$ is allowable for an interval forecast~$I\in \intervals$ if and only if its upper expectation $\uex_I(\gamble)$ is non-positive, i.e., $\iff \uex_I(\gamble)\leq 0$.
%It turns out that a gamble~$\gamble \in \gambles$ is allowable for an interval forecast~$I\in \intervals$ if and only if its upper expectation $\uex_I(\gamble)$ is non-positive.
%\begin{proposition}[{\cite[Proposition 2]{floris2021}}]
%Consider any gamble~$\gamble \in \gambles$ and any interval forecast~$I \in \intervals$.
%Then $\gamble$ is an allowable gamble for~$I$ if and only if $\uex_I(\gamble)\leq0$.
%\end{proposition}

It will be convenient to have the following properties at our disposal.
For all~$\gamble \in \gambles$, it readily follows from \equationref{eq:uex:1} that
\begin{align}
\uex_I(\gamble)
=&
\max\{\min I \gamble(1)+(1-\min I)\gamble(0),\max I \gamble(1)+(1-\max I)\gamble(0)\} \label{eq:uex:2} \\
=&
\begin{cases}
\min I \gamble(1)+(1-\min I)\gamble(0) &\textrm{if } \gamble(1) \leq \gamble(0) \\
\max I \gamble(1)+(1-\max I)\gamble(0) &\textrm{if } \gamble(1) > \gamble(0).
\end{cases} \label{eq:uex:3}
\end{align}
The upper expectation operator $\uex_I$ also satisfies the following properties \cite{ItIP2014,walley1991}.\footnote{We note that \ref{prop:coherence:uniformcontinuity} is usually presented as a property that requires uniform convergence.
However, since $\posspace$ is a finite sample, it is easy to see that uniform convergence is equivalent to pointwise convergence.}

\begin{proposition}
Consider any interval forecast \(I \in \intervals\).
Then for all gambles \(\gamble,g \in \gambles\), all sequences of gambles $(\gamble_n)_{n \in \naturalswithzero} \in \gambles^\naturalswithzero$, and all \(\mu \in \reals\) and \(\lambda\geq 0\):
\begin{enumerate}[label=\upshape{C}\arabic*.,ref=\upshape{C}\arabic*,leftmargin=*,itemsep=0pt]
\item\label{axiom:coherence:bounds} \(\min \gamble \leq \uex_I(\gamble) \leq \max\gamble\)  \hfill{\upshape[boundedness]}
\item\label{axiom:coherence:homogeneity} \(\uex_I(\lambda \gamble) = \lambda \uex_I(\gamble)\) \hfill{\upshape[non-negative homogeneity]}
\item\label{axiom:coherence:subsupadditivity} \(\uex_I(\gamble+g) \leq \uex_I(\gamble) + \uex_I(g)\) \hfill{\upshape[subadditivity]}
\item\label{axiom:coherence:constantadditivity} \(\uex_I( \gamble + \mu) = \uex_I(\gamble) + \mu\) \hfill{\upshape[constant additivity]}
\item \label{prop:coherence:increasingness} if $\gamble \leq g $ then $\uex_I(\gamble) \leq \uex_I(g)$ \upshape\hfill[increasingness]
\item \label{prop:coherence:uniformcontinuity} if $\lim_{n \to \infty} \gamble_n=\gamble$ then $\lim_{n \to \infty}\uex_I(\gamble_n) = \uex_I(\gamble)$. \upshape\hfill[pointwise convergence]
\end{enumerate}
\end{proposition}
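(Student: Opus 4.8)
The plan is to derive all six properties directly from the representation $\uex_I(\gamble)=\max_{p\in I}\ex_p(\gamble)$ in \equationref{eq:uex:1}, where $\ex_p(\gamble)\coloneqq p\gamble(1)+(1-p)\gamble(0)$, by exploiting two structural facts: for each fixed $p$ the functional $\ex_p$ is linear in $\gamble$, and $\uex_I$ is the pointwise maximum of these functionals over the (compact, non-empty) index set $I\subseteq\sqgroup{0,1}$. For \ref{axiom:coherence:bounds}, I would note that for every $p\in\sqgroup{0,1}$ the value $\ex_p(\gamble)$ is a convex combination of $\gamble(0)$ and $\gamble(1)$, so $\min\gamble\leq\ex_p(\gamble)\leq\max\gamble$; taking the maximum over $p\in I$ preserves both bounds. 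For \ref{axiom:coherence:homogeneity}, since $\lambda\geq0$ we have $\ex_p(\lambda\gamble)=\lambda\ex_p(\gamble)$ for every $p$, and maximisation over $p$ commutes with multiplication by the non-negative constant $\lambda$.

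The additivity-flavoured properties again follow by arguing pointwise in $p$. For \ref{axiom:coherence:subsupadditivity}, linearity gives $\ex_p(\gamble+g)=\ex_p(\gamble)+\ex_p(g)\leq\uex_I(\gamble)+\uex_I(g)$ for every $p\in I$, and taking the maximum over $p$ on the left-hand side yields subadditivity. For \ref{axiom:coherence:constantadditivity}, $\ex_p(\gamble+\mu)=\ex_p(\gamble)+\mu$ because $p+(1-p)=1$, so the maximum over $p$ is shifted by exactly $\mu$. For \ref{prop:coherence:increasingness}, $\gamble\leq g$ implies $\ex_p(\gamble)\leq\ex_p(g)$ for every $p$ (a convex combination is monotone in its arguments), whence the maxima over $p$ are ordered in the same way.

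The only property requiring a slightly different handle is \ref{prop:coherence:uniformcontinuity}, for which I would switch to the closed form in \equationref{eq:uex:2}: it expresses $\uex_I(\gamble)$ as the maximum of the two linear maps $(\gamble(0),\gamble(1))\mapsto\min I\,\gamble(1)+(1-\min I)\gamble(0)$ and $(\gamble(0),\gamble(1))\mapsto\max I\,\gamble(1)+(1-\max I)\gamble(0)$ of the pair $(\gamble(0),\gamble(1))\in\reals^2$. Because $\posspace$ is finite, $\lim_{n\to\infty}\gamble_n=\gamble$ simply means $\gamble_n(0)\to\gamble(0)$ and $\gamble_n(1)\to\gamble(1)$; each of the two linear maps is continuous and the maximum of finitely many continuous functions is continuous, so $\uex_I(\gamble_n)\to\uex_I(\gamble)$. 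I do not anticipate any real obstacle: the proposition collects standard facts about upper expectations on a binary space, and the only mild subtlety—made explicit in the footnote—is that on a finite sample space pointwise and uniform convergence coincide, which is precisely what makes \ref{prop:coherence:uniformcontinuity} immediate once the closed form is invoked.
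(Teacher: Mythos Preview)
Your proposal is correct. The paper does not actually prove this proposition: it simply cites standard references on imprecise probability \cite{ItIP2014,walley1991} and adds a footnote observing that, on the finite sample space $\posspace$, pointwise and uniform convergence coincide (which you also note). Your direct argument from the representation $\uex_I(\gamble)=\max_{p\in I}\ex_p(\gamble)$---exploiting linearity of each $\ex_p$ and elementary properties of pointwise maxima---is a perfectly clean self-contained verification that the cited references would ultimately reduce to in this binary setting anyway. The only thing worth remarking is that for \ref{prop:coherence:uniformcontinuity} you do not really need to pass through \equationref{eq:uex:2}: the same ``max of finitely many continuous linear functionals is continuous'' argument works directly from \equationref{eq:uex:1} once one notes that the maximum over the compact interval $I$ is attained at one of the endpoints; but your route via the explicit two-term maximum is equally valid and arguably more transparent.
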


\begin{comment}
We will also come across an operator that is closely related to $\uex_I$; we consider the conjugate \emph{lower expectation} $\lex_I\colon\gambles\to\reals$ defined by
\begin{equation*}
\lex_I(\gamble)\coloneqq-\uex_I(-\gamble) \textrm{ for all } \gamble \in \gambles.
\end{equation*}
\end{comment}

For every forecasting system~$\frcstsystem \in \frcstsystems$, there is a close connection between test supermartingales $\test \in \tests{\frcstsystem}$ and real multiplier processes~$\multprocess$.
To reveal this connection, we introduce the following terminology; a real multiplier process~$\multprocess$ is called a real supermartingale multiplier for the forecasting system~$\frcstsystem$ if $\uex_{\frcstsystem(\sit)}(\multprocess(\sit))\leq 1$ for all~$\sit \in \sits$.

\begin{corollary}[{\cite{floris2021}}] \label{cor:from:M:to:D}
Consider a forecasting system~\(\frcstsystem\) and a (recursive) positive rational test supermartingale \(\test \in \tests{\frcstsystem}\).
Then, \(\multprocess_\test\) is a (recursive) positive rational supermartingale multiplier for \(\frcstsystem\).
\end{corollary}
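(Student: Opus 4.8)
The plan is to exploit the simple affine relationship between the multiplier process $\multprocess_\test$ and the process difference $\adddelta\test$, and then to read off the bound from the coherence properties of the upper expectation operators $\uex_{\frcstsystem(\sit)}$.

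First I would dispatch the easy parts. Positivity and rationality of $\multprocess_\test$ are immediate: for every $\sit \in \sits$ and $x \in \posspace$, $\multprocess_\test(\sit)(x) = \test(\sit\,x)/\test(\sit)$ is a quotient of two positive rationals (here the positivity of $\test$ is what guarantees both $\test(\sit\,x)>0$ and, crucially, $\test(\sit)\neq 0$), hence itself a positive rational, so $\multprocess_\test$ is a well-defined positive rational multiplier process. If in addition $\test$ is recursive---which, since $\test$ is rational-valued, means there is a finite algorithm returning the exact value $\test(\sit)$ for every $\sit\in\sits$---then $\multprocess_\test$ is recursive as well, because the algorithm can compute $\test(\sit\,x)$ and $\test(\sit)$ and then return their quotient, division being an effective operation on rationals.

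The substantive step is the supermartingale-multiplier inequality $\uex_{\frcstsystem(\sit)}(\multprocess_\test(\sit)) \leq 1$ for every $\sit \in \sits$. Here I would rewrite, for each fixed $\sit$,
\begin{equation*}
\multprocess_\test(\sit)(x) = \frac{\test(\sit\,x)}{\test(\sit)} = 1 + \frac{1}{\test(\sit)}\bigl(\test(\sit\,x) - \test(\sit)\bigr) = 1 + \frac{1}{\test(\sit)}\,\adddelta\test(\sit)(x) \textrm{ for all } x \in \posspace,
\end{equation*}
so that, viewed as a gamble, $\multprocess_\test(\sit) = 1 + \lambda\,\adddelta\test(\sit)$ with $\lambda \coloneqq 1/\test(\sit) > 0$. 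Applying constant additivity~\ref{axiom:coherence:constantadditivity} and non-negative homogeneity~\ref{axiom:coherence:homogeneity} of $\uex_{\frcstsystem(\sit)}$ then gives $\uex_{\frcstsystem(\sit)}(\multprocess_\test(\sit)) = 1 + \lambda\,\uex_{\frcstsystem(\sit)}(\adddelta\test(\sit))$. Because $\test$ is a supermartingale for $\frcstsystem$, we have $\uex_{\frcstsystem(\sit)}(\adddelta\test(\sit)) \leq 0$, and since $\lambda > 0$ this yields $\uex_{\frcstsystem(\sit)}(\multprocess_\test(\sit)) \leq 1$, which is exactly what the definition of a supermartingale multiplier for $\frcstsystem$ demands.

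I do not expect any real obstacle: the result is a bookkeeping consequence of the definitions together with the algebraic properties of $\uex_I$ collected above. The only point meriting care is that all the divisions---both in the definition of $\multprocess_\test$ and in the identity above---are legitimate, which is guaranteed precisely by the positivity hypothesis on $\test$; and for the parenthetical recursive claim, one only needs that subtraction and division of rationals preserve recursiveness.
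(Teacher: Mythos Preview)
Your proof is correct. The paper's own proof of this corollary simply cites Propositions~5 and~6 of \cite{floris2021}, so there is no in-text argument to compare against; your direct computation is exactly the standard one (and is essentially the same calculation the paper carries out later in \lemmaref{lem:toandback:M:D} for the supermartingale-multiplier equivalence, together with the obvious recursiveness and rationality observations).
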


\begin{proof}
This follows immediately from Proposition~5 and 6 in \cite{floris2021}.
\qed
\end{proof}

\begin{corollary}[{\cite{floris2021}}] \label{cor:from:D:to:M}
Consider a forecasting system~\(\frcstsystem\) and a (recursive) non-negative real supermartingale multiplier $\multprocess$ for~$\frcstsystem$.
Then, \(\mint\) is a (recursive) test supermartingale for~$\frcstsystem$.
\end{corollary}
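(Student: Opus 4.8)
The plan is to verify directly the three defining requirements of a test supermartingale for $\frcstsystem$: non-negativity, the normalisation $\mint(\init)=1$, and the supermartingale inequality $\uex_{\frcstsystem(\sit)}(\adddelta\mint(\sit))\leq0$ for every $\sit\in\sits$. The first two are immediate. Since $\multprocess$ is a non-negative gamble process, every factor in $\mint(\sit)=\prod_{k=0}^{\abs{\sit}-1}\multprocess(\xvaltok)(\xvalatkplus)$ is non-negative, so $\mint(\sit)\geq0$ for all $\sit\in\sits$; and $\mint(\init)$ is an empty product, hence equal to $1$.

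The crux is the supermartingale inequality. First I would rewrite the process difference. For any $\sit\in\sits$ and $x\in\posspace$, the definition of $\mint$ gives $\mint(\sit x)=\mint(\sit)\multprocess(\sit)(x)$, so $\adddelta\mint(\sit)(x)=\mint(\sit)\multprocess(\sit)(x)-\mint(\sit)=\mint(\sit)\bigl[\multprocess(\sit)(x)-1\bigr]$; equivalently, $\adddelta\mint(\sit)=\mint(\sit)\,\bigl(\multprocess(\sit)-1\bigr)$ as an identity between gambles on $\posspace$, where $\mint(\sit)\geq0$ is a scalar and $1$ denotes the constant gamble. Applying the coherence properties of $\uex_{\frcstsystem(\sit)}$—non-negative homogeneity~\ref{axiom:coherence:homogeneity}, which is legitimate precisely because $\mint(\sit)\geq0$, followed by constant additivity~\ref{axiom:coherence:constantadditivity}—then yields
\begin{equation*}
\uex_{\frcstsystem(\sit)}\bigl(\adddelta\mint(\sit)\bigr)
=\mint(\sit)\,\uex_{\frcstsystem(\sit)}\bigl(\multprocess(\sit)-1\bigr)
=\mint(\sit)\,\bigl[\uex_{\frcstsystem(\sit)}\bigl(\multprocess(\sit)\bigr)-1\bigr].
\end{equation*}
Since $\multprocess$ is a supermartingale multiplier for $\frcstsystem$, we have $\uex_{\frcstsystem(\sit)}(\multprocess(\sit))\leq1$, and together with $\mint(\sit)\geq0$ this makes the right-hand side non-positive—exactly the supermartingale condition. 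Combined with the first paragraph, this gives $\mint\in\tests{\frcstsystem}$.

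For the parenthetical recursive version, it then remains to observe that recursiveness transfers: for each $\sit\in\sits$ the number $\mint(\sit)$ is a finite product of the values $\multprocess(\xvaltok)(\xvalatkplus)$ with $k=0,\dots,\abs{\sit}-1$, and since a finite list of algorithms can always be merged into a single one, the map $\sit\mapsto\mint(\sit)$ is recursive whenever $\sit\mapsto\multprocess(\sit)$ is.

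I do not anticipate a genuine obstacle here; the only point requiring a moment's care is to keep the scalar $\mint(\sit)$ visibly non-negative before invoking homogeneity, so that the degenerate case $\mint(\sit)=0$—where $\adddelta\mint(\sit)$ is just the zero gamble and $\uex_{\frcstsystem(\sit)}(0)=0$—is absorbed uniformly into the same argument as the strictly positive case.
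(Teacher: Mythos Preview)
Your proof is correct. The paper itself simply cites two propositions from \cite{floris2021} and gives no further argument, so you have effectively supplied the direct verification those cited results encapsulate; your computation with \ref{axiom:coherence:homogeneity} and \ref{axiom:coherence:constantadditivity} is exactly the one the paper uses elsewhere (see the proof of \lemmaref{lem:toandback:M:D}).
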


\begin{proof}
This follows immediately from Proposition~4 and 7 in \cite{floris2021}.
\qed
\end{proof}

\section*{Additional material for Section~\ref{sec:notionsofrandomness}}

In what follows, we will use the following properties of Martin-Löf randomness and computable randomness.
%need to know that a path~$\pth \in \pths$ that is Martin-Löf or computably random for some forecasting system~$\frcstsystem \in \frcstsystems$ satisfies the following properties.

\begin{proposition} \label{prop:mlrandomness}
A path~\(\pth~\in~\pths\) is \ml-random for a forecasting system~\(\frcstsystem \in \frcstsystems\) if and only if every lower semicomputable positive test supermartingale~\(\test~\in~\testsml{\frcstsystem}\) is bounded on~$\pth$.
\end{proposition}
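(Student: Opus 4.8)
The plan is to prove the two implications separately. The ``only if'' direction will be immediate from the definition of $\ml$-randomness; the ``if'' direction will follow once I show that an arbitrary non-negative test supermartingale can be replaced by a positive one with the same asymptotic behaviour along $\pth$.

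For ``only if'', I would simply note that if $\pth$ is $\ml$-random for $\frcstsystem$, then by \definitionref{def:notionsofrandomness} every $\test\in\testsml{\frcstsystem}$ is bounded on $\pth$; since a lower semicomputable positive test supermartingale for $\frcstsystem$ is in particular a member of $\testsml{\frcstsystem}$, it too is bounded on $\pth$.

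For ``if'', suppose every lower semicomputable positive test supermartingale for $\frcstsystem$ is bounded on $\pth$, and fix an arbitrary $\test\in\testsml{\frcstsystem}$. I would set $\test'\coloneqq\frac{1}{2}(\test+1)$ and check that $\test'$ is a lower semicomputable positive test supermartingale for $\frcstsystem$: it is positive since $\test'\geq\frac{1}{2}>0$; it equals $1$ in $\init$; for each $\sit\in\sits$ the process difference is $\adddelta\test'(\sit)=\frac{1}{2}\adddelta\test(\sit)$, so non-negative homogeneity of $\uex_{\frcstsystem(\sit)}$ (\ref{axiom:coherence:homogeneity}) together with the supermartingale property of $\test$ gives $\uex_{\frcstsystem(\sit)}(\adddelta\test'(\sit))=\frac{1}{2}\uex_{\frcstsystem(\sit)}(\adddelta\test(\sit))\leq0$; and $\test'$ inherits lower semicomputability from $\test$ because, per situation, a recursive nondecreasing rational sequence $(q_n)_{n}$ approaching $\test(\sit)$ from below yields the recursive nondecreasing rational sequence $(\frac{1}{2}(q_n+1))_{n}$ approaching $\test'(\sit)$ from below. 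The hypothesis then makes $\test'$ bounded on $\pth$, and from $\test=2\test'-1\leq2\test'$ I conclude $\limsup_{n\to\infty}\test(\pthton)\leq2\limsup_{n\to\infty}\test'(\pthton)<\infty$; as $\test$ was arbitrary, $\pth$ is $\ml$-random for $\frcstsystem$.

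The hard part, such as it is, is only the bookkeeping in the ``if'' direction --- verifying that the affine combination $\frac{1}{2}(\test+1)$ stays inside the relevant class. This amounts to two routine facts: the test-supermartingale property is preserved under the transformation because only non-negative homogeneity is needed (the value at $\init$ and non-negativity being trivially preserved), and lower semicomputability is preserved because adding a computable constant and scaling by a positive rational preserve lower semicomputability. I do not anticipate any genuine difficulty beyond this.
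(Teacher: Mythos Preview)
Your proposal is correct and follows essentially the same approach as the paper: both use the transformation $\test'\coloneqq\frac{1}{2}(\test+1)$ to pass from an arbitrary lower semicomputable test supermartingale to a positive one with the same unboundedness behaviour on~$\pth$. The paper's proof is terser (it simply asserts that $\test'$ is a lower semicomputable positive test supermartingale and that $\limsup_n\test'(\pthton)=\infty\iff\limsup_n\test(\pthton)=\infty$), while you spell out the verification via \ref{axiom:coherence:homogeneity} and the preservation of lower semicomputability; your added detail is all correct.
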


\begin{proof}
Consider any lower semicomputable test supermartingale \(\test \in \testsml{\frcstsystem}\), and the process~$\test'$ defined by $\test'(\sit)\coloneqq \nicefrac{(\test(\sit)+1)}{2}$ for all~$\sit \in \sits$.
Clearly, $\test'$ is a lower semicomputable positive test supermartingale for~$\frcstsystem$, and 
\begin{equation*}
\limsup_{n \to \infty}\test'(\pthton)=\infty \quad \iff \quad \limsup_{n \to \infty}\test(\pthton)=\infty
\end{equation*}
for all~$\pth \in \pths$.
\qed
\end{proof}

\begin{proposition}[{\cite[Proposition 6]{sum2020persiau}}] \label{prop:equivalence:compandrec}
A path~\(\pth~\in~\pths\) is \co-random for a forecasting system~\(\frcstsystem \in \frcstsystems\) if and only if every recursive positive rational test supermartingale~\(\test \in \smash{\testscomp{\frcstsystem}}\) is bounded on~$\pth$.
\end{proposition}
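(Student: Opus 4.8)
The plan is to prove the two implications of the biconditional separately; the forward one is immediate, and the converse is where the content lies. For the forward implication, suppose $\pth$ is \co-random for $\frcstsystem$. Any recursive positive rational test supermartingale $\test\in\comptests{\frcstsystem}$ is in particular a computable test supermartingale for $\frcstsystem$ (a recursive rational-valued map is trivially computable as a real map), so by \definitionref{def:notionsofrandomness} it cannot be unbounded on $\pth$; that is, it is bounded on $\pth$.

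For the converse I would argue by contraposition: assuming $\pth$ is \emph{not} \co-random for $\frcstsystem$, I start from a computable test supermartingale $\test\in\comptests{\frcstsystem}$ that is unbounded on $\pth$ and manufacture a recursive \emph{positive rational} test supermartingale that is still unbounded on $\pth$. First replace $\test$ by $\test'\coloneqq\nicefrac{(\test+1)}{2}$; exactly as in the proof of \propositionref{prop:mlrandomness}, $\test'$ is a positive computable test supermartingale for $\frcstsystem$ that is unbounded on $\pth$, and moreover $\test'\geq\nicefrac{1}{2}$. Then pass to its multiplier process $\multprocess_{\test'}$, defined by $\multprocess_{\test'}(\sit)(x)\coloneqq\nicefrac{\test'(\sit x)}{\test'(\sit)}$, which is a positive computable real map on $\sits\times\posspace$; since $\multprocess_{\test'}(\sit)=1+\nicefrac{1}{\test'(\sit)}\,\adddelta\test'(\sit)$, constant additivity~\ref{axiom:coherence:constantadditivity} and non-negative homogeneity~\ref{axiom:coherence:homogeneity} give $\uex_{\frcstsystem(\sit)}(\multprocess_{\test'}(\sit))=1+\nicefrac{1}{\test'(\sit)}\,\uex_{\frcstsystem(\sit)}(\adddelta\test'(\sit))\leq1$, so $\multprocess_{\test'}$ is a positive computable supermartingale multiplier for $\frcstsystem$, with $\mint[\multprocess_{\test'}](\sit)=\test'(\sit)$ for every $\sit$.

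The crux is then an effective rational underestimation of $\multprocess_{\test'}$ that sacrifices only a summable \emph{relative} amount. For each $\sit\in\sits$ and $x\in\posspace$ I would enumerate rationals $q_m$ with $\abs{q_m-\multprocess_{\test'}(\sit)(x)}<2^{-m}$ and take $\hat{\multprocess}(\sit)(x)\coloneqq q_m-2^{-m}$ for the first $m$ with $q_m-2^{-m}>(1-2^{-\abs{\sit}-2})(q_m+2^{-m})$; such an $m$ exists because $\multprocess_{\test'}(\sit)(x)>0$ while $2^{-m}\to0$, and the resulting value is a positive rational satisfying $(1-2^{-\abs{\sit}-2})\,\multprocess_{\test'}(\sit)(x)\leq\hat{\multprocess}(\sit)(x)\leq\multprocess_{\test'}(\sit)(x)$. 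Since $\hat{\multprocess}(\sit)\leq\multprocess_{\test'}(\sit)$ coordinatewise, increasingness~\ref{prop:coherence:increasingness} gives $\uex_{\frcstsystem(\sit)}(\hat{\multprocess}(\sit))\leq\uex_{\frcstsystem(\sit)}(\multprocess_{\test'}(\sit))\leq1$, so $\hat{\multprocess}$ is a recursive non-negative rational supermartingale multiplier for $\frcstsystem$; by \corollaryref{cor:from:D:to:M} the process $\mint[\hat{\multprocess}]$ is a recursive test supermartingale for $\frcstsystem$, and being a product of positive rationals it is positive and rational-valued, hence lies in $\comptests{\frcstsystem}$. Finally, along $\pth$,
\[
\mint[\hat{\multprocess}](\pthton)=\test'(\pthton)\prod_{k=0}^{n-1}\frac{\hat{\multprocess}(\pthtok)(\pthatkplus)}{\multprocess_{\test'}(\pthtok)(\pthatkplus)}\;\geq\;\test'(\pthton)\prod_{k=0}^{\infty}\bigl(1-2^{-k-2}\bigr),
\]
and the infinite product is a strictly positive constant (as $\sum_k 2^{-k-2}<\infty$), so $\mint[\hat{\multprocess}]$ is unbounded on $\pth$, contradicting the standing assumption. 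I expect the main obstacle to be precisely this last chain of inequalities: $\hat{\multprocess}$ must simultaneously be rational-valued, positive, coordinatewise below $\multprocess_{\test'}$ (so increasingness preserves $\uex_{\frcstsystem(\sit)}\leq1$), and accurate in a \emph{relative} sense with a summable error budget (so the telescoping product of ratios stays bounded away from $0$); the geometric tolerance $2^{-\abs{\sit}-2}$ is exactly what reconciles all of these requirements.
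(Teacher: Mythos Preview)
The paper does not prove this proposition; it simply imports it from \cite{sum2020persiau} (Proposition~6 there), so there is no ``paper's own proof'' to compare against. Your argument is nevertheless correct and is essentially the standard one: after making the computable witness strictly positive via $\test'=(\test+1)/2$, you pass to its multiplier process, take a recursive rational underapproximation $\hat{\multprocess}\leq\multprocess_{\test'}$ whose \emph{relative} error at level $\abs{\sit}$ is at most $2^{-\abs{\sit}-2}$, and then observe that the telescoping product of ratios $\prod_k \hat{\multprocess}(\pthtok)(\pthatkplus)/\multprocess_{\test'}(\pthtok)(\pthatkplus)\geq\prod_k(1-2^{-k-2})>0$, so $\mint[\hat{\multprocess}]$ inherits unboundedness from $\test'$. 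The only points worth making explicit are that $\multprocess_{\test'}$ is computable because $\test'$ is computable and uniformly bounded below by $\nicefrac{1}{2}$ (so the quotient $\test'(\sit x)/\test'(\sit)$ is effectively approximable), and that the unbounded search for the first admissible $m$ terminates because $\multprocess_{\test'}(\sit)(x)>0$; both are routine, and you already flagged them.
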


\begin{comment}
\begin{proposition}[{\cite[Corollary 20]{CoomanBock2021}}] \label{prop:ml:atleastone}
For any forecasting system~$\frcstsystem \in \frcstsystems$ there is at least one path that is \ml-random for~$\frcstsystem$.
\end{proposition}
\end{comment}

\begin{proposition}[{\cite[Proposition 10]{CoomanBock2021}}] \label{prop:ml:increasing}
Consider any path~$\pth \in \pths$ and any forecasting system~$\frcstsystem \in \frcstsystems$.
If $\pth$ is \ml-random for~$\frcstsystem$, then it is also \ml-random for any forecasting system~$\frcstsystem' \in \frcstsystems$ for which $\frcstsystem(\sit) \subseteq \frcstsystem'(\sit)$ for all~$\sit \in \sits$.
\end{proposition}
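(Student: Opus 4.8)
The plan is to reduce the statement to a single set inclusion between classes of test supermartingales, namely $\testsml{\frcstsystem'} \subseteq \testsml{\frcstsystem}$. Once this inclusion is established, the proposition follows at once from \definitionref{def:notionsofrandomness}: if $\pth$ is \ml-random for $\frcstsystem$, then no element of $\testsml{\frcstsystem}$ is unbounded on $\pth$, hence in particular no element of the subset $\testsml{\frcstsystem'}$ is unbounded on $\pth$, which is exactly what it means for $\pth$ to be \ml-random for $\frcstsystem'$.

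The core of the argument is the monotonicity of the allowability constraint in the forecast. Fix any situation $\sit \in \sits$; by assumption $\frcstsystem(\sit) \subseteq \frcstsystem'(\sit)$. For any gamble $\gamble \in \gambles$, a maximum over a smaller set cannot exceed a maximum over a larger one, so $\max_{p \in \frcstsystem(\sit)} \ex_p(\gamble) \leq \max_{p \in \frcstsystem'(\sit)} \ex_p(\gamble)$ (equivalently, $\uex_{\frcstsystem(\sit)}(\gamble) \leq \uex_{\frcstsystem'(\sit)}(\gamble)$). Consequently, if $\gamble$ is allowable for $\frcstsystem'(\sit)$, i.e.\ $\max_{p \in \frcstsystem'(\sit)} \ex_p(\gamble) \leq 0$, then it is also allowable for $\frcstsystem(\sit)$.

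Now take any $\test \in \testsml{\frcstsystem'}$. By definition $\test$ is a non-negative real process with $\test(\init) = 1$ that is lower semicomputable, and for every $\sit \in \sits$ the process difference $\adddelta \test(\sit)$ is allowable for $\frcstsystem'(\sit)$. By the observation above, $\adddelta \test(\sit)$ is then allowable for $\frcstsystem(\sit)$ as well, so $\test$ is a supermartingale for $\frcstsystem$; and since non-negativity, the normalisation $\test(\init) = 1$, and lower semicomputability are properties of the process $\test$ alone that make no reference to the forecasting system, $\test$ is in fact a lower semicomputable test supermartingale for $\frcstsystem$, i.e.\ $\test \in \testsml{\frcstsystem}$. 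This gives $\testsml{\frcstsystem'} \subseteq \testsml{\frcstsystem}$ and hence the proposition.

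There is no genuine obstacle here; the only point requiring a moment's care is to verify that the defining conditions of $\testsml{\cdot}$ separate cleanly into the supermartingale inequality, which is monotone in the forecast, and the remaining conditions (non-negativity, $\test(\init) = 1$, lower semicomputability), which are forecast-independent. I would also remark that the same argument, with ``lower semicomputable'' replaced by ``generated by a lower semicomputable multiplier process'' (using $\uex_{\frcstsystem(\sit)}(\multprocess(\sit)) \leq \uex_{\frcstsystem'(\sit)}(\multprocess(\sit)) \leq 1$) or by ``computable'', yields the corresponding monotonicity for weak Martin-Löf, computable and Schnorr randomness.
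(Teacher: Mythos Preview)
Your argument is correct. The paper does not supply its own proof of this proposition; it simply cites it as \cite[Proposition 10]{CoomanBock2021}. Your reduction to the inclusion $\testsml{\frcstsystem'} \subseteq \testsml{\frcstsystem}$ via the monotonicity $\uex_{\frcstsystem(\sit)}(\gamble) \leq \uex_{\frcstsystem'(\sit)}(\gamble)$ is exactly the standard route, and your observation that non-negativity, the normalisation $\test(\init)=1$, and lower semicomputability are forecast-independent is the only point that needs explicit mention. The closing remark about the analogous arguments for $\wml$, $\co$ and $\s$ is also accurate.
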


\section*{Additional material for Section~\ref{sec:question1}}

{\noindent\bfseries Proof of \corollaryref{cor:relations:interval}\quad}
\begin{comment}
Consider any path $\pth \in \pths$ and the selection process $\selection$ defined by $\selection(\sit)\coloneqq 1$ for every $\sit \in \sits$.
Since $\selection$ is clearly dense along $\pth$, it follows from \definitionref{def:churchrandom} that 
\begin{align*}
\min I \leq \liminf_{n \to \infty} \frac{1}{n}\sum_{k=0}^{n-1}\pthatkplus \leq \limsup_{n \to \infty} \frac{1}{n}\sum_{k=0}^{n-1}\pthatkplus \leq \max I
\end{align*}
for every $I \in \weakchurchintervals$.
Consequently, 
\end{comment}
Consider any path $\pth \in \pths$.
Since all six randomness notions that we are considering satisfy~\propertyref{proper:non-empty} and \ref{prop:law:large:numbers}, the interval forecasts~$\randomI \coloneqq \bigcap \randomintervals$ are well-defined and non-empty for all~$\random \in \set{\ml,\wml,\break\co,\s,\ch,\wch}$.
Moreover, since the sets~$\mathcal{I}_\random(\pth)$ satisfy the relations in \propositionref{prop:relations:interval}, their intersections $I_\random(\pth)$ satisfy the following inverse relations:
\begin{equation*}
\weakchurchI \
\begin{array}{l}
\vspace{3pt} \raisebox{-3 pt}{\rotatebox[origin=c]{20}{$\subseteq$}} \ \ \churchI \ \ \raisebox{-3 pt}{\rotatebox[origin=c]{-20}{$\subseteq$}} \\
\raisebox{3 pt}{\rotatebox[origin=c]{-20}{$\subseteq$}} \ \ \schnorrI \phantom{\s} \ \ \raisebox{3 pt}{\rotatebox[origin=c]{20}{$\subseteq$}}
\end{array}
\ \compI \ \subseteq \ \weakmlI \ \subseteq \ \mlI.
\end{equation*}
\qed \ \\
In what follows, it will be convenient to have the following terminology and notation at our disposal.
We call a recursive selection process~$\selection$ \emph{dense} along a path~$\pth \in \pths$ if $\lim_{n \to \infty}\selectionsum=\infty$.
For every path~$\pth \in \pths$, we collect the corresponding recursive dense selection processes in the set~$\selectionsdense$.
Similarly, for every path~$\pth \in \pths$, we collect the corresponding recursive dense temporal selection processes in the set~$\selectionsfdense$. \\

\begin{comment}
\begin{proposition}[{\cite[Theorem 23]{CoomanBock2021}}]\label{prop:church:nonstat}
Consider a computable forecasting system~$\frcstsystem \in \frcstsystems$, a computable gamble~$\gamble \in \gambles$ and a path~$\pth \in \pths$ that is Martin-L\"of random for~$\frcstsystem$.
For any recursive dense selection process~$\selection \in \selectionsdense$, it holds that
\begin{equation*}
\liminf_{n\to+\infty}
\dfrac{\sum_{k=0}^{n-1}\selection(\pthtok)\sqgroup[\big]{f(\pthatkplus)-\lex_{\frcstsystem(\pthtok)}(f)}}
{\sum_{k=0}^{n-1}\selection(\pthtok)}
\geq0.
\end{equation*}
\end{proposition}
\end{comment}

\begin{comment}
\begin{proposition}[{\cite[Theorem 24]{CoomanBock2021}}]\label{prop:church:time}
Consider a computable forecasting system~$\frcstsystem \in \frcstsystems$, a computable gamble~$\gamble \in \gambles$ and a path~$\pth \in \pths$ that is Martin-Löf random for~$\frcstsystem$.
For any recursive temporal selection process~$\selection$ such that $\lim_{n \to \infty}\selectionsum=\infty$, it holds that
\begin{equation*}
\liminf_{n\to+\infty}
\dfrac{\sum_{k=0}^{n-1}\selection(\pthtok)\sqgroup[\big]{f(\pthatkplus)-\lex_{\frcstsystem(\pthtok)}(f)}}
{\sum_{k=0}^{n-1}\selection(\pthtok)}
\geq0.
\end{equation*}
\end{proposition}
\end{comment}

%We call a recursive selection function $\selectionf \in \selectionsf$ dense along a path~$\pth \in \pths$ if $\lim_{n \to \infty}\selectionfsum=\infty$.
%For every path~$\pth \in \pths$, we collect the corresponding recursive dense selection functions in the set~$\selectionsfdense$.

{\noindent\bfseries Proof of \propositionref{prop:churches}\quad}
We start by proving that every path~$\pth \in \pths$ is \wch-random for the interval forecast~$\weakchurchI$.
To this end, consider the real numbers 
\begin{align*}
p &\coloneqq \inf_{\selection \in \selectionsfdense} \liminf_{n \to \infty} \frac{\sum_{k=0}^{n-1}\selection(\pthtok)\pthatkplus}{\sum_{k=0}^{n-1}\selection(\pthtok)}
\shortintertext{and}
q &\coloneqq \sup_{\selection \in \selectionsfdense} \limsup_{n \to \infty} \frac{\sum_{k=0}^{n-1}\selection(\pthtok)\pthatkplus}{\sum_{k=0}^{n-1}\selection(\pthtok)}.
\end{align*}
%By considering the selection process~$\selection$, defined by $\selection(\sit)=1$ for all~$\sit \in \sits$, we clearly see that the  set~$\selectionsfdense$ is non-empty.
%Consequently, the interval $\sqgroup{p,q}$ is non-empty.
By \definitionref{def:churchrandom}, it holds that $\pth$ is \wch-random for~$\sqgroup{p,q}$, and hence, $\weakchurchI \subseteq \sqgroup{p,q}$.
%again by \definitionref{def:churchrandom}, $\pth$ is weakly Church random for any interval forecast~$I \in \intervals$ if $\sqgroup{p,q} \subseteq I$.
We now show that $\weakchurchI=\sqgroup{p,q}$.
To this end, assume \emph{ex absurdo} that there is some~$I \in \weakchurchintervals$ such that $p < \min I$ or $\max I < q$.
%$\pth$ is not weakly Church random for any interval forecast~$I \in \intervals$ if $\sqgroup{p,q} \nsubseteq I$. 
%Assume \emph{ex absurdo} that $p < \min I$ or $\max I < q$.
Consequently, there is some recursive dense temporal selection process~$\selection \in \selectionsfdense$ such that 
\begin{align*}
\liminf_{n \to \infty}\frac{\selectionsum \pthatkplus}{\selectionsum} < \min I
\shortintertext{or}
\max I < \limsup_{n \to \infty}\frac{\selectionsum \pthatkplus}{\selectionsum},
\end{align*}
which is in contradiction with \definitionref{def:churchrandom}.
We conclude that $\pth$ is \wch-random for the interval forecast~$\weakchurchI = \sqgroup{p,q}$.
% an interval forecast~$I \in \intervals$ if and only if $\sqgroup{p,q} \subseteq I$.

\begin{comment}
Consequently, the set~$\weakchurchintervals$ consists of the interval forecasts~$I \in \intervals$ for which $\sqgroup{p,q} \subseteq I$, and hence, $\weakchurchI = \sqgroup{p,q} \neq \emptyset$.
By \corollaryref{cor:relations:interval}, it then holds that $\emptyset \neq \weakchurchI \subseteq \churchI$.
\end{comment}

To prove that every path~$\pth \in \pths$ is \ch-random for the interval forecast~$\churchI$, we consider the real numbers
\begin{align*}
p &\coloneqq \inf_{\selection \in \selectionsdense} \liminf_{n \to \infty} \frac{\sum_{k=0}^{n-1}\selection(\pthtok)\pthatkplus}{\sum_{k=0}^{n-1}\selection(\pthtok)}
\shortintertext{and}
q &\coloneqq \sup_{\selection \in \selectionsdense} \limsup_{n \to \infty} \frac{\sum_{k=0}^{n-1}\selection(\pthtok)\pthatkplus}{\sum_{k=0}^{n-1}\selection(\pthtok)}.
\end{align*}
An analogue argument shows that $\churchI = \sqgroup{p,q}$ and that $\pth$ is \ch-random for an interval forecast~$I \in \intervals$ if and only if $\sqgroup{p,q}\subseteq I$.
\qed \

{\noindent\bfseries Proof of \propositionref{prop:ml:almost:completely}\quad}
For ease of notation, let $p\coloneqq \liminf_{n \to \infty}\frcstsystem(\pthton)$ and~$q\coloneqq \limsup_{n \to \infty}\frcstsystem(\pthton)$.
By \theoremref{the:coinciding}, we know that $\mlI=\sqgroup{p,q}$.
By \propositionref{prop:aid}, we know that $\pth$ is \ml-random for any interval forecast~$I \in \intervals$ of the form 
\begin{equation*}
\sqgroup{p- \epsilon_1,q + \epsilon_2} \cap \sqgroup{0,1} \textrm{, with }\epsilon_1,\epsilon_2>0.
\end{equation*} 
Hence, $\pth$ is almost \ml-random for the interval forecast~$\mlI=\sqgroup{p,q}$.
%Hence, it remains to proof that $\pth$ is not Martin-Löf random for any interval forecast~$I \in \intervals$ for which $p < \min I$ or $\max I < q$.
%To this end, consider such an interval forecast~$I \in \intervals$ and assume \emph{ex absurdo} that $I \in \mlintervals$.
%By \propositionref{prop:relations:interval}, we then know that $I \in \churchintervals$, and therefore $p<\min \churchI$ or $\max \churchI < q$, a contradiction.
\qed

\section*{Proofs and additional material for Section~\ref{sec:question2}}

\begin{lemma} \label{lem:bounded:above}
Consider any non-negative gamble~$\gamble \in \gambles$ and any interval forecast~$I \subseteq \sqgroup{0,1}$ such that $0 < \max I$ and~$\min I <1$.
If $\uex_I(\gamble) \leq 1$, then $\gamble(1)\leq \nicefrac{1}{\max I}$ and~$\gamble(0)\leq \nicefrac{1}{(1-\min I)}$.
\end{lemma}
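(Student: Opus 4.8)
The plan is to unwind the definition of the upper expectation~$\uex_I$ and evaluate it at the two endpoints of~$I$, using non-negativity of~$\gamble$ to discard one of the two terms each time. Recall from \equationref{eq:uex:1} that $\uex_I(\gamble)=\max_{p \in I}\set{p\gamble(1)+(1-p)\gamble(0)}$, so in particular $p\gamble(1)+(1-p)\gamble(0)\leq\uex_I(\gamble)\leq1$ for every~$p \in I$; this is the only inequality we will need.

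First I would bound~$\gamble(1)$. Plugging in $p=\max I$, which lies in~$I$ and satisfies $0<\max I\leq 1$, we get $\max I\cdot\gamble(1)+(1-\max I)\gamble(0)\leq1$. Since $\gamble(0)\geq0$ and $1-\max I\geq0$, the second summand is non-negative, so $\max I\cdot\gamble(1)\leq1$, and dividing by $\max I>0$ yields $\gamble(1)\leq\nicefrac{1}{\max I}$. Symmetrically, plugging in $p=\min I\in I$, with $0\leq\min I<1$, gives $\min I\cdot\gamble(1)+(1-\min I)\gamble(0)\leq1$; now $\gamble(1)\geq0$ and $\min I\geq0$ make the first summand non-negative, so $(1-\min I)\gamble(0)\leq1$, and dividing by $1-\min I>0$ yields $\gamble(0)\leq\nicefrac{1}{(1-\min I)}$.

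There is essentially no obstacle here: the hypotheses $0<\max I$ and $\min I<1$ are exactly what is needed to divide safely, and non-negativity of~$\gamble$ is exactly what lets us drop the unwanted term. The only thing to be careful about is to note that~$\min I$ and~$\max I$ genuinely belong to~$I$ (they do, since~$I$ is a closed interval), so that the bound $p\gamble(1)+(1-p)\gamble(0)\leq\uex_I(\gamble)$ applies at those two values; alternatively one can invoke \equationref{eq:uex:2} directly, which already expresses $\uex_I(\gamble)$ as the max over precisely these two endpoints.
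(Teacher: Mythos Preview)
Your proof is correct and essentially identical to the paper's: both evaluate the upper expectation at the endpoints $\min I$ and $\max I$ (the paper via \equationref{eq:uex:2}, you via \equationref{eq:uex:1}), drop the unwanted non-negative term, and divide. The only cosmetic difference is that the paper phrases it contrapositively while you argue directly.
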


\begin{proof}
Since $0 < \max I$ and~$ \min I < 1$, it holds that both $\nicefrac{1}{\max I}$ and~$\nicefrac{1}{1-\min I}$ are real numbers.
By \equationref{eq:uex:2}, $\gamble(1) > \nicefrac{1}{\max I}$ implies that
\begin{equation*}
\uex_I(\gamble) \overset{\eqref{eq:uex:2}}{\geq} \max I \gamble(1)+(1-\max I) \gamble(0) \geq \max I \gamble(1) > 1.
\end{equation*}
Similarly, $\gamble(0) > \nicefrac{1}{(1-\min I)}$ implies that
\begin{equation*}
\uex_I(\gamble) \overset{\eqref{eq:uex:2}}{\geq} \min I \gamble(1)+(1-\min I) \gamble(0) \geq (1-\min I) \gamble(0) > 1.
\end{equation*}
\qed
\end{proof}

Consider any real process $\process$.
In the proof of the following lemma, we will use the following notation to relate $\process$ to its process difference $\adddelta \process$: $\adddelta \process(\sit) = \process(\sit \, \bullet) - \process(\sit)$ for all $\sit \in \sits$.

\begin{lemma} \label{lem:toandback:M:D}
Consider a forecasting system~$\frcstsystem \in \frcstsystems$ and a positive real process~$\supermartin$.
Then $\multprocess_\supermartin$ is positive, and~$\supermartin$ is a supermartingale for~$\frcstsystem$ if and only if $\multprocess_\supermartin$ is a real supermartingale multiplier for $\frcstsystem$.
%$\multprocess_\supermartin$ is a supermartingale multiplier for~$\frcstsystem$.
\end{lemma}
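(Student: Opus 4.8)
The plan is to reduce the equivalence to a single pointwise identity relating, at each situation, the process difference $\adddelta\supermartin(\sit)$ to the multiplier gamble $\multprocess_\supermartin(\sit)$, and then to transfer this identity through the upper expectation operator $\uex_{\frcstsystem(\sit)}$ by means of its coherence properties. Recall that $\supermartin$ being a supermartingale for $\frcstsystem$ means $\uex_{\frcstsystem(\sit)}(\adddelta\supermartin(\sit))\leq 0$ for all $\sit\in\sits$, whereas $\multprocess_\supermartin$ being a real supermartingale multiplier means $\uex_{\frcstsystem(\sit)}(\multprocess_\supermartin(\sit))\leq 1$ for all $\sit\in\sits$, so it suffices to compare these two inequalities situation by situation.

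First I would dispatch positivity: since $\supermartin$ is a positive real process, $\supermartin(\sit\,x)>0$ and $\supermartin(\sit)>0$ for every $\sit\in\sits$ and $x\in\posspace$, hence $\multprocess_\supermartin(\sit)(x)=\nicefrac{\supermartin(\sit\,x)}{\supermartin(\sit)}>0$ and $\multprocess_\supermartin$ is positive. Next, fixing any $\sit\in\sits$, I would rewrite the gamble $\supermartin(\sit\,\bullet)$ on $\posspace$ as $\supermartin(\sit\,\bullet)=\supermartin(\sit)\,\multprocess_\supermartin(\sit)$, which is immediate from the definition of $\multprocess_\supermartin$ and the fact that $\supermartin(\sit)\neq 0$; consequently
\begin{equation*}
\adddelta\supermartin(\sit)=\supermartin(\sit\,\bullet)-\supermartin(\sit)=\supermartin(\sit)\,\bigl[\multprocess_\supermartin(\sit)-1\bigr].
\end{equation*}
Applying non-negative homogeneity \ref{axiom:coherence:homogeneity} with the scalar $\supermartin(\sit)\geq 0$, and then constant additivity \ref{axiom:coherence:constantadditivity} with $\mu=-1$, yields
\begin{equation*}
\uex_{\frcstsystem(\sit)}(\adddelta\supermartin(\sit))=\supermartin(\sit)\,\bigl[\uex_{\frcstsystem(\sit)}(\multprocess_\supermartin(\sit))-1\bigr].
\end{equation*}
Since $\supermartin(\sit)>0$ strictly, the left-hand side has the same sign as $\uex_{\frcstsystem(\sit)}(\multprocess_\supermartin(\sit))-1$, so $\uex_{\frcstsystem(\sit)}(\adddelta\supermartin(\sit))\leq 0$ if and only if $\uex_{\frcstsystem(\sit)}(\multprocess_\supermartin(\sit))\leq 1$. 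Quantifying over all $\sit\in\sits$ then gives exactly the claimed equivalence.

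There is no genuine obstacle in this argument; the only points that warrant a line of justification are that non-negative homogeneity is legitimately invoked because the scalar $\supermartin(\sit)$ is non-negative (indeed positive) by the standing hypothesis, and that the final sign-transfer step uses the strict positivity $\supermartin(\sit)>0$. Both are guaranteed by the assumption that $\supermartin$ is a positive real process, so the lemma follows.
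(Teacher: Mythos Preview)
Your proposal is correct and follows essentially the same approach as the paper: both arguments dispatch positivity directly from the quotient definition, then fix $\sit\in\sits$, rewrite $\adddelta\supermartin(\sit)=\supermartin(\sit)[\multprocess_\supermartin(\sit)-1]$, and use non-negative homogeneity \ref{axiom:coherence:homogeneity} together with constant additivity \ref{axiom:coherence:constantadditivity} and the strict positivity of $\supermartin(\sit)$ to obtain the equivalence $\uex_{\frcstsystem(\sit)}(\adddelta\supermartin(\sit))\leq 0\iff\uex_{\frcstsystem(\sit)}(\multprocess_\supermartin(\sit))\leq 1$. The only cosmetic difference is that the paper applies \ref{axiom:coherence:constantadditivity} at the level of the inequality rather than inside the identity, but this is the same computation.
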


\begin{proof}
\begin{comment}
Assume that $\supermartin$ is positive. 
Clearly, $\multprocess_\supermartin(\sit)(x)=\frac{\supermartin(\sit x)}{\supermartin(\sit)}$ is positive as well for all~$\sit \in \sits$ and~$x \in \posspace$.
Vice versa, assume that $\multprocess_\supermartin$ is positive.
Clearly, $\supermartin(\sit)=\prod_{k=0}^{\abs{\sit}-1}\multprocess_\supermartin(\sittok)(\sitatkplus)$ is positive as well for all~$\sit \in \sits$.

We are left to prove the supermartingale property.
To this end, we 
\end{comment}
%We are left with proving the positivity of $\multprocess_\supermartin$.
Since $\supermartin$ is positive by assumption, $\multprocess_\supermartin(\sit)(x)=\frac{\supermartin(\sit x)}{\supermartin(\sit)}$ is clearly positive as well for all~$\sit \in \sits$ and~$x \in \posspace$.
Fix any~$\sit \in \sits$.
Note that
\begin{align*}
\uex_{\frcstsystem(\sit)}(\adddelta \supermartin(\sit))
&=
\uex_{\frcstsystem(\sit)}(\supermartin(\sit \, \bullet)-\supermartin(\sit))\\
&=
\uex_{\frcstsystem(\sit)}(\supermartin(\sit)\multprocess_\supermartin(\sit)-\supermartin(\sit))
\overset{\textrm{\ref{axiom:coherence:homogeneity}}}{=}
\supermartin(\sit) \uex_{\frcstsystem(\sit)}(\multprocess_\supermartin(\sit)-1),
\end{align*}
and therefore,
\begin{align*}
\uex_{\frcstsystem(\sit)}(\adddelta \supermartin(\sit))\leq 0
&\iff
\supermartin(\sit) \uex_{\frcstsystem(\sit)}(\multprocess_\supermartin(\sit)-1)\leq 0 \\
\overset{\supermartin(\sit)>0}&{\iff}
\uex_{\frcstsystem(\sit)}(\multprocess_\supermartin(\sit)-1)\leq 0 \\
\overset{\textrm{\ref{axiom:coherence:constantadditivity}}}&{\iff}
\uex_{\frcstsystem(\sit)}(\multprocess_\supermartin(\sit))\leq 1.
\end{align*} 
\qed
\end{proof}

In the proof of the following proposition, we will use the following terminology and notation.
For any situations $\sit,t \in \sits$, we write $\sit \precedes t$ when every path that goes through $t$ also goes through $\sit$, and we say that the situation~$\sit$ \emph{precedes} the situation $t$; so $\sit$ is a precursor of $t$.
We say that $\sit$ \emph{strictly precedes} $t$, and write $\sit \sprecedes t$, when $\sit \precedes t$ and~$\sit \neq t$.

\begin{proposition} \label{prop:aid}
For any~$\random \in \set{\ml,\wml,\co,\s,\ch,\wch}$, any path~$\pth \in \pths$ that is \random-random for a computable forecasting system~$\frcstsystem \in \frcstsystems$ and any~$\epsilon_1,\epsilon_2>0$:
\begin{equation*}
\Big[\liminf_{n \to \infty}\underline{\frcstsystem}(\pthton)-\epsilon_1,\limsup_{n \to \infty}\overline{\frcstsystem}(\pthton)+\epsilon_2\Big] \cap \sqgroup{0,1} \in \intervals_\random(\pth)
\end{equation*}
\end{proposition}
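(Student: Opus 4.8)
The plan is to first replace the target interval by a rational sub‑interval (so that computability questions become tractable), and then treat the ``frequentist'' notions $\ch,\wch$ and the martingale notions $\ml,\wml,\co,\s$ by two different mechanisms.

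\emph{Reduction.} Write $p\coloneqq\liminf_{n}\lfrcstsystem(\pthton)$, $q\coloneqq\limsup_{n}\ufrcstsystem(\pthton)$, and let $I_0\coloneqq\sqgroup{p-\epsilon_1,q+\epsilon_2}\cap\sqgroup{0,1}$ be the interval in the statement. Since $\epsilon_1,\epsilon_2>0$, I can pick rationals $a\le b$ with $\max\{0,p-\epsilon_1\}\le a\le p$ and $q\le b\le\min\{1,q+\epsilon_2\}$, arranging that $a<p$ unless $p=0$ (then $a=0$), $b>q$ unless $q=1$ (then $b=1$), and $0<b$, $a<1$. Then $J\coloneqq\sqgroup{a,b}$ is a computable interval with $J\subseteq I_0$; by the definitions of $\liminf$ and $\limsup$ there is some $N$ with $\frcstsystem(\pthton)\subseteq J$ for all $n\ge N$, and moreover $\lfrcstsystem(\pthton)>a$ eventually (unless $a=0$) and $\ufrcstsystem(\pthton)<b$ eventually (unless $b=1$). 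Since $J$ and $I_0$ are stationary with $J\subseteq I_0$, \propertyref{prop:monotone} gives $\pths_\random(J)\subseteq\pths_\random(I_0)$, so it suffices to prove $\pth\in\pths_\random(J)$.

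\emph{The notions $\ch,\wch$.} Fix a recursive (temporal, for $\wch$) selection process $\selection$ with $\lim_n\selectionsum=\infty$. From $\frcstsystem(\pthtok)\subseteq J$ for $k\ge N$ and $0\le\lfrcstsystem\le\ufrcstsystem\le 1$ one gets $\sum_{k=0}^{n-1}\selection(\pthtok)\lfrcstsystem(\pthtok)\ge a\selectionsum-N$ and $\sum_{k=0}^{n-1}\selection(\pthtok)\ufrcstsystem(\pthtok)\le b\selectionsum+N$; dividing by $\selectionsum\to\infty$ and adding the two inequalities of \definitionref{def:churchrandom} for $\frcstsystem$ termwise yields $\liminf_n\frac{\sum_{k}\selection(\pthtok)\pthatkplus}{\sum_{k}\selection(\pthtok)}\ge a$ and $\limsup_n\frac{\sum_{k}\selection(\pthtok)\pthatkplus}{\sum_{k}\selection(\pthtok)}\le b$. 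As this holds for every admissible $\selection$, $\pth$ is $\random$-random for the stationary forecasting system $J=\sqgroup{a,b}$.

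\emph{The notions $\co,\s$ (and the shape of the $\wml$ argument).} Suppose $\pth\notin\pths_\random(J)$; I convert a witnessing betting strategy against $J$ into one against $\frcstsystem$. Take a witness $\test\in\overline{\mathbb{T}}_\random(J)$ that is (computably, for $\s$) unbounded on $\pth$; by \propositionref{prop:equivalence:compandrec} (for $\co$) and a normalisation (for $\s$) it may be taken a positive recursive rational / computable test supermartingale, and by \lemmaref{lem:toandback:M:D} its multiplier process $\multprocess_\test$ is a positive supermartingale multiplier for $J$, so $\uex_J(\multprocess_\test(\sit))\le 1$, and by \lemmaref{lem:bounded:above} (using $0<b$, $a<1$) each $\multprocess_\test(\sit)(x)$ lies in the computable range $\sqgroup{0,\beta}$ with $\beta\coloneqq\max\{1/b,1/(1-a)\}$. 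Define a multiplier process $\multprocess$ for $\frcstsystem$ by $\multprocess(\sit)\coloneqq\multprocess_\test(\sit)/\max\{1,\uex_{\frcstsystem(\sit)}(\multprocess_\test(\sit))\}$: it is positive, $\le\multprocess_\test(\sit)$ pointwise, has $\uex_{\frcstsystem(\sit)}(\multprocess(\sit))\le 1$ by non-negative homogeneity (\ref{axiom:coherence:homogeneity}), and is computable because $\uex_{\frcstsystem(\sit)}(\multprocess_\test(\sit))$ is, by \eqref{eq:uex:2}, a maximum of two computable affine combinations of $\multprocess_\test(\sit)(0),\multprocess_\test(\sit)(1)$. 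By \lemmaref{lem:toandback:M:D}, $\mint[\multprocess]$ is a positive (recursive rational / computable) test supermartingale for $\frcstsystem$. Wherever $\frcstsystem(\sit)\subseteq J$ we have $\uex_{\frcstsystem(\sit)}(\multprocess_\test(\sit))\le\uex_J(\multprocess_\test(\sit))\le 1$, so $\multprocess(\sit)=\multprocess_\test(\sit)$ there; in particular $\multprocess(\pthtok)=\multprocess_\test(\pthtok)$ for all $k\ge N$, so for $n\ge N$
\[
\mint[\multprocess](\pthton)=\Big(\prod_{k<N}\multprocess(\pthtok)(\pthatkplus)\Big)\cdot\frac{\test(\pthton)}{\test(\pthto{N})}
\]
is a fixed positive multiple of $\test(\pthton)$ and hence (computably) unbounded on $\pth$, contradicting $\pth\in\pths_\random(\frcstsystem)$. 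For $\wml$ the same idea applies with $\multprocess_\test$ replaced by the lower semicomputable supermartingale multiplier $\multprocess'$ generating the witness, but since dividing a lower semicomputable quantity by the (also lower semicomputable) denominator need not stay lower semicomputable, I would instead set $\multprocess(\sit)\coloneqq\multprocess'(\sit)$ whenever a dovetailed search confirms ``$\lfrcstsystem(\sit)\ge a$ and $\ufrcstsystem(\sit)\le b$'' --- which, by the choice of $J$, is eventually confirmed along $\pth$ --- and $\multprocess(\sit)(x)\coloneqq\min\{1,\multprocess'(\sit)(x)\}$ otherwise; this is still a lower semicomputable supermartingale multiplier for $\frcstsystem$ and again agrees with $\multprocess'$ along $\pth$ eventually.

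\emph{Main obstacle.} The delicate case is $\ml$: a witnessing lower semicomputable test supermartingale for $J$ need not possess a lower semicomputable multiplier process (this is precisely the gap between $\ml$ and $\wml$), so neither the rescaling used for $\co,\s$ nor the capping used for $\wml$ obviously produces a lower semicomputable test supermartingale for $\frcstsystem$. I expect this to be handled by working instead with the universal lower semicomputable test supermartingale for the computable forecasting system $\frcstsystem$ (available through \corollaryref{cor:enumeration}), together with the uniform bounds $\multprocess_\test(\sit)(x)\le\beta$ and $\test(\sit)\le\beta^{\abs{\sit}}$ that \lemmaref{lem:bounded:above} supplies for every test supermartingale $\test$ for the computable interval $J$; this is the step I would expect to demand the most care.
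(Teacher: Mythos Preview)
Your treatment of $\ch$ and $\wch$ is correct and essentially the paper's. Your normalising construction $\multprocess(\sit)=\multprocess_\test(\sit)/\max\{1,\uex_{\frcstsystem(\sit)}(\multprocess_\test(\sit))\}$ for $\co$ is correct and a genuine alternative to what the paper does; with a little more care about the growth function it also covers $\s$. The dovetailed capping you sketch for $\wml$ is workable (modulo replacing ``$\geq a$'' and ``$\leq b$'' by strict inequalities, which is what a search can actually confirm; your choice of $J$ already arranges strictness along $\pth$).

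The genuine gap is $\ml$, and you say so yourself. Gesturing at a universal test for $\frcstsystem$ is not a proof, and in any case that object points the wrong way: you need to certify randomness for $I$, not for $\frcstsystem$. The difficulty you diagnosed is real---neither dividing nor capping a lower semicomputable quantity stays lower semicomputable---but the paper resolves it without any universality machinery, and the idea is worth internalising because it handles all four martingale notions at once.

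Instead of modifying multipliers, the paper damps the supermartingale itself by a \emph{recursive} factor. Using the computability of $\frcstsystem$, fix a precision $N$ and rationals $\underline r,\overline r$ so that the recursive test ``$\underline q(\sit,N)<\underline r$ or $\overline q(\sit,N)>\overline r$'' flags exactly those situations where (a fixed-precision approximation to) $\frcstsystem(\sit)$ falls outside a rational window sitting strictly inside $I$; let $\selection^*(\sit)$ be the number of flags along the path to $\sit$. With $K>1$ a rational bound on all admissible multipliers for $I$ (from \lemmaref{lem:bounded:above}), set $F^*(\sit)\coloneqq K^{-\selection^*(\sit)}$. Given any lower semicomputable positive $T\in\mltests{I}$, define $T^*(\sit)\coloneqq T(\sit)\,F^*(\sit)$. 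Because $F^*$ is recursive, positive and rational, $T^*$ is still lower semicomputable---this is precisely what your division and capping fail to guarantee. At flagged situations the multiplier of $T^*$ equals $\multprocess_T(\sit)/K\le 1$, so the supermartingale inequality for $\frcstsystem(\sit)$ is automatic; at unflagged situations the window forces $\frcstsystem(\sit)\subseteq I$, so $\multprocess_{T^*}(\sit)=\multprocess_T(\sit)$ inherits $\uex_{\frcstsystem(\sit)}\le\uex_I\le 1$. Along $\pth$ only finitely many situations are flagged, hence $T^*(\pthton)$ and $T(\pthton)$ differ by a fixed positive constant, and boundedness of $T^*$ (from $\pth\in\mlpths$) yields boundedness of $T$. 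The same $F^*$ serves verbatim for $\wml$, $\co$ and $\s$, replacing your three separate mechanisms by a single uniform one.
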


\begin{proof}
%For ease of notation, 
We start with the case $\random=\ml$.
Let $p\coloneqq \liminf_{n \to \infty}\underline{\frcstsystem}(\pthton)$ and~$q\coloneqq \limsup_{n \to \infty}\overline{\frcstsystem}(\pthton)$.
Consider a path~$\pth \in \pths$ that is \ml-random for the computable forecasting system~$\frcstsystem$; the proof is very similar for~$\random \in \set{\wml,\co,\s}$.
We will show that for any \(\epsilon_1,\epsilon_2 >0\), the path~$\pth$ is \ml-random for the interval forecast~$I\coloneqq \sqgroup{p-\epsilon_1,q+\epsilon_2} \cap \sqgroup{0,1}$.
%$\sqgroup{\max\{0,p-\epsilon\},\min\{1,q+\epsilon\}}$.
%$\sqgroup{p-\epsilon,q+\epsilon} \cap \sqgroup{0,1}$.
\begin{comment}
that \(\min \churchI \leq p+\epsilon\) for all~$\epsilon>0$.
\end{comment}
To this end, fix any rational numbers \(\underline{r}\) and \(\overline{r}\) such that $p-\frac{3}{4}\epsilon_1 < \underline{r} < p-\frac{1}{2}\epsilon_1$ and  $q+\frac{1}{2}\epsilon_2 < \overline{r} < q+\frac{3}{4}\epsilon_2$, and a natural number \(N\) such that \(2^{-N}<\frac{1}{4} \min\{\epsilon_1,\epsilon_2\}\).
Since \(\frcstsystem\) is a computable forecasting system, there are two recursive rational maps \(\underline{q},\overline{q}\colon \sits \times \naturalswithzero \to \rationals\) such that \(\abs{\underline{\frcstsystem}(\sit)-\underline{q}(\sit,n)}< 2^{-n}\) and \(\abs{\overline{\frcstsystem}(\sit)-\overline{q}(\sit,n)} < 2^{-n}\) for all~$\sit \in \sits$ and~$n \in \naturalswithzero$.
Consequently, for all~$\sit \in \sits$,
\begin{align}
\abs{\underline{\frcstsystem}(\sit)-\underline{q}(\sit,N)} <  2^{-N} < \frac{1}{4}\epsilon_1 \label{eq:aid:11}
\shortintertext{and}
\abs{\overline{\frcstsystem}(\sit)-\overline{q}(\sit,N)} <  2^{-N} < \frac{1}{4}\epsilon_2. \label{eq:aid:22}
\end{align}
To show that $\pth$ is \ml-random for~$I$, and in line with \propositionref{prop:mlrandomness}, we fix any lower semicomputable positive test supermartingale~$\test \in \testsml{I}$ and show that it remains bounded on $\pth$.
Since $\test$ is lower semicomputable, there is some recursive rational map~$q\colon \sits \times \naturalswithzero \to \rationals$ such that
\begin{enumerate}[label=\upshape(\roman*),leftmargin=*,noitemsep,topsep=0pt]
\item $q(\sit,n+1) \geq q(\sit,n)$ for all~$\sit \in \sits$ and~$n \in \naturalswithzero$;
\item $\test(\sit)=\lim_{n \to \infty}q
(\sit,n)$ for all~$\sit \in \sits$.
\end{enumerate}
We will suitably adapt this recursive rational map to end up with a lower semicomputable positive test supermartingale for~$\frcstsystem$.
To this end, consider the selection process~$\selection'$ defined by
\begin{equation*}
\selection'(\sit) \coloneqq \begin{cases}
1 &\textrm{if } \underline{q}(\sit,N) < \underline{r} \textrm{ or } \overline{r} < \overline{q}(\sit,N)\\
0 &\textrm{otherwise}
\end{cases}
\textrm{ for all } \sit \in \sits.
\end{equation*}
This selection process expresses for every situation~$\sit \in \sits$ whether $\underline{r} \leq \underline{q}(\sit,N)$ and $\overline{q}(\sit,N) \leq \overline{r}$, or not.
Since \(\underline{r}\) and~$\overline{r}$ are rational numbers, \(N\) is a natural number, and \(\underline{q}\) and \(\overline{q}\) are recursive rational maps, it follows that the inequalities in the above expression are decidable for every~$\sit \in \sits$, and hence, the selection process \(\selection'\) is recursive.
We use the selection process~$\selection'$ to introduce the process~$\selection^\ast$ defined by
\begin{equation*}
\selection^\ast(\sit)\coloneqq\sum_{k=0}^{n-1} \selection'(\xvaltok) \textrm{ for all } \sit=(\xvaltolong) \in \sits,
\end{equation*}
which expresses for every situation~$\sit \in \sits$ how many times $\underline{q}(t,N)<\underline{r}$ or $\overline{r}<\overline{q}(t,N)$ for all strictly preceding situations $t \sprecedes s$.
Since $\selection'$ is recursive and natural, and since for every~$\sit \in \sits$ the finite number of situations $t \in \sits$ for which $t \sprecedes \sit$ can be recursively enumerated, the process~$\selection^\ast$ is recursive and natural.

Since $0 \leq \min I < \max I \leq 1$, it follows that $ 0 < \max I$ and $\min I <1$, and hence, by \lemmaref{lem:bounded:above}, we can fix a rational $K>1$ such that for any positive gamble~$\gamble \in \gambles$ for which $\uex_I(\gamble)\leq 1$, it holds that $\gamble \leq K$.
Consequently, $\nicefrac{\gamble}{K}\leq 1$, and hence, by \ref{axiom:coherence:bounds}, $\uex_{\frcstsystem(\sit)}(\nicefrac{\gamble}{K}) \leq 1$ for all~$\sit \in \sits$.

We introduce a new process~$\process^\ast$ defined by
\begin{equation*}
\process^\ast(\sit) \coloneqq \bigg(\frac{1}{K}\bigg)^{\selection^\ast(\sit)}
\textrm{ for all } \sit \in \sits.
\end{equation*}
Since $K$ is rational and since the process~$\selection^{\ast}$ is recursive and natural, it follows that the process~$\process^\ast$ is recursive and rational.
Furthermore, since $K$ is positive and since the process~$\selection^\ast$ is natural, the process~$\process^\ast$ is positive as well.
Moreover, note that $\process^\ast(\init)=(\nicefrac{1}{K})^{\selection^\ast(\init)}=(\nicefrac{1}{K})^0=1$.

Consider the map~$q^\ast\colon \sits \times \naturalswithzero \to \rationals$ defined by
\begin{equation}
q^\ast(\sit,n)=q(\sit,n)\process^\ast(\sit) \textrm{ for all } \sit \in \sits \textrm{ and } n \in \naturalswithzero.
\end{equation}
Since $q(\sit,n)$ is a recursive rational map and since the process~$\process^\ast$ is recursive and rational, the map~$q^\ast(\sit,n)$ is recursive and rational as well.
Due to the positivity of $\process^\ast$, $q^\ast(\sit,n+1) \geq q^\ast(\sit,n)$ for all~$\sit \in \sits$ and~$n \in \naturalswithzero$, because $q(\sit,n+1) \geq q(\sit,n)$.
Consequently, the real process~$\test^\ast$ defined by
\begin{equation*}
\test^\ast(\sit)
\coloneqq
\lim_{n\to\infty} q^\ast(\sit,n) \textrm{ for all } \sit \in \sits,
\end{equation*}
is lower semicomputable.
Note that, for all~$\sit \in \sits$,
\begin{equation}
\test^\ast(\sit)
=
\lim_{n\to\infty} q^\ast(\sit,n)
=
\lim_{n\to\infty} q(\sit,n) \process^\ast(\sit)
=
\test(\sit) \process^\ast (\sit). \label{eq:mult}
\end{equation}
Since $\test$ and $\process^\ast$ are both real processes, $\test^\ast$ is indeed a real process.
Let us now show that $\test^\ast$ is a lower semicomputable positive test supermartingale for~$\frcstsystem$.
First, since $\test$ and~$\process^\ast$ are positive, it follows from \equationref{eq:mult} that $\test^\ast$ is positive as well.
Second, since $\test(\init)=1$ and~$\process^\ast (\init)=1$, it follows from \equationref{eq:mult} that $\test^\ast(\init)=1$.
Last, to show that it is a supermartingale for~$\frcstsystem$, we fix any~$\sit \in \sits$ and, in accordance with \lemmaref{lem:toandback:M:D}, prove that $\uex_{\frcstsystem(\sit)}(\multprocess_{\test^\ast}(\sit)) \leq 1$.
To this end, note that
\begin{align}
\multprocess_{\test^\ast}(\sit)(x)
=
\frac{\test^\ast(\sit x)}{\test^\ast(\sit)}
\overset{\eqref{eq:mult}}{=}
\frac{\test(\sit x)\process^\ast(\sit x)}{\test(\sit) \process^\ast(\sit)} 
&=
\frac{\test(\sit x)}{\test(\sit)} \bigg(\frac{1}{K}\bigg)^{\selection^\ast(\sit x)-\selection^\ast(\sit)} \nonumber \\
&=
\frac{\test(\sit x)}{\test(\sit)} \bigg(\frac{1}{K}\bigg)^{\selection'(\sit)} \textrm{ for all } x \in \posspace. \label{eq:decrease:Dm}
\end{align}
If $\underline{q}(\sit,N)<\underline{r}$ or $\overline{r}<\overline{q}(\sit,N)$, then $\selection'(\sit)=1$, and hence,
\begin{equation*}
\multprocess_{\test^\ast}(\sit)(x)
\overset{\eqref{eq:decrease:Dm}}{=}
\frac{\test(\sit x)}{\test(\sit)} \bigg(\frac{1}{K}\bigg)^{\selection'(\sit)}
=
\frac{\test(\sit x)}{\test(\sit)} \frac{1}{K}
=
\frac{\multprocess_\test(\sit)(x)}{K} \textrm{ for all } x \in \posspace.
\end{equation*}
Since $\test$ is a positive supermartingale for~$I$, it follows from \lemmaref{lem:toandback:M:D} that $\multprocess_\test$ is a positive supermartingale multiplier for~$I$, i.e., $\multprocess_\test(t)>0$ and~$\uex_I(\multprocess_\test(t))\leq 1$ for all~$t \in \sits$.
By recalling that $\uex_{\frcstsystem(\sit)}(\nicefrac{\gamble}{K}) \leq 1$ if $\gamble>0$ and~$\uex_I(\gamble)\leq 1$, it immediately follows that
\begin{equation*}
\uex_{\frcstsystem(\sit)}(\multprocess_{\test^\ast}(\sit))=\uex_{\frcstsystem(\sit)}(\nicefrac{\multprocess_{\test}(\sit)}{K}) \leq 1.
\end{equation*}
\begin{comment}
\begin{align*}
\adddelta \test^\ast(\sit)
&=
\test^\ast(\sit)\bigg(\frac{\test^\ast(\sit \, \bullet)}{\test^\ast(\sit)}-1\bigg)
=
\test^\ast(\sit)\bigg(\frac{\test(\sit \, \bullet)\process^\ast(\sit \, \bullet)}{\test(\sit)\process^\ast(\sit)}-1\bigg)\\
&=
\test^\ast(\sit)\bigg(\frac{\test(\sit \, \bullet)\process^\ast(\sit)}{\test(\sit)\process^\ast(\sit)}\frac{1}{K}-1\bigg)
=
\test^\ast(\sit)\bigg(\frac{\test(\sit \, \bullet)}{\test(\sit)}\frac{1}{K}-1\bigg)
\leq
\test(\sit)\bigg(\frac{\test(\sit \, \bullet)}{\test(\sit)}\frac{1}{K}-1\bigg)
\end{align*}
\end{comment}
Otherwise, if $\underline{r} \leq \underline{q}(\sit,N)$ and~$\overline{q}(\sit,N) \leq \overline{r}$, then $\selection'(\sit)=0$, and hence,
\begin{equation}
\multprocess_{\test^\ast}(\sit)(x)
\overset{\eqref{eq:decrease:Dm}}{=}
\frac{\test(\sit x)}{\test(\sit)} \bigg(\frac{1}{K}\bigg)^{\selection'(\sit)}
=
\frac{\test(\sit x)}{\test(\sit)}
=
\multprocess_\test(\sit)(x) \textrm{ for all } x \in \posspace. \label{eq:equal:Dm}
\end{equation}
Moreover, since $\underline{r} \leq \underline{q}(\sit,N)$ and~$\overline{q}(\sit,N) \leq \overline{r}$, it then holds that
\begin{align*}
p-\frac{3}{4}\epsilon_1< \underline{r} \leq  \underline{q}(\sit,N) \overset{\eqref{eq:aid:11}}{<} \underline{\frcstsystem}(\sit)+\frac{1}{4}\epsilon_1
\shortintertext{and}
q+\frac{3}{4}\epsilon_2 > \overline{r} \geq  \overline{q}(\sit,N) \overset{\eqref{eq:aid:22}}{>} \overline{\frcstsystem}(\sit)-\frac{1}{4}\epsilon_2,
\end{align*}
and therefore, $p-\epsilon_1 < \underline{\frcstsystem}(\sit)$ and~$\overline{\frcstsystem}(\sit)<q+\epsilon_2$.
Consequently, $\frcstsystem(\sit) \subseteq I$, and therefore it follows from Equation~\eqref{eq:uex:1} that 
\begin{equation*}
\uex_{\frcstsystem(\sit)}(\multprocess_{\test^\ast}(\sit))
\overset{\eqref{eq:equal:Dm}}{=}
\uex_{\frcstsystem(\sit)}(\multprocess_\test(\sit))
\overset{\eqref{eq:uex:1}}{\leq}
\uex_{I}(\multprocess_\test(\sit))\leq 1,
\end{equation*}
where the last inequality holds because $\multprocess_\test$ is a supermartingale multiplier for~$I$.

We conclude that $\test^\ast$ is a lower semicomputable positive test supermartingale for~$\frcstsystem$.
Since $\pth$ is Martin-Löf random for~$\frcstsystem$ by assumption, this implies that $\limsup_{n \to \infty}\test^\ast(\pthton)<\infty$.

Now, for any $n \in \naturalswithzero$, if $\underline{q}(\pthton,N)<\underline{r}$, then 
\begin{equation*}
\underline{\frcstsystem}(\pthton)\overset{\eqref{eq:aid:11}}{<}\underline{q}(\pthton,N)+\frac{1}{4}\epsilon_1<\underline{r}+\frac{1}{4}\epsilon_1<p-\frac{1}{2}\epsilon_1+\frac{1}{4}\epsilon_1 = p-\frac{1}{4}\epsilon_1.
\end{equation*}
Similarly, if $\overline{r}<\overline{q}(\pthton,N)$, then 
\begin{equation*}
\overline{\frcstsystem}(\pthton)\overset{\eqref{eq:aid:22}}{>}\overline{q}(\pthton,N)-\frac{1}{4}\epsilon_2>\overline{r}-\frac{1}{4}\epsilon_2>q+\frac{1}{2}\epsilon_2-\frac{1}{4}\epsilon_2 = q+\frac{1}{4}\epsilon_2.
\end{equation*}
By recalling that $p\coloneqq \liminf_{n \to \infty}\underline{\frcstsystem}(\pthton)$ and~$q\coloneqq \limsup_{n \to \infty}\overline{\frcstsystem}(\pthton)$, it is clear that there are only a finite number of natural numbers $n\in \naturalswithzero$ for which $\underline{\frcstsystem}(\pthton)<p-\frac{1}{4}\epsilon_1$ or $q+\frac{1}{4}\epsilon_2<\overline{\frcstsystem}(\pthton)$, and hence, there is only a finite number of natural numbers $n \in \naturalswithzero$ for which $\underline{q}(\pthton,N)<\underline{r}$ or $\overline{r}<\overline{q}(\pthton,N)$, or, equivalently, for which $\selection'(\pthton)=1$.
Consequently, there is some~$B \in \naturalswithzero$ such that $\lim_{n \to \infty}\selection^\ast(\pthton)=\lim_{n \to \infty} \smash{\sum_{k=0}^{n-1}\selection'(\pthtok)}=B<\infty$, and hence,
\begin{equation*}
\limsup_{n \to \infty}\test^\ast(\pthton)
=
\limsup_{n \to \infty}\test(\pthton)\process^\ast(\pthton)
=
\frac{1}{K^B} \limsup_{n \to \infty}\test(\pthton).
\end{equation*}
Since $\limsup_{n \to \infty}\test^\ast(\pthton)<\infty$, this implies that $\limsup_{n \to \infty}\test(\pthton)<\infty$.

We will proceed by proving that if a path~$\pth \in \pths$ is \random-random for the computable forecasting system $\frcstsystem$, for any $\random \in \set{\wml,\co,\s,\ch,\wch}$, then it is also \random-random for the interval forecast $I$.
We thus consider, without loss of generality, the same forecasting system $\frcstsystem$ and the same interval forecast $I$ as before.
Moreover, we also consider the same numbers $\epsilon_1,\epsilon_2, N$ and $K$.
This allows us to reuse the recursive selection process $\selection'$, the recursive natural process $\selection^\ast$ and the recursive rational process $\process^\ast$, since they only depend on the mathematical objects that we mentioned in this paragraph.
%Before we tackle the proof of \propositionref{prop:aid} for $\random \in \set{\wml,\co,\s,\ch,\wch}$, we remark that  dict

We continue by sketching the proof for~$\random = \wml$.
Consider a path~$\pth \in \pths$ that is \wml-random for the computable forecasting system~$\frcstsystem$.
To show that $\pth$ is \wml-random for~$I$, and in line with \definitionref{def:notionsofrandomness}, we fix any test supermartingale $\mint \in \testswml{I}$ that is generated by a lower semicomputable multiplier process~$\multprocess$ and show that it remains bounded on~$\pth$.
Since $\multprocess$ is lower semicomputable, there is some recursive rational map~$q\colon \sits \times \posspace \times \naturalswithzero \to \rationals$ such that
\begin{enumerate}[label=\upshape(\roman*),leftmargin=*,noitemsep,topsep=0pt]
\item $q(\sit,x,n+1) \geq q(\sit,x,n)$ for all~$\sit \in \sits$, $x \in \posspace$ and~$n \in \naturalswithzero$;
\item $\multprocess(\sit)(x)=\lim_{n \to \infty}q(\sit,x,n)$ for all~$\sit \in \sits$ and~$x \in \posspace$.
\end{enumerate}
We will suitably adapt this recursive rational map to end up with a test supermartingale for~$\frcstsystem$ that is generated by a lower semicomputable multiplier process.
To this end, we consider the recursive rational map~$q^\ast\colon \sits\times \posspace \times \naturalswithzero \to \rationals$, defined by
\begin{equation*}
q^\ast(\sit,x,n) \coloneqq q(\sit,x,n)\bigg(\frac{1}{K}\bigg)^{\selection'(\sit)} \textrm{ for all } \sit \in \sits\textrm{, } x \in \posspace \textrm{ and } n \in \naturalswithzero,
\end{equation*}
and the related multiplier process~$\multprocess^\ast$ defined by $\multprocess^\ast(\sit)(x) \coloneqq \lim_{n \to \infty} q^\ast(\sit,x,n)$ for all~$\sit \in \sits$ and~$x \in \posspace$.
By using a similar line of reasoning as above, it is easy to verify that $\multprocess^\ast$ is a lower semicomputable multiplier process and that ${\multprocess^\ast}^\circledcirc$ is a test supermartingale for~$\frcstsystem$.
%that is generated by the lower semicomputable multiplier process~$\multprocess^\ast$.
Since $\pth$ is \wml-random for~$\frcstsystem$ by assumption, it holds that $\limsup_{n \to \infty}{\multprocess^\ast}^\circledcirc(\pthton)<\infty$.
By using a similar argument as before, there is some $B \in \naturalswithzero$ such that $\limsup_{n \to \infty}\multprocess^\circledcirc(\pthton)=K^B\limsup_{n \to \infty}{\multprocess^\ast}^\circledcirc(\pthton)$, and hence, $\limsup_{n \to \infty}\multprocess^\circledcirc(\pthton)<\infty$.

If $\random=\co$, then we consider a path~$\pth \in \pths$ that is \co-random for the computable forecasting system~$\frcstsystem$.
To show that $\pth$ is \co-random for~$I$, and in line with \definitionref{def:notionsofrandomness}, we fix any computable test supermartingale~$\test \in \testscomp{I}$ and show that it remains bounded on~$\pth$.
%\cite[Proposition 6]{sum2020persiau}.
Since $\test$ is computable, there is some recursive rational map~$q\colon \sits \times \naturalswithzero \to \rationals$ such that $\abs{\test(\sit)-q(\sit,n)}<2^{-n}$ for all~$\sit \in \sits$ and~$n \in \naturalswithzero$.
We will suitably adapt this recursive rational map to end up with a computable test supermartingale for~$\frcstsystem$.
To this end, we consider the recursive rational map~$q^\ast \colon \sits \times \naturalswithzero \to \rationals$, defined by $q^\ast(\sit,n)\coloneqq q(\sit,n) \process^\ast(\sit)$ for all~$\sit \in \sits$ and~$n \in \naturalswithzero$, and the real process~$\test^\ast\colon \sits \to \reals$, defined by $\test^\ast(\sit)\coloneqq \lim_{n \to \infty} q^\ast(\sit,n)= \lim_{n \to \infty} q(\sit,n) \process^\ast(\sit) = \test(\sit) \process^\ast (\sit)$ for all~$\sit \in \sits$.
By recalling that $K>1$ and that $\selection^\ast$ is a recursive natural process, it follows that 
\begin{equation*}
\abs{\test^\ast(\sit)-q^\ast(\sit,n)}=\bigg(\frac{1}{K}\bigg)^{\selection^\ast(\sit)}\abs{\test(\sit)-q(\sit,n)} \leq \abs{\test(\sit)-q(\sit,n)} < 2^{-n}
\end{equation*} 
for all $\sit \in \sits$ and $n \in \naturalswithzero$, and hence, $\test^\ast$ is a computable real process.
By using a similar line of reasoning as above, it is easy to verify that $\test^\ast$ is in fact a computable test supermartingale for~$\frcstsystem$.
Since $\pth$ is \co-random for~$\frcstsystem$ by assumption, it holds that $\limsup_{n \to \infty}\test^\ast(\pthton)<\infty$.
By using a similar argument as before, there is some $B \in \naturalswithzero$ such that $\limsup_{n \to \infty}\test(\pthton)=K^B\limsup_{n \to \infty}\test^\ast(\pthton)$, and hence, $\limsup_{n \to \infty}\test(\pthton)<\infty$.

If $\random=\s$, then we consider a path~$\pth \in \pths$ that is \s-random for the computable forecasting system~$\frcstsystem$.
To show that $\pth$ is \s-random for~$I$, and in line with \definitionref{def:schnorr}, we intend to prove that every computable test supermartingale for $I$ remains computably bounded on~$\pth$.
Without loss of generality, we consider the computable test supermartingales~$\test \in \testsschnorr{I}$ and $\test^\ast \in \testsschnorr{\frcstsystem}$ that were introduced in the previous paragraph.
Assume \emph{ex absurdo} that $\test$ is computably unbounded on $\pth$, meaning that there is some real growth function $\realgrowth$ such that $\limsup_{n \to \infty}(\test(\pthton)-\realgrowth(n)) \geq 0$.
%Consider the computable test supermartingale $\test^\ast$ that was introduced in the previous paragraph.
%Since $\test$ is computable, there is some recursive rational map~$q\colon \sits \times \naturalswithzero \to \rationals$ such that $\abs{\test(\sit)-q(\sit,n)}<2^{-n}$ for all~$\sit \in \sits$ and~$n \in \naturalswithzero$.
%We will suitably adapt this recursive rational map to end up with a computable test supermartingale for~$\frcstsystem$.
%To this end, we consider the recursive rational map~$q^\ast \colon \sits \times \naturalswithzero \to \rationals$ defined by $q^\ast(\sit,n)\coloneqq q(\sit,n) \process^\ast(\sit)$ for all~$\sit \in \sits$ and~$n \in \naturalswithzero$.
%It is easy to verify that the real process~$\test^\ast\colon \sits \to \reals$, defined by $\test^\ast(\sit)\coloneqq \lim_{n \to \infty} q(\sit,n) \process^\ast(\sit) = \test(\sit) \process^\ast (\sit)$ for all~$\sit \in \sits$, is a computable test supermartingale for~$\frcstsystem$.
By using a similar argument as before, there is some $B \in \naturalswithzero$ such that $\limsup_{n \to \infty}\test(\pthton)=K^B\limsup_{n \to \infty}\test^\ast(\pthton)$.
Moreover, we consider the real growth function $\tau^\ast$ defined by $\tau^\ast(n) \coloneqq \nicefrac{\tau(n)}{K^B}$ for all~$n \in \naturalswithzero$.
Since $\pth$ is Schnorr random for~$\frcstsystem$ by assumption, it holds that $\limsup_{n \to \infty}(\test^\ast(\pthton)-\tau^\ast(n)) < 0$, and hence, $\limsup_{n \to \infty}(\test(\pthton)-\tau(n))=K^B\limsup_{n \to \infty}(\test^\ast(\pthton)-\tau^\ast(n)) < 0$, a contradiction.

If $R\in\set{\ch,\wch}$, then we consider a path~$\pth \in \pths$ that is \ch-random (\wch-random) for the computable forecasting system~$\frcstsystem$.
In this case, to show that $\pth$ is \ch-random (\wch-random) for~$I$, the line of reasoning differs. 
We consider any recursive dense (temporal) selection process~$\selection$, and, according to \definitionref{def:churchrandom}, intend to show that 
\begin{equation*}
\min I
\leq
\liminf_{n \to \infty} \frac{\sum_{k=0}^{n-1}\selection(\pthtok)\pthatkplus}{\sum_{k=0}^{n-1}\selection(\pthtok)} \leq
\limsup_{n \to \infty} \frac{\sum_{k=0}^{n-1}\selection(\pthtok)\pthatkplus}{\sum_{k=0}^{n-1}\selection(\pthtok)}
\leq
\max I.
\end{equation*}
By using a similar argument as before, we know there is some~$B \in \naturalswithzero$ such that $\lim_{n \to \infty}\selection^\ast(\pthton)=\lim_{n \to \infty} \smash{\sum_{k=0}^{n-1}\selection'(\pthtok)=B<\infty}$.
Consequently, there is some~$M \in \naturalswithzero$ such that $\selection'(\pthton)=0$ for all~$n \geq M$, and hence, $\frcstsystem(\pthton) \subseteq I$ for all~$n\geq M$.
Since $\pth$ is \ch-random (\wch-random) for~$\frcstsystem$ by assumption, it holds by \definitionref{def:churchrandom} that
\begin{align*}
\liminf_{n \to \infty} \frac{\sum_{k=0}^{n-1}\selection(\pthtok)[\pthatkplus-\underline{\frcstsystem}(\pthtok)]}{\sum_{k=0}^{n-1}\selection(\pthtok)} \geq 0
\shortintertext{and}
\limsup_{n \to \infty} \frac{\sum_{k=0}^{n-1}\selection(\pthtok)[\pthatkplus-\overline{\frcstsystem}(\pthtok)]}{\sum_{k=0}^{n-1}\selection(\pthtok)} \leq 0.
\end{align*}
Since $\lim_{n \to \infty} \selectionsum = \infty$, it follows that 
\begin{align*}
\liminf_{n \to \infty} \frac{\sum_{k=M}^{n-1}\selection(\pthtok)[\pthatkplus-\underline{\frcstsystem}(\pthtok)]}{\sum_{k=M}^{n-1}\selection(\pthtok)} \geq 0
\shortintertext{and}
\limsup_{n \to \infty} \frac{\sum_{k=M}^{n-1}\selection(\pthtok)[\pthatkplus-\overline{\frcstsystem}(\pthtok)]}{\sum_{k=M}^{n-1}\selection(\pthtok)} \leq 0,
\end{align*}
and hence,
\begin{multline*}
\liminf_{n \to \infty} \frac{\sum_{k=M}^{n-1}\selection(\pthtok)[\pthatkplus-\min I]}{\sum_{k=M}^{n-1}\selection(\pthtok)} \\
\geq
\liminf_{n \to \infty} \frac{\sum_{k=M}^{n-1}\selection(\pthtok)[\pthatkplus-\underline{\frcstsystem}(\pthtok)]}{\sum_{k=M}^{n-1}\selection(\pthtok)}
\geq 0 \\
\textrm{and} \hfill  \\
\shoveleft \limsup_{n \to \infty} \frac{\sum_{k=M}^{n-1}\selection(\pthtok)[\pthatkplus-\max I)]}{\sum_{k=M}^{n-1}\selection(\pthtok)} \\
\leq
\limsup_{n \to \infty} \frac{\sum_{k=M}^{n-1}\selection(\pthtok)[\pthatkplus-\overline{\frcstsystem}(\pthtok)]}{\sum_{k=M}^{n-1}\selection(\pthtok)}
\leq 0.
\end{multline*}
\begin{comment}
\begin{multline*}
\liminf_{n \to \infty} \frac{\sum_{k=M}^{n-1}\selection(\pthtok)[\pthatkplus-(p-\frac{1}{4}\epsilon_1)]}{\sum_{k=M}^{n-1}\selection(\pthtok)} \\
\geq
\liminf_{n \to \infty} \frac{\sum_{k=M}^{n-1}\selection(\pthtok)[\pthatkplus-\underline{\frcstsystem}(\pthtok)]}{\sum_{k=M}^{n-1}\selection(\pthtok)}
\geq 0
\end{multline*}
and
\begin{multline*}
\limsup_{n \to \infty} \frac{\sum_{k=M}^{n-1}\selection(\pthtok)[\pthatkplus-(q+\frac{1}{4}\epsilon_2))]}{\sum_{k=M}^{n-1}\selection(\pthtok)} \\
\leq
\limsup_{n \to \infty} \frac{\sum_{k=M}^{n-1}\selection(\pthtok)[\pthatkplus-\overline{\frcstsystem}(\pthtok)]}{\sum_{k=M}^{n-1}\selection(\pthtok)}
\leq 0.
\end{comment}
Consequently,
\begin{equation*}
\min I
\leq
\liminf_{n \to \infty} \frac{\sum_{k=M}^{n-1}\selection(\pthtok)\pthatkplus}{\sum_{k=M}^{n-1}\selection(\pthtok)} \leq
\limsup_{n \to \infty} \frac{\sum_{k=M}^{n-1}\selection(\pthtok)\pthatkplus}{\sum_{k=M}^{n-1}\selection(\pthtok)}
\leq
\max I,
\end{equation*}
and therefore also, since $\lim_{n \to \infty} \selectionsum = \infty$,
\begin{equation*}
\min I
\leq
\liminf_{n \to \infty} \frac{\sum_{k=0}^{n-1}\selection(\pthtok)\pthatkplus}{\sum_{k=0}^{n-1}\selection(\pthtok)} \leq
\limsup_{n \to \infty} \frac{\sum_{k=0}^{n-1}\selection(\pthtok)\pthatkplus}{\sum_{k=0}^{n-1}\selection(\pthtok)}
\leq
\max I.
\end{equation*}
%If $\random=\wch$, then a completely analogous argument shows that if a path $\pth \in \pths$ is \wch-random for the computable forecasting system~$\frcstsystem$, then it is also \wch-random for~$I$.
\qed
\end{proof}

{\noindent\bfseries Proof of \propositionref{prop:bounds:outer} \quad}
For ease of notation, let $p\coloneqq \liminf_{n \to \infty}\underline{\frcstsystem}(\pthton)$ and~$q\coloneqq \limsup_{n \to \infty}\overline{\frcstsystem}(\pthton)$.
By \propositionref{prop:aid}, it holds for every~$\epsilon_1,\epsilon_2~>0$ that $\sqgroup{p-\epsilon_1,q+\epsilon_2}\cap\sqgroup{0,1} \in \randomintervals$, and therefore
\begin{align*}
\max\{p-\epsilon_1,0\} \leq \min \randomI
\textrm{ and }
\max \randomI \leq \min\{q+\epsilon_2,1\}.
\end{align*}
Since this is true for all~$\epsilon_1,\epsilon_2 >0$, and since $0 \leq p$ and $q \leq 1$, we conclude that
\begin{align*}
p \leq \min \randomI \leq
\max \randomI \leq q.
\end{align*}
\qed

\begin{comment}
{\noindent\bfseries Proof of \corollaryref{cor:bounds:ML}\quad}
For ease of notation, let $p\coloneqq \liminf_{n \to \infty}\overline{\frcstsystem}(\pthton)$ and~$q\coloneqq \limsup_{n \to \infty}\underline{\frcstsystem}(\pthton)$.
By invoking \theoremref{the:coinciding}, we know that $\pth$ is almost Martin-Löf random for the interval forecast~$\sqgroup{p,q}$.
Consequently, for every~$\epsilon_1,\epsilon_2~>0$, $\sqgroup{p-\epsilon_1,q+\epsilon_2}\cap\sqgroup{0,1} \in \mlintervals$, and therefore
\begin{align*}
\max\{p-\epsilon_1,0\} \leq \min \mlI
\textrm{ and }
\max \mlI \leq \min\{q+\epsilon_2,1\}.
\end{align*}
Since this is true for all~$\epsilon_1,\epsilon_2 >0$, we conclude that
\begin{align*}
p \leq \min \mlI \leq
\max \mlI \leq q.
\end{align*}
\qed \
\end{comment}

\begin{proposition} \label{prop:bounds:church}
If a path~$\pth \in \pths$ is \ch-random for a computable forecasting system~$\frcstsystem \in \frcstsystems$, then
\begin{equation*}
\min \churchI
\leq
\liminf_{n \to \infty}\overline{\frcstsystem}(\pthton)
\textrm{ and }
\limsup_{n \to \infty}\underline{\frcstsystem}(\pthton)
\leq
\max \churchI.
\end{equation*}
\end{proposition}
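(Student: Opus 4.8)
The plan is to derive both inequalities directly from the frequentist definition, without invoking the martingale machinery. By \propositionref{prop:churches}, $\pth$ is \ch-random for the stationary forecast $\churchI$ itself, so every recursive dense selection process $\selection \in \selectionsdense$ satisfies $\min \churchI \leq \liminf_{n\to\infty}\frac{\sum_{k=0}^{n-1}\selection(\pthtok)\pthatkplus}{\sum_{k=0}^{n-1}\selection(\pthtok)}$ and, dually, $\limsup_{n\to\infty}\frac{\sum_{k=0}^{n-1}\selection(\pthtok)\pthatkplus}{\sum_{k=0}^{n-1}\selection(\pthtok)} \leq \max\churchI$. At the same time, since $\pth$ is \ch-random for $\frcstsystem$, the same $\selection$ satisfies the two $\underline{\frcstsystem}$/$\overline{\frcstsystem}$-inequalities of \definitionref{def:churchrandom}. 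First I would fix $\epsilon>0$ and build a recursive dense selection process that fires only at time instants where $\overline{\frcstsystem}(\pthton)$ is within $\epsilon$ of $\liminf_{n\to\infty}\overline{\frcstsystem}(\pthton)$; feeding it into these two chains squeezes $\min \churchI$ below $\liminf_{n\to\infty}\overline{\frcstsystem}(\pthton)+\epsilon$, and letting $\epsilon\downarrow 0$ gives the first claim. The second claim, $\limsup_{n\to\infty}\underline{\frcstsystem}(\pthton)\leq\max\churchI$, follows by the mirror-image construction.

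For the first claim, write $\bar q\coloneqq\liminf_{n\to\infty}\overline{\frcstsystem}(\pthton)$ and fix $\epsilon>0$. Since $\frcstsystem$ is computable, fix a recursive rational map $\overline q(\cdot,\cdot)$ with $\abs{\overline{\frcstsystem}(\sit)-\overline q(\sit,n)}<2^{-n}$, choose $N$ with $2^{-N}<\nicefrac{\epsilon}{3}$, and choose a rational $r$ with $\bar q+2^{-N}<r<\bar q+\nicefrac{2\epsilon}{3}$ (possible since $2^{-N}<\nicefrac{2\epsilon}{3}$). I would then set $\selection(\sit)\coloneqq 1$ if $\overline q(\sit,N)<r$ and $\selection(\sit)\coloneqq 0$ otherwise. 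The point is that $\selection$ is recursive (the thresholds $r,N$ are fixed and $\overline q$ is recursive); it is dense along $\pth$ because $\overline{\frcstsystem}(\pthton)<r-2^{-N}$ forces $\selection(\pthton)=1$ and, as $r-2^{-N}>\bar q$, the definition of $\liminf$ yields infinitely many such $n$; and $\selection(\sit)=1$ guarantees $\overline{\frcstsystem}(\sit)<r+2^{-N}<\bar q+\epsilon$. With this $\selection$, in the $\overline{\frcstsystem}$-inequality of \definitionref{def:churchrandom} for $\frcstsystem$ one may replace each $\overline{\frcstsystem}(\pthtok)$ by the constant $\bar q+\epsilon$ — which only lowers the numerator — and use constant-additivity of the average to obtain $\limsup_{n\to\infty}\frac{\sum_k\selection(\pthtok)\pthatkplus}{\sum_k\selection(\pthtok)}\leq\bar q+\epsilon$; since $\min\churchI$ lies below this $\limsup$, this gives $\min\churchI\leq\bar q+\epsilon$. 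The dual construction for the second claim selects the instants where $\underline q(\pthton,N)$ exceeds a rational just below $\limsup_{n\to\infty}\underline{\frcstsystem}(\pthton)$, then invokes the $\underline{\frcstsystem}$-inequality and the $\max\churchI$ bound.

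The hard part will be pinning down the joint recursiveness-and-density of the constructed selection process cleanly: recursiveness is fine because the construction depends on $\pth$ only through the once-and-for-all choice of the rational $r$ (the number $\bar q$ itself need not be computable, but that is irrelevant to the algorithm computing $\selection$ from the fixed data $r$, $N$, $\overline q$), whereas density is exactly where the defining property of $\liminf$/$\limsup$ enters, so those two verifications deserve to be written out carefully. Everything else is routine: comparing $\liminf$ with $\limsup$, the bookkeeping identity $\frac{\sum_k\selection(\pthtok)[\pthatkplus-c]}{\sum_k\selection(\pthtok)}=\frac{\sum_k\selection(\pthtok)\pthatkplus}{\sum_k\selection(\pthtok)}-c$, and discarding the finitely many leading terms with $\selection(\pthtok)=0$. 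Note that this route avoids \propositionref{prop:aid} and \theoremref{the:coinciding} entirely; it stays at the level of the frequentist definition, much like the proof of \propositionref{prop:churches}.
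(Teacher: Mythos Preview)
Your proposal is correct and follows essentially the same route as the paper's own proof: you threshold a recursive rational approximation of $\overline{\frcstsystem}$ at a rational level just above $\liminf_{n\to\infty}\overline{\frcstsystem}(\pthton)$ to obtain a recursive dense selection process, feed it into the $\overline{\frcstsystem}$-inequality of \definitionref{def:churchrandom} for $\frcstsystem$ to bound the selected relative frequency, and then invoke \propositionref{prop:churches} (i.e., \ch-randomness for $\churchI$) to squeeze $\min\churchI$ below $\liminf_{n\to\infty}\overline{\frcstsystem}(\pthton)+\epsilon$. The paper does exactly this, with only cosmetic differences in the choice of $\epsilon$-fractions ($\tfrac{1}{4}\epsilon$ versus your $\tfrac{1}{3}\epsilon$) and in the placement of the rational threshold $r$.
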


\begin{proof}
For ease of notation, consider the reals $p\coloneqq \liminf_{n \to \infty}\overline{\frcstsystem}(\pthton)$ and~$q\coloneqq \limsup_{n \to \infty}\underline{\frcstsystem}(\pthton)$.
Fix any path~$\pth \in \churchpths$ and any~$\epsilon>0$.
Then we will show that \(\min \churchI \leq p + \epsilon\) and~$q-\epsilon \leq \max \churchI$.
We prove the first inequality; the proof of the second one is very similar.
Fix any rational number \(r\) such that \(p+\frac{1}{2} \epsilon < r < p+\frac{3}{4}\epsilon\), and any natural number \(N\) such that \(2^{-N}<\frac{1}{4} \epsilon\).

Since \(\frcstsystem\) is a computable forecasting system, there is some recursive rational map \(\overline{q}\colon \sits \times \naturalswithzero \to \rationals\) such that \(\abs{\overline{\frcstsystem}(\sit)-\overline{q}(\sit,n)}\leq 2^{-n}\) for all~$\sit \in \sits$ and~$n \in \naturalswithzero$.
Consequently, for all~$\sit \in \sits$,
\begin{equation}
\abs{\overline{\frcstsystem}(\sit)- \overline{q}(\sit,N) } <  2^{-N}< \frac{1}{4}\epsilon \label{eq:aid:1}.
\end{equation} 
Consider the selection process \(\selection\) defined by
\begin{equation*}
\selection(\sit)\coloneqq \begin{cases}
1 &\textrm{if } \overline{q}(\sit,N) < r \\
0 &\textrm{otherwise}
\end{cases}
\textrm{ for all } \sit \in \sits.
\end{equation*}
Since \(r\) is a rational number, \(N\) is a natural number and \(\overline{q}\) is a recursive rational map, the inequality in the above expression is decidable for every~$\sit \in \sits$, so the selection process \(\selection\) is recursive.
Recall that \(p= \liminf_{n \to \infty}\overline{\frcstsystem}(\pthton)\), so there is some infinite subset $A \subseteq \naturalswithzero$ such that $\overline{\frcstsystem}(\pthton)<p+\frac{1}{4}\epsilon$ for all~$n \in A$, and therefore also
\begin{equation*}
\overline{q}(\pthton,N) \overset{\eqref{eq:aid:1}}{<} \overline{\frcstsystem}(\pthton) + \frac{1}{4}\epsilon < p + \frac{1}{4}\epsilon + \frac{1}{4}\epsilon = p + \frac{1}{2}\epsilon < r,
\end{equation*}
and hence, $\selection(\pthton)=1$ for all~$n \in A$.
Since $A$ is an infinite subset of $\naturalswithzero$, it follows that \smash{$\lim_{n \to \infty}\sum_{k=0}^{n-1} \selection(\pthtok)=\infty$}.
Consequently, since $\selection$ is dense along~$\pth$ and recursive, it holds by \definitionref{def:churchrandom} that
%\textcolor{red}{By invoking \propositionref{prop:church:nonstat} for the computable forecasting system~$\frcstsystem$, the computable gamble~$-\ind{1}$, the path~$\pth$ (which is computably random for~$\frcstsystem$) and the recursive selection process~$\selection$ for which \smash{$\lim_{n \to \infty}\selectionsum=\infty$}}, we find that
\begin{align*}
\limsup_{n \to \infty}\frac{\sum_{k=0}^{n-1}\selection(\pthtok)[\pthatkplus-\overline{\frcstsystem}(\pthtok)]}{\sum_{k=0}^{n-1}\selection(\pthtok)} \leq 0.
\end{align*}
\begin{comment}
\begin{align*}
0
&\leq
\liminf_{n\to+\infty}
\dfrac{\sum_{k=0}^{n-1}\selection(\pthtok)\sqgroup[\big]{-\ind{1}(\pthatkplus)-\lex_{\frcstsystem(\pthtok)}(-\ind{1})}}
{\sum_{k=0}^{n-1}\selection(\pthtok)} \\
&=
-\limsup_{n\to+\infty}
\dfrac{\sum_{k=0}^{n-1}\selection(\pthtok)\sqgroup[\big]{\ind{1}(\pthatkplus)-\uex_{\frcstsystem(\pthtok)}(\ind{1})}}
{\sum_{k=0}^{n-1}\selection(\pthtok)} \\
&=
-\limsup_{n \to \infty}\frac{\sum_{k=0}^{n-1}\selection(\pthtok)[\pthatkplus-\overline{\frcstsystem}(\pthtok)]}{\sum_{k=0}^{n-1}\selection(\pthtok)}.
\end{align*}
\end{comment}
Moreover, for any \(\sit \in \sits\), if $\selection(\sit)=1$ or, equivalently, if \(\overline{q}(\sit,N) < r\), then 
\begin{equation*}
\overline{\frcstsystem}(\sit) \overset{\eqref{eq:aid:1}}{<} \overline{q}(\sit,N) + \frac{1}{4}\epsilon < r + \frac{1}{4}\epsilon <p + \frac{3}{4}\epsilon + \frac{1}{4}\epsilon= p+\epsilon.
\end{equation*}
Hence,
\begin{multline*}
\limsup_{n \to \infty}\frac{\sum_{k=0}^{n-1}\selection(\pthtok)[\pthatkplus-(p+\epsilon))]}{\sum_{k=0}^{n-1}\selection(\pthtok)} \\
\leq
\limsup_{n \to \infty}\frac{\sum_{k=0}^{n-1}\selection(\pthtok)[\pthatkplus-\overline{\frcstsystem}(\pthtok)]}{\sum_{k=0}^{n-1}\selection(\pthtok)} 
\leq 
0,
\end{multline*}
so
\begin{equation*}
\limsup_{n \to \infty}\frac{\sum_{k=0}^{n-1}\selection(\pthtok)\pthatkplus}{\sum_{k=0}^{n-1}\selection(\pthtok)} 
\leq
p+\epsilon.
\end{equation*}
Since $\pth$ is \ch-random for~$\churchI$ by \propositionref{prop:churches}, it now follows from \definitionref{def:churchrandom} that, indeed, \(\min \churchI \leq p+\epsilon\).
%In a similar way, it can be proven that $q-\epsilon \leq \max \churchI$ for all~$\epsilon>0$.
\qed 
\end{proof}

\begin{proposition} \label{prop:bounds:weakchurch}
If a path~$\pth \in \pths$ is \wch-random for a computable temporal forecasting system~$\frcstsystem \in \frcstsystems$, then
\begin{equation*}
\min \weakchurchI
\leq
\liminf_{n \to \infty}\overline{\frcstsystem}(\pthton)
\textrm{ and }
\limsup_{n \to \infty}\underline{\frcstsystem}(\pthton)
\leq
\max \weakchurchI.
\end{equation*}
\end{proposition}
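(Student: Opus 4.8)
The plan is to transcribe, almost verbatim, the proof of \propositionref{prop:bounds:church}, since weak Church randomness is exactly Church randomness restricted to \emph{temporal} selection processes. The one place where the two arguments must differ is in the construction of the selection process: in the Church proof it depends on a situation~$\sit$ through a recursive rational approximation $\overline{q}(\sit,N)$ of $\overline{\frcstsystem}(\sit)$, and to make it temporal we will exploit the temporality of $\frcstsystem$ to replace this approximation by one that depends on~$\sit$ only through~$\abs{\sit}$.

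Concretely, write $p\coloneqq\liminf_{n\to\infty}\overline{\frcstsystem}(\pthton)$ and $q\coloneqq\limsup_{n\to\infty}\underline{\frcstsystem}(\pthton)$. As in the Church case, it suffices to fix an arbitrary $\epsilon>0$ and prove that $\min\weakchurchI\leq p+\epsilon$ and $q-\epsilon\leq\max\weakchurchI$; I will sketch only the first, the second being symmetric. Fix a rational~$r$ with $p+\frac{1}{2}\epsilon<r<p+\frac{3}{4}\epsilon$ and a natural~$N$ with $2^{-N}<\frac{1}{4}\epsilon$. First I would observe that, because $\frcstsystem$ is computable \emph{and} temporal, $\overline{\frcstsystem}(\sit)$ depends on~$\sit$ only through~$\abs{\sit}$ and the induced real map on~$\naturalswithzero$ is computable; hence there is a recursive rational map $\overline{q}\colon\sits\times\naturalswithzero\to\rationals$ with $\overline{q}(\sit,n)=\overline{q}(t,n)$ whenever $\abs{\sit}=\abs{t}$, and $\abs{\overline{\frcstsystem}(\sit)-\overline{q}(\sit,N)}<\frac{1}{4}\epsilon$ for all $\sit\in\sits$ — exactly the bound \equationref{eq:aid:1} used in the Church proof. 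Then define $\selection(\sit)\coloneqq 1$ if $\overline{q}(\sit,N)<r$ and $\selection(\sit)\coloneqq 0$ otherwise; since $\overline{q}(\cdot,N)$ depends on~$\sit$ only through~$\abs{\sit}$ and the inequality $\overline{q}(\sit,N)<r$ is decidable, $\selection$ is a \emph{temporal} recursive selection process.

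From here the argument is the one in \propositionref{prop:bounds:church}. Because $p=\liminf_{n}\overline{\frcstsystem}(\pthton)$, there is an infinite set $A\subseteq\naturalswithzero$ with $\overline{\frcstsystem}(\pthton)<p+\frac{1}{4}\epsilon$, hence $\overline{q}(\pthton,N)<r$, hence $\selection(\pthton)=1$, for every $n\in A$; so $\lim_{n\to\infty}\selectionsum=\infty$, i.e.\ $\selection$ is dense along~$\pth$. Since $\selection$ is temporal, recursive and dense, and $\pth$ is \wch-random for~$\frcstsystem$, \definitionref{def:churchrandom} yields $\limsup_{n}\frac{\sum_{k=0}^{n-1}\selection(\pthtok)[\pthatkplus-\overline{\frcstsystem}(\pthtok)]}{\sum_{k=0}^{n-1}\selection(\pthtok)}\leq 0$; and whenever $\selection(\sit)=1$ we have $\overline{\frcstsystem}(\sit)<\overline{q}(\sit,N)+\frac{1}{4}\epsilon<r+\frac{1}{4}\epsilon<p+\epsilon$, so $\limsup_{n}\frac{\sum_{k=0}^{n-1}\selection(\pthtok)\pthatkplus}{\sum_{k=0}^{n-1}\selection(\pthtok)}\leq p+\epsilon$. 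Finally, $\pth$ is \wch-random for the stationary forecasting system~$\weakchurchI$ by \propositionref{prop:churches}, so applying \definitionref{def:churchrandom} to this \emph{same} temporal recursive dense~$\selection$ forces $\min\weakchurchI\leq\liminf_{n}\frac{\sum_{k=0}^{n-1}\selection(\pthtok)\pthatkplus}{\sum_{k=0}^{n-1}\selection(\pthtok)}\leq p+\epsilon$. Letting $\epsilon\downarrow 0$ gives $\min\weakchurchI\leq p$, and the symmetric computation gives $q\leq\max\weakchurchI$.

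The only genuine obstacle — and it is a mild one — is the step highlighted in the second paragraph: from a computable \emph{temporal} forecasting system we must extract a recursive rational approximation of~$\overline{\frcstsystem}$ that is itself temporal, because only then is the selection process it induces admissible in \definitionref{def:churchrandom} for weak Church randomness. Everything downstream is a line-by-line copy of the proof of \propositionref{prop:bounds:church}.
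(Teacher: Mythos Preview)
Your proposal is correct and follows essentially the same approach as the paper's proof: the paper likewise adapts the argument of \propositionref{prop:bounds:church} by exploiting the temporality of~$\frcstsystem$ to obtain a recursive rational approximation $\overline{q}$ that depends on~$\sit$ only through~$\abs{\sit}$ (the paper simply writes it as a map $\overline{q}\colon\naturalswithzero\times\naturalswithzero\to\rationals$), so that the resulting selection process is temporal and hence admissible for weak Church randomness. All remaining steps---density of~$\selection$, the application of \definitionref{def:churchrandom} first to~$\frcstsystem$ and then to~$\weakchurchI$ via \propositionref{prop:churches}---match the paper line by line.
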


\begin{proof}
For ease of notation, consider the reals $p\coloneqq \liminf_{n \to \infty}\overline{\frcstsystem}(\pthton)$ and~$q\coloneqq \limsup_{n \to \infty}\underline{\frcstsystem}(\pthton)$.
Fix any path~$\pth \in \weakchurchpths$ and any~$\epsilon>0$.
Then we will show that \(\min \weakchurchI \leq p + \epsilon\) and~$q-\epsilon \leq \max \weakchurchI$.
We prove the first inequality; the proof of the second one is very similar.
Fix any rational number \(r\) such that \(p+\frac{1}{2} \epsilon < r < p+\frac{3}{4}\epsilon\), and any natural number \(N\) such that \(2^{-N}<\frac{1}{4} \epsilon\).
Since \(\frcstsystem\) is a computable temporal forecasting system, there is some recursive rational map \(\overline{q}\colon \naturalswithzero \times \naturalswithzero \to \rationals\) such that \(\abs{\overline{\frcstsystem}(\sit)-\overline{q}(\abs{\sit},n)}\leq 2^{-n}\) for all~$\sit \in \sits$ and~$n \in \naturalswithzero$.
Consequently, for all~$\sit \in \sits$,
\begin{equation}
\abs{\overline{\frcstsystem}(\sit)- \overline{q}(\abs{\sit},N) } <  2^{-N}< \frac{1}{4}\epsilon \label{eq:aid:2}.
\end{equation} 
Consider the selection process \(\selection\) defined by
\begin{equation*}
\selection(\sit)\coloneqq \begin{cases}
1 &\textrm{if } \overline{q}(\abs{\sit},N) < r \\
0 &\textrm{otherwise}
\end{cases}
\textrm{ for all } \sit \in \sits.
\end{equation*}
Since \(r\) is a rational number, \(N\) is a natural number and \(\overline{q}\) is a recursive rational map, the inequality in the above expression is decidable for every~$\sit \in \sits$, so the selection process \(\selection\) is recursive.
Moreover, $\selection$ is clearly temporal since it only depends on the situations $\sit \in \sits$ through their length $\abs{\sit}$.
Recall that \(p= \liminf_{n \to \infty}\overline{\frcstsystem}(\pthton)\), so there is some infinite subset $A \subseteq \naturalswithzero$ such that $\overline{\frcstsystem}(\pthton)<p+\frac{1}{4}\epsilon$ for all~$n \in A$, and therefore also
\begin{equation*}
\overline{q}(\pthton,N) \overset{\eqref{eq:aid:2}}{<} \overline{\frcstsystem}(\pthton) + \frac{1}{4}\epsilon < p + \frac{1}{4}\epsilon + \frac{1}{4}\epsilon = p + \frac{1}{2}\epsilon < r,
\end{equation*}
and hence, $\selection(\pthton)=1$ for all~$n \in A$.
Since $A$ is an infinite subset of $\naturalswithzero$, it follows that \smash{$\lim_{n \to \infty}\sum_{k=0}^{n-1} \selection(\pthtok)=\infty$}. 
Consequently, since $\selection$ is dense along~$\pth$, recursive and temporal, it holds by \definitionref{def:churchrandom} that 
\begin{align*}
\limsup_{n \to \infty}\frac{\sum_{k=0}^{n-1}\selection(\pthtok)[\pthatkplus-\overline{\frcstsystem}(\pthtok)]}{\sum_{k=0}^{n-1}\selection(\pthtok)} \leq 0.
\end{align*}
Moreover, for any \(\sit \in \sits\), if $\selection(\sit)=1$ or, equivalently, if \(\overline{q}(\abs{\sit},N) < r\), then 
\begin{equation*}
\overline{\frcstsystem}(\sit) \overset{\eqref{eq:aid:2}}{<} \overline{q}(\abs{\sit},N) + \frac{1}{4}\epsilon < r + \frac{1}{4}\epsilon <p + \frac{3}{4}\epsilon + \frac{1}{4}\epsilon= p+\epsilon.
\end{equation*}
Hence,
\begin{multline*}
\limsup_{n \to \infty}\frac{\sum_{k=0}^{n-1}\selection(\pthtok)[\pthatkplus-(p+\epsilon))]}{\sum_{k=0}^{n-1}\selection(\pthtok)} \\
\leq
\limsup_{n \to \infty}\frac{\sum_{k=0}^{n-1}\selection(\pthtok)[\pthatkplus-\overline{\frcstsystem}(\pthtok)]}{\sum_{k=0}^{n-1}\selection(\pthtok)} 
\leq 
0,
\end{multline*}
so
\begin{equation*}
\limsup_{n \to \infty}\frac{\sum_{k=0}^{n-1}\selection(\pthtok)\pthatkplus}{\sum_{k=0}^{n-1}\selection(\pthtok)} 
\leq
p+\epsilon.
\end{equation*}
Since $\pth$ is \wch-random for~$\weakchurchI$ by \propositionref{prop:churches}, it now follows from \definitionref{def:churchrandom} that, indeed, \(\min \weakchurchI \leq p+\epsilon\).
%In a similar way, it can be proven that $q-\epsilon \leq \max \weakchurchI$ for all~$\epsilon>0$.
\qed
\end{proof}

\begin{proposition} \label{prop:bounds:church:inner}
For any path~$\pth \in \pths$ that is \ch-random for a computable precise forecasting system~$\frcstsystem \in \frcstsystems$:
\begin{equation*}
\Big[\liminf_{n \to \infty}\frcstsystem(\pthton),\limsup_{n \to \infty}\frcstsystem(\pthton)\Big] \subseteq \churchI.
\end{equation*}
\end{proposition}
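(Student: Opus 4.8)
The plan is to derive the statement directly from Proposition~\ref{prop:bounds:church}, which has just been proved. First I would recall that a \emph{precise} forecasting system satisfies $\lfrcstsystem(\sit)=\ufrcstsystem(\sit)=\frcstsystem(\sit)$ for every situation $\sit\in\sits$. In particular, evaluating along the path $\pth$ and passing to limits, this gives $\liminf_{n\to\infty}\ufrcstsystem(\pthton)=\liminf_{n\to\infty}\frcstsystem(\pthton)$ and $\limsup_{n\to\infty}\lfrcstsystem(\pthton)=\limsup_{n\to\infty}\frcstsystem(\pthton)$.

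Next, since $\pth$ is \ch-random for the computable forecasting system $\frcstsystem$ by hypothesis, Proposition~\ref{prop:bounds:church} applies and yields $\min\churchI\le\liminf_{n\to\infty}\ufrcstsystem(\pthton)$ and $\limsup_{n\to\infty}\lfrcstsystem(\pthton)\le\max\churchI$. Combining this with the two identities above, I obtain $\min\churchI\le\liminf_{n\to\infty}\frcstsystem(\pthton)$ and $\limsup_{n\to\infty}\frcstsystem(\pthton)\le\max\churchI$.

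Finally I would note that $\churchI$ is a well-defined, non-empty closed interval forecast (by Corollary~\ref{cor:relations:interval}), so that $\churchI=[\min\churchI,\max\churchI]$, and that the pair of inequalities just obtained is precisely the statement that $[\liminf_{n\to\infty}\frcstsystem(\pthton),\limsup_{n\to\infty}\frcstsystem(\pthton)]\subseteq\churchI$, as desired. No genuine obstacle is expected here: all the real work is contained in Proposition~\ref{prop:bounds:church}; the only points needing (minimal) care are the reduction of the $\liminf$/$\limsup$ expressions in the precise case and the elementary observation that an inclusion of closed intervals is equivalent to the corresponding comparison of endpoints. I would also remark that, combined with Proposition~\ref{prop:bounds:outer} (which supplies the reverse inclusion $\churchI\subseteq\frcstI$ for computable forecasting systems), this establishes the equality $\churchI=\frcstI$ underlying Theorem~\ref{the:coinciding}.
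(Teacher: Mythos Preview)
Your proposal is correct and follows essentially the same approach as the paper: both invoke Proposition~\ref{prop:bounds:church} and use that precision means $\lfrcstsystem=\ufrcstsystem=\frcstsystem$, then read off the endpoint inequalities that give the desired inclusion. Your added remarks (non-emptiness of $\churchI$, the link to Theorem~\ref{the:coinciding}) are fine but not needed for the proof itself.
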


\begin{proof}
%{\noindent\bfseries Proof of \propositionref{prop:bounds:church:inner}\quad}
Since the forecasting system~$\frcstsystem \in \frcstsystems$ is precise by assumption, it holds that $\underline{\frcstsystem}=\overline{\frcstsystem}$, and therefore, by \propositionref{prop:bounds:church},
\begin{equation*}
\min \churchI
\leq
\liminf_{n \to \infty}\frcstsystem(\pthton)
\leq
\limsup_{n \to \infty}\frcstsystem(\pthton)
\leq
\max \churchI.
\end{equation*}
\qed
\end{proof}

\begin{proposition} \label{prop:bounds:weakchurch:inner}
For any path~$\pth \in \pths$ that is \wch-random for a computable precise temporal forecasting system~$\frcstsystem \in \frcstsystems$:
\begin{equation*}
\Big[\liminf_{n \to \infty}\frcstsystem(\pthton),\limsup_{n \to \infty}\frcstsystem(\pthton)\Big] \subseteq \weakchurchI.
\end{equation*}
\end{proposition}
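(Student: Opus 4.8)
The plan is to mimic the proof of \propositionref{prop:bounds:church:inner} almost verbatim, replacing the appeal to \propositionref{prop:bounds:church} by one to \propositionref{prop:bounds:weakchurch}. First I would observe that, since $\frcstsystem$ is precise by assumption, $\lfrcstsystem=\ufrcstsystem=\frcstsystem$, and hence in particular $\liminf_{n\to\infty}\ufrcstsystem(\pthton)=\liminf_{n\to\infty}\frcstsystem(\pthton)$ and $\limsup_{n\to\infty}\lfrcstsystem(\pthton)=\limsup_{n\to\infty}\frcstsystem(\pthton)$.

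Next I would invoke \propositionref{prop:bounds:weakchurch}. This is legitimate precisely because $\frcstsystem$ is, on top of being precise, assumed to be a \emph{computable temporal} forecasting system, and $\pth$ is \wch-random for it. The proposition then gives $\min\weakchurchI\leq\liminf_{n\to\infty}\ufrcstsystem(\pthton)$ and $\limsup_{n\to\infty}\lfrcstsystem(\pthton)\leq\max\weakchurchI$. Substituting the identities from the previous step turns these into $\min\weakchurchI\leq\liminf_{n\to\infty}\frcstsystem(\pthton)$ and $\limsup_{n\to\infty}\frcstsystem(\pthton)\leq\max\weakchurchI$.

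Since trivially $\liminf_{n\to\infty}\frcstsystem(\pthton)\leq\limsup_{n\to\infty}\frcstsystem(\pthton)$, chaining these inequalities yields $\min\weakchurchI\leq\liminf_{n\to\infty}\frcstsystem(\pthton)\leq\limsup_{n\to\infty}\frcstsystem(\pthton)\leq\max\weakchurchI$, which is exactly the desired inclusion $\bigl[\liminf_{n\to\infty}\frcstsystem(\pthton),\limsup_{n\to\infty}\frcstsystem(\pthton)\bigr]\subseteq\weakchurchI$.

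The only point that requires attention — and the only respect in which this differs from \propositionref{prop:bounds:church:inner} — is that \propositionref{prop:bounds:weakchurch}, unlike \propositionref{prop:bounds:church}, needs the forecasting system to be temporal as well as computable; this is why the hypothesis of the present statement includes temporality. Beyond keeping track of that hypothesis there is no genuine obstacle: the statement is an immediate corollary of the two preceding propositions together with preciseness.
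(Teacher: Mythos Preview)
Your proposal is correct and follows essentially the same approach as the paper: note that preciseness gives $\lfrcstsystem=\ufrcstsystem=\frcstsystem$, then apply \propositionref{prop:bounds:weakchurch} (whose temporality hypothesis is ensured by the assumptions here) to obtain the chain of inequalities that yields the inclusion. Your remark about why temporality is needed is exactly the point.
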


\begin{proof}
%{\noindent\bfseries Proof of \propositionref{prop:bounds:weakchurch:inner}\quad}
Since the forecasting system~$\frcstsystem \in \frcstsystems$ is precise by assumption, it holds that $\underline{\frcstsystem}=\overline{\frcstsystem}$, and therefore, by \propositionref{prop:bounds:weakchurch},
\begin{equation*}
\min \weakchurchI
\leq
\liminf_{n \to \infty}\frcstsystem(\pthton)
\leq
\limsup_{n \to \infty}\frcstsystem(\pthton)
\leq
\max \weakchurchI.
\end{equation*}
\qed
\end{proof}

{\noindent\bfseries Proof of \theoremref{the:coinciding}\quad}
For ease of notation, let $p\coloneqq \liminf_{n \to \infty}\frcstsystem(\pthton)$ and~$q\coloneqq \limsup_{n \to \infty}\frcstsystem(\pthton)$.
Since $\pth$ is \random-random for the computable precise forecasting system~$\frcstsystem$, with $\random \in \set{\ml,\wml,\co,\ch}$, $\pth$ is also \ch-random for~$\frcstsystem$ by \equationref{eq:inclusions:comp}.
Consequently, it follows from Propositions~\ref{prop:bounds:outer} and~\ref{prop:bounds:church:inner} that 
\begin{equation*}
\randomI \subseteq \sqgroup{p,q} \textrm{ and } \sqgroup{p,q} \subseteq \churchI.
%\min \churchI \leq p \textrm{ and } q \leq \max \churchI.
\end{equation*}
%By \propositionref{prop:relations:interval}, it holds that if a path~$\pth$ is Martin-Löf random for an interval forecast~$I$, then it is also Church random for~$I$, and therefore $\sqgroup{p,q} \subseteq \churchI \subseteq I$.
%Moreover, we know from \propositionref{prop:bounds:outer} that 
%$\pth$ is (almost) completely Martin-Löf random for the interval forecast~$\sqgroup{p,q}$.
%Hence, indeed, \(\churchI=\mlI= \intervalpq\), and therefore, by \corollaryref{cor:relations:interval}, \(\churchI=\compI=\weakmlI=\mlI= \intervalpq\).
By invoking \corollaryref{cor:relations:interval}, we infer that \(\churchI=\randomI= \intervalpq\).
\qed \

{\noindent\bfseries Proof of \theoremref{the:coinciding:temporal}\quad}
For ease of notation, let $p\coloneqq \liminf_{n \to \infty}\frcstsystem(\pthton)$ and~$q\coloneqq \limsup_{n \to \infty}\frcstsystem(\pthton)$.
Since $\pth$ is \random-random for the computable precise temporal forecasting system~$\frcstsystem$, with $\random \in \set{\ml,\wml,\co,\s,\ch,\wch}$, $\pth$ is also \wch-random for~$\frcstsystem$ by \equationref{eq:inclusions:comp}.
Consequently, it follows from Propositions~\ref{prop:bounds:outer} and~\ref{prop:bounds:weakchurch:inner} that 
\begin{equation*}
\randomI \subseteq \sqgroup{p,q} \textrm{ and } \sqgroup{p,q} \subseteq \weakchurchI.
%\min \churchI \leq p \textrm{ and } q \leq \max \churchI.
\end{equation*}
%\begin{equation*}
%\min \weakchurchI \leq p \textrm{ and } q \leq \max \weakchurchI.
%\end{equation*}
%By \propositionref{prop:relations:interval}, it holds that if a path~$\pth$ is Martin-Löf random for an interval forecast~$I$, then it is also weakly Church random for~$I$, and therefore $\sqgroup{p,q} \subseteq \weakchurchI \subseteq I$.
%Moreover, we know from \propositionref{prop:bounds:ML} that $\pth$ is (almost) Martin-Löf random for the interval forecast~$\sqgroup{p,q}$.
%Hence, indeed, \(\weakchurchI=\mlI= \intervalpq\), and therefore, by \corollaryref{cor:relations:interval}, \(\weakchurchI=\churchI=\compI=\schnorrI=\weakmlI=\mlI= \intervalpq\).
By invoking \corollaryref{cor:relations:interval}, we infer that \(\weakchurchI=\randomI=\intervalpq\).
\qed \

\section*{Proofs and additional material for Section~\ref{sec:unalike}}

{\noindent\bfseries Proof of \propositionref{prop:counterexample1}\quad}
Yongge Wang proved the existence of a path~\(\pth \in \pths\) and a recursive rational test supermartingale~\(\test \in \testscomp{\nicefrac{1}{2}}\) such that \cite{Wang1996}
\begin{enumerate}[label=\upshape(\roman*),leftmargin=*,noitemsep]
\item $\nicefrac{1}{2} \in \schnorrintervals$;
\item\label{it:wang:unbounded} \(\test\) is unbounded---but not computably so---on~\(\pth\), also implying that \(\pth\) is not computably random for~$\nicefrac{1}{2}$;
\item\label{it:wang:martingale} for all~\(\sit\in\sits\), either \(\group{\forall x\in\outcomes}\test(\sit x)=2x\test(\sit)\) or \(\group{\forall x\in\outcomes}\test(\sit x)=\test(\sit)\);
\end{enumerate}
and as a consequence also
\begin{enumerate}[label=\upshape(\roman*),leftmargin=*,noitemsep,resume]
\item\label{it:wang:martingale:on:path} if \(\test(\pthtonplus)=2\pthatnplus\test(\pthton)\) then \(\pthatnplus=1\), for all~\(n\in\naturalswithzero\).
\end{enumerate}
We will prove that $\pth$ is exactly the path we are after.
One immediate conclusion we can draw from the above conditions, is that \(\test\) remains positive on~\(\pth\), so \(\test(\pthto{n})>0\) for all \(n\in\naturalswithzero\), simply because \ref{it:wang:martingale} implies that if \(\test\) ever becomes zero, it remains zero.
It can therefore then never become unbounded on~\(\pth\), contradicting~\ref{it:wang:unbounded}.
From \ref{it:wang:martingale} and~\ref{it:wang:martingale:on:path}, it then follows that, for every~$n \in \naturalswithzero$, $\test(\pthtonplus)=\test(\pthton)>0$ or $\test(\pthtonplus)=\test(\pthton 1)=2 \test(\pthton)>0$.
%From \ref{it:wang:martingale} and~\ref{it:wang:martingale:on:path}, it then follows that for every~$n \in \naturalswithzero$, there is some~$m_n \in \naturalswithzero$ such that $\test(\pthton)=2^{m_n}$, with $m_{n+1} = m_n$ or $m_{n+1} = m_n + 1$.
%Clearly, by \ref{it:wang:martingale}, if $m_{n+1}=m_n+1$, then $\pthatnplus=1$.
%Moreover, since $\limsup_{n \to \infty}\test(\pthton)=\infty$, it holds that $\lim_{n \to \infty} m_n=\infty$.

To show that $\schnorrI=\nicefrac{1}{2}$, we just observe that since $\nicefrac{1}{2} \in \schnorrintervals$ and since $\schnorrI$ is non-empty by \propertyref{prop:law:large:numbers}, it immediately follows that $\schnorrI=\bigcap \schnorrintervals = \nicefrac{1}{2}$.

We continue by showing that $\sqgroup{\nicefrac{1}{2},1} \subseteq\compI$.
By \corollaryref{cor:relations:interval}, it suffices to show that $\sqgroup{\nicefrac{1}{2},1} \subseteq\churchI$.
Since $\schnorrI=\nicefrac{1}{2}$, it follows from \corollaryref{cor:relations:interval} that $\weakchurchI=\nicefrac{1}{2}\neq \emptyset$, and therefore, again by \corollaryref{cor:relations:interval}, $\min \churchI \leq \nicefrac{1}{2}$.

We end by showing that $\max \churchI=1$.
To this end, consider the selection process~$\selection$ defined by 
\begin{equation*}
\selection(\sit)
\coloneqq
\begin{cases}
1 &\textrm{if \(\test(\sit1)=2\test(\sit)\)} \\
0 &\textrm{otherwise}
\end{cases}
\quad\text{for all~\(\sit\in\sits\)}.
\end{equation*}
Since $\test$ is recursive, the above equality can be checked in a recursive way, and therefore, $\selection$ is recursive.
From the above discussion, it follows that if $\selection(\pthton)~=~1$, then $\pthatnplus=1$ for all~$n \in \naturalswithzero$.
Since $\limsup_{n \to \infty}\test(\pthton)=\infty$, it follows from~\ref{it:wang:unbounded} that the first multiplication rule in~\ref{it:wang:martingale} must apply an infinite number of times on~\(\pth\). 
Consequently, \(\selection\) selects an infinite subsequence of ones from~\(\pth\), so the corresponding sequence of relative frequencies on this recursively selected infinite subsequence converges to~\(1\). 
By \definitionref{def:churchrandom} and \propositionref{prop:churches}, it now holds that $\max \churchI =1$.
\qed \
\begin{comment}
\begin{equation*}
\multprocess_\delta(\sit)(x)\coloneqq \begin{cases}
\frac{x}{1-\delta} &\textrm{if } \multprocess_\test(\sit)(x)=2x\\
1 &\textrm{otherwise}
\end{cases} \textrm{ for all } \sit \in \sits \textrm{ and } x \in \posspace.
\end{equation*}
\end{comment}

\begin{lemma} \label{lem:rec:enum:weakml:D}
For every rational interval forecast~$I \subseteq \group{0,1}$, there is a recursive enumeration of all lower semicomputable non-negative real supermartingale multipliers for~$I$.
%$\test \in \smash{\weakmltests{I}}$ with lower semicomputable multiplier processes.
\end{lemma}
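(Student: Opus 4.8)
The plan is to obtain the enumeration by post-processing the universal enumeration of \corollaryref{cor:enumeration}. Applying that corollary with $\mathcal{D}=\sits\times\posspace$ gives a recursive rational map $q\colon\naturalswithzero\times(\sits\times\posspace)\times\naturalswithzero\to\rationals$, non-decreasing in its last argument, such that the maps $r_i\colon\sits\times\posspace\to\sqgroup{0,+\infty}$ with $r_i(\sit,x)\coloneqq\lim_{n\to\infty}q(i,(\sit,x),n)$ form a recursive enumeration of \emph{all} lower semicomputable non-negative extended real maps on $\sits\times\posspace$. Every lower semicomputable non-negative real supermartingale multiplier for $I$ is finite-valued, hence such a map, so it occurs among the $r_i$. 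It therefore suffices to construct, uniformly and recursively in $i$, a lower semicomputable non-negative \emph{real} supermartingale multiplier $\multprocess_i^\ast$ for $I$ such that $\multprocess_i^\ast=r_i$ whenever $r_i$ already is one; then $(\multprocess_i^\ast)_{i\in\naturalswithzero}$ is the desired enumeration.

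The construction I would use is a ``freeze at the last admissible stage'' device. Since $I$ is rational with $0<\min I\le\max I<1$, the operator $\uex_I$ is computable on rational gambles via \equationref{eq:uex:2}, and the predicate $\uex_I(g)>1$ is decidable for rational $g$. Fix $i$ and $\sit$, and write $g_n\coloneqq\group{q(i,(\sit,0),n),q(i,(\sit,1),n)}\in\nonnegrationals^2$ for the non-decreasing sequence of rational gambles approximating $r_i(\sit,\cdot)$. Let $N\coloneqq\min\set{n\in\naturalswithzero\colon\uex_I(g_n)>1}\in\naturalswithzero\cup\set{\infty}$, and define a new rational gamble sequence by $g_n^\ast\coloneqq g_n$ if $n<N$ and $g_n^\ast\coloneqq g_{N-1}$ if $n\ge N$, where $g_{-1}$ is read as the zero gamble (this only occurs when $N=0$). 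Finally set $\multprocess_i^\ast(\sit)(x)\coloneqq\lim_{n\to\infty}g_n^\ast(x)$.

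The verifications are then routine. The sequence $(g_n^\ast)_n$ is non-decreasing, non-negative, rational, and recursive uniformly in $(i,\sit)$, so each $\multprocess_i^\ast$ is lower semicomputable and $(\multprocess_i^\ast)_i$ is recursively enumerable. One has $\uex_I(g_n^\ast)\le1$ for every $n$: this is clear when $n<N$ or $N=0$, and when $n\ge N\ge1$ it holds since $\uex_I(g_{N-1})\le1$ by minimality of $N$. Hence, by \equationref{eq:uex:2} and $0<\min I<1$, each $g_n^\ast$ is bounded (by $\max\set{1/\min I,1/(1-\min I)}$), so the limits are finite and $\multprocess_i^\ast$ is a genuine non-negative real multiplier process; by pointwise continuity of $\uex_I$ (property~\ref{prop:coherence:uniformcontinuity}) we get $\uex_I(\multprocess_i^\ast(\sit))\le1$ for all $\sit$, i.e.\ $\multprocess_i^\ast$ is a supermartingale multiplier for $I$. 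For idempotency, if $r_i$ itself is a non-negative real supermartingale multiplier for $I$, then $g_n\le r_i(\sit,\cdot)$ componentwise for every $n$, so $\uex_I(g_n)\le\uex_I(r_i(\sit,\cdot))\le1$ by monotonicity of $\uex_I$ (property~\ref{prop:coherence:increasingness}); thus $N=\infty$, $g_n^\ast=g_n$ for all $n$, and $\multprocess_i^\ast(\sit)(x)=r_i(\sit,x)$.

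I expect the main obstacle to be exactly the tension between lower semicomputability and idempotency: the obvious normalisation $g_n\mapsto g_n/\max\set{1,\uex_I(g_n)}$ destroys monotonicity of the rational approximations and so does not present a lower semicomputable process. The freezing trick circumvents this, and it works precisely because the admissible set $\set{g\in\gambles\colon g\ge0,\ \uex_I(g)\le1}$ is downward closed (by property~\ref{prop:coherence:increasingness}), so the partial approximations of a genuine multiplier never leave it and freezing is never triggered on such inputs. The rest is elementary manipulation of $\uex_I$ on the two-point sample space.
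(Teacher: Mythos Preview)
Your proof is correct and follows essentially the same strategy as the paper: start from the universal enumeration of \corollaryref{cor:enumeration}, then apply a ``freeze at the last admissible stage'' construction, exploiting the decidability of $\uex_I(g)\le1$ for rational $g$ and rational $I$, and verify idempotency via downward closedness of the admissible set. The paper's recursion $q'(i,\sit,x,n+1)\coloneqq\max\{0,q(i,\sit,x,n+1)\}$ if admissible and $q'(i,\sit,x,n)$ otherwise is exactly your freezing device written step-by-step rather than via the stopping time $N$; the two produce the same sequence since once $\uex_I(g_n)>1$ it stays above $1$ by monotonicity of both $n\mapsto g_n$ and $\uex_I$. (Your stated bound $\max\{1/\min I,1/(1-\min I)\}$ is slightly looser than the paper's $\max\{1/\max I,1/(1-\min I)\}$ from \lemmaref{lem:bounded:above}, but still valid since $I\subseteq(0,1)$.)
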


\begin{proof}
By \corollaryref{cor:enumeration}, 
%The existence of a universal Turing machine implies that
there is a recursive enumeration of all lower semicomputable non-negative extended real maps $r \colon \sits \times \posspace \to \sqgroup{0,\infty}$. % \cite[Lemma 13]{Vovk2010}.
This means that there is some recursive rational map~$q\colon \naturalswithzero \times \sits \times \posspace \times \naturalswithzero \to \rationals$ such that 
\begin{enumerate}[label=\upshape(\roman*),leftmargin=*,noitemsep]
\item $q(i,\sit,x,n+1) \geq q(i,\sit,x,n)$ for all~$\sit \in \sits$, $x \in \posspace$ and~$ i,n \in \naturalswithzero$;
\item for every~$i \in \naturalswithzero$, the extended real map $r\colon \sits \times \posspace \to \sqgroup{0,+\infty}$, defined by $r(\sit,x) \coloneqq \lim_{n \to \infty}q(i,\sit,x,n)$ for all $\sit \in \sits$ and $x \in \posspace$, is lower semicomputable and non-negative;
\item for every lower semicomputable non-negative extended real map~$r\colon \sits \times \posspace \to \sqgroup{0,+\infty}$, there is an index $i \in \naturalswithzero$ such that $\lim_{n \to \infty}q(i,\sit,x,n)=r(\sit,x)$ for all~$\sit \in \sits$ and~$x \in \posspace$.
\end{enumerate}

\begin{comment}
\begin{equation*}
q(i,\sit,x,n) \leq r_{i,\sit,x,n+1} \textrm{ for all } \sit \in \sits \textrm{, } x \in \posspace \textrm{ and } i,n \in \naturalswithzero,
\end{equation*}
\end{comment}
\begin{comment}
\begin{equation*}
\lim_{n \to \infty}r_{i',\sit,x,n}=\multprocess(\sit)(x) \textrm{ for all } \sit \in \sits \textrm{, } x \in \posspace \textrm{ and } n \in \naturalswithzero.
\end{equation*}
\end{comment}
%Fix any~$K \in \posrationals$ such that $K \geq \max\{\nicefrac{1}{\max I},\nicefrac{1}{1-\min I}\}$.
%We can assume that $\leq q(i,\sit,x,n) \leq K$ for all~$\sit \in \sits$, $x \in \posspace$ and~$i,n \in \naturalswithzero$.
%Otherwise, we just consider the recursive net $\min\{K,\max\{0,q(i,\sit,x,n)\}\}$.
From the recursive rational map~$q$, we will construct a recursive rational map~$q'\colon\naturalswithzero \times \sits\times\posspace\times \naturalswithzero \to \rationals$ such that 
\begin{enumerate}[label=\upshape(\roman*'),leftmargin=*,noitemsep,ref=\upshape(\roman*')]
\item $q'(i,\sit,x,n+1) \geq q'(i,\sit,x,n)$ for all~$\sit \in \sits$, $x \in \posspace$ and~$i,n \in \naturalswithzero$; \label{step:universal:1a}
\item for every~$i \in \naturalswithzero$, the gamble process $\multprocess_i \colon \sits \to \gambles$, defined by $\multprocess_i(\sit)(x)\coloneqq \lim_{n \to \infty}q'(i,\sit,x,n)$ for all $\sit \in \sits$ and $x \in \posspace$, is a lower semicomputable non-negative real supermartingale multiplier for~$I$; \label{step:universal:2a}
\item for every lower semicomputable non-negative real supermartingale multiplier $\multprocess$ for~$I$, there is an index $i \in \naturalswithzero$ such that $\lim_{n \to \infty}q'(i,\sit,x,n)=\multprocess(\sit)(x)$ for all~$\sit \in \sits$ and~$x \in \posspace$. \label{step:universal:3a}
\end{enumerate}
To this end, consider the rational map~$q'$ defined by $q'(i,\sit,x,0)\coloneqq 0$ for all~$i \in \naturalswithzero$, $\sit \in \sits$ and~$x \in \posspace$, and by the recursion equation 
\begin{equation*}
q'(i,\sit,x,n+1)\coloneqq \begin{cases}
\max\{0,q(i,\sit,x,n+1)\} &\textrm{if } \uex_I(\max\{0,q(i,\sit,\bullet,n+1)\}) \leq 1 \\
q'(i,\sit,x,n) &\textrm{otherwise},
\end{cases}
\end{equation*}
for all~$\sit \in \sits$, $x \in \posspace$ and~$i,n \in \naturalswithzero$, with $\max\{0,q(i,\sit,\bullet,n+1)\}$ the pointwise maximum of the gambles $0$ and~$q(i,\sit,\bullet,n+1)$.
Since $q$ is a recursive rational map and since $I$ is a rational interval forecast, it follows from \equationref{eq:uex:2} that the inequality in the above expression is decidable for every~$\sit \in \sits$ and~$i,n \in \naturalswithzero$.
Combined with the fact that taking the maximum with respect to zero preserves recursiveness, it immediately follows from the above definition that $q'$ is a recursive map as well.
Since we take the maximum with respect to zero and since $q(i,\sit,x,0)=0$ for all~$i \in \naturalswithzero$, $\sit \in \sits$ and~$x \in \posspace$, it also follows that $q'$ is non-negative.

To prove \ref{step:universal:1a}, we fix any $\sit \in \sits$, $x \in \posspace$ and $i,n \in \naturalswithzero$.
Note that since $q(i,\sit,x,n)\leq q(i,\sit,x,n+1)$, it immediately follows that $\max\{0,q(i,\sit,x,n)\} \leq \max\{0,q(i,\sit,x,n+1)\}$.
Consequently, if $\uex_I(\max\{0,q(i,\sit,\bullet,n+1)\}) \leq 1$, then by \ref{prop:coherence:increasingness} also $\uex_I(\max\{0,q(i,\sit,\bullet,n)\}) \leq 1$.
We infer that if $\uex_I(\max\{0,q(i,\sit,\bullet,n+1)\}) \leq 1$, then 
\begin{equation*}
q'(i,\sit,x,n)=\max\{0,q(i,\sit,x,n)\} \leq \max\{0,q(i,\sit,x,n+1)\} = q'(i,\sit,x,n+1),
\end{equation*}
and if $\uex_I(\max\{0,q(i,\sit,\bullet,n+1)\}) > 1$, then $q'(i,\sit,x,n) = q'(i,\sit,x,n+1)$.
Therefore, \ref{step:universal:1a} indeed holds.
%$q'(i,\sit,x,n)\leq r'_{i,\sit,x,n+1}$ for all~$\sit \in \sits$, $x \in \posspace$ and~$i,n \in \naturalswithzero$.

To prove \ref{step:universal:2a}, we fix any~$i \in \naturalswithzero$, and consider the multiplier process~$\multprocess_i$ defined by $\multprocess_i(\sit)(x) \coloneqq \lim_{n \to \infty}q'(i,\sit,x,n)$ for all~$\sit \in \sits$ and~$x \in \posspace$.
Let's show that $\multprocess_i$ is a lower-semicomputable non-negative real supermartingale multiplier for~$I$.
First, since $q'$ is a recursive map and since $q'(i,\sit,x,n)\leq q'(i,\sit,x,n+1)$ for all~$\sit \in \sits$, $x \in \posspace$ and~$n \in \naturalswithzero$, it immediately follows that $\multprocess_i$ is well-defined (possibly infinite) and lower semicomputable.
Second, since $q'(i,\sit,x,n) \geq 0$ for all~$\sit \in \sits$, $x \in \posspace$ and~$n \in \naturalswithzero$, it follows that $\multprocess_i$ is non-negative.
Last, it holds by construction that $\uex_I(q'(i,\sit,\bullet,0))=\uex_I(0)=0$ and that 
\begin{align*}
\uex_I(q'(i,\sit,\bullet,n+1))
&=
\begin{cases}
\uex_I(\max\{0,q(i,\sit,\bullet,n+1)\}) \\
\quad \quad \quad \textrm{if } \uex_I(\max\{0,q(i,\sit,\bullet,n+1)\}) \leq 1, \\
\uex_I(q'(i,\sit,\bullet,n)) \\
\quad \quad \quad \textrm{otherwise},
\end{cases} \\
&\leq
\max\{1, \uex_I(q'(i,\sit,\bullet,n))\} \\%\hphantom{aaaaaaaaaaaa} (q'(i,s,\bullet,n) \geq 0)\\
&\leq \max\{1, \max\{1,\uex_I(q'(i,\sit,\bullet,n-1))\}\} \\
&=\max\{1,\uex_I(q'(i,\sit,\bullet,n-1))\} \\
&\leq ...
\leq \max\{1, \uex_I(q'(i,\sit,\bullet,0))\}
=
\max\{1,0\}
=
1,
\end{align*}
for all~$\sit \in \sits$ and~$n \in \naturalswithzero$.
Hence, for all~$\sit \in \sits$ and~$n \in \naturalswithzero$, $\uex_I(q'(i,\sit,\bullet,n)) \leq 1$.
Therefore, it follows from \lemmaref{lem:bounded:above} that for the rational number~$K \in \posrationals$, defined by $K \coloneqq\max\{\nicefrac{1}{\max I},\nicefrac{1}{(1-\min I)}\}$, it holds that $q'(i,\sit,x,n)\leq K$ for all~$\sit \in \sits$, $x \in \posspace$ and~$n \in \naturalswithzero$, and therefore, $\multprocess_i(\sit)(x)=\lim_{n \to \infty} q'(i,\sit,x,n) \leq K$ for all~$\sit \in \sits$ and~$x \in \posspace$.
By recalling that $\multprocess_i$ is non-negative, we conclude that $\multprocess_i$ is real.
Consequently, since $\posspace$ is a finite set, it follows from \ref{prop:coherence:uniformcontinuity} that $\uex_I(\multprocess_i(\sit))=\lim_{n \to \infty} \uex_I(q'(i,\sit,\bullet,n)) \leq 1$ for all~$\sit \in \sits$, and hence, $\multprocess_i$ is a supermartingale multiplier for~$I$.
We conclude that $\multprocess_i$ is a lower semicomputable non-negative real supermartingale multiplier for~$I$.
%Last, since $\uex_I(\multprocess_i(\sit))\leq 1$ for all~$\sit \in \sits$, it follows from $\lemmaref{lem:bounded:above}$ that $\multprocess_i(\sit)(1)\leq \nicefrac{1}{\max I}$ and~$\multprocess_i(\sit)(0)\leq \nicefrac{1}{1-\min I}$ for all~$\sit \in \sits$, and hence, the lower semicomputable non-negative supermartingale muliplier $\multprocess_i$ for~$I$ is real.

To prove \ref{step:universal:3a}, consider any lower semicomputable non-negative real supermartingale multiplier $\multprocess$ for~$I$.
Since $\multprocess$ is lower semicomputable, there is some index $i \in \naturalswithzero$ such that $\multprocess(\sit)(x)=\lim_{n \to \infty}q(i,\sit,x,n)$ for all~$\sit \in \sits$ and~$x \in \posspace$.
We intend to show that for the same index $i \in \naturalswithzero$, it holds that $\multprocess(\sit)(x)=\lim_{n \to \infty}q'(i,\sit,x,n)$ for all~$\sit \in \sits$ and~$x \in \posspace$.
To this end, fix any~$\sit \in \sits$.
Since $\multprocess$ is a non-negative real supermartingale multiplier for~$I$ and since $q(i,\sit,x,n) \leq \multprocess(\sit)(x)$ for all~$x \in \posspace$ and~$n \in \naturalswithzero$, it follows from \ref{prop:coherence:increasingness} that 
\begin{equation*}
\uex_I(\max\{0,q(i,\sit,\bullet,n)\}) \leq \uex_I(\max\{0,\multprocess(\sit)\}) = \uex_I(\multprocess(\sit)) \leq 1 \textrm{ for all } n \in \naturalswithzero.
\end{equation*} 
Consequently, $q'(i,\sit,x,n)=\max\{0,q(i,\sit,x,n)\}$ for all~$x \in \posspace$ and~$n \in \naturals$.
Since $\multprocess(\sit)(x)~\geq~0$ for all $x \in \posspace$, it immediately follows that
\begin{multline*}
\multprocess(\sit)(x)
=
\lim_{n\to\infty}q(i,\sit,x,n) \\
=
\lim_{n\to\infty}\max\{0,q(i,\sit,x,n)\}
=
\lim_{n\to\infty}q'(i,\sit,x,n) \textrm{ for all } x \in \posspace.
\end{multline*}

From \ref{step:universal:1a}--\ref{step:universal:3a} we conclude that $q'$ provides us with a recursive enumeration of all lower semicomputable non-negative real supermartingale multipliers for~$I$.
\qed
\end{proof}

\begin{comment}
\begin{lemma}[{\cite[Proposition 7]{CoomanBock2021}}] \label{lem:lowersemi:D:Md}
Consider any lower semicomputable non-negative real multiplier process~$\multprocess$.
Then $\mint$ is also lower semicomputable.
\end{lemma}
\end{comment}

\begin{corollary} \label{cor:rec:enum:weakml:T}
For every rational interval forecast~$I \subseteq \group{0,1}$, there is a recursive enumeration of all lower semicomputable test supermartingales $\mint \in \smash{\weakmltests{I}}$ generated by lower semicomputable supermartingale multipliers for $I$.
\end{corollary}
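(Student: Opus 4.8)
The plan is to obtain the required enumeration by transporting the recursive enumeration of multipliers from \lemmaref{lem:rec:enum:weakml:D} through the operation $\multprocess\mapsto\mint$ that sends a non-negative multiplier process to the test supermartingale it generates. First I would apply \lemmaref{lem:rec:enum:weakml:D} to the rational interval forecast $I\subseteq\group{0,1}$ to fix a recursive rational map $q'\colon\naturalswithzero\times\sits\times\posspace\times\naturalswithzero\to\rationals$ that witnesses a recursive enumeration $(\multprocess_i)_{i\in\naturalswithzero}$ of \emph{all} lower semicomputable non-negative real supermartingale multipliers for $I$; thus $q'(i,\sit,x,n)$ is non-decreasing in $n$ and $\multprocess_i(\sit)(x)=\lim_{n\to\infty}q'(i,\sit,x,n)$ for all $i,n\in\naturalswithzero$, $\sit\in\sits$ and $x\in\posspace$. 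Then I would define $\test_i\coloneqq\mint[\multprocess_i]$ for each $i\in\naturalswithzero$ and claim that $(\test_i)_{i\in\naturalswithzero}$ is the enumeration we are after.

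The verification then splits into three routine parts. For \emph{membership}, \corollaryref{cor:from:D:to:M} tells us that $\mint[\multprocess_i]$ is a test supermartingale for $I$, and it is by construction generated by the lower semicomputable multiplier process $\multprocess_i$, so $\test_i\in\weakmltests{I}$. For \emph{recursive enumerability}, I would set $q''(i,\sit,n)\coloneqq\prod_{k=0}^{\abs{\sit}-1}q'(i,\sittok,\sitatkplus,n)$ for all $i,n\in\naturalswithzero$ and $\sit\in\sits$ (an empty product, equal to $1$, when $\sit=\init$): this is a finite product of non-negative recursive rationals, hence itself recursive, rational and non-negative; it is non-decreasing in $n$ because each of its factors is; and, since each factor converges to the finite non-negative number $\multprocess_i(\sittok)(\sitatkplus)$, the finite product converges to $\prod_{k=0}^{\abs{\sit}-1}\multprocess_i(\sittok)(\sitatkplus)=\test_i(\sit)$. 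Collecting these facts over $i$, the map $q''$ witnesses that $(\test_i)_{i\in\naturalswithzero}$ is a recursive enumeration of lower semicomputable real maps on $\sits$, and in particular that each $\test_i$ is lower semicomputable. For \emph{completeness}, I would take any lower semicomputable $\mint\in\weakmltests{I}$ that is generated by a lower semicomputable supermartingale multiplier $\multprocess$ for $I$; since a multiplier process only ``generates'' a process when it is non-negative, $\multprocess$ is a lower semicomputable non-negative real supermartingale multiplier for $I$, hence $\multprocess=\multprocess_i$ for some $i\in\naturalswithzero$ by the enumeration property, and therefore $\mint=\mint[\multprocess]=\mint[\multprocess_i]=\test_i$.

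I do not expect a genuine obstacle here: the mathematical content is essentially already in \lemmaref{lem:rec:enum:weakml:D}, and what remains is bookkeeping. The step that needs a little care is the recursive-enumerability part, where one must check that forming the finite products of the rational approximants $q'(i,\cdot,\cdot,n)$ simultaneously preserves recursiveness, monotonicity in $n$, and the limit; finiteness of all quantities involved is automatic since each $\test_i(\sit)$ is a finite product of finite numbers, and one may additionally note (via \lemmaref{lem:bounded:above}) that each $\multprocess_i$ is bounded by $\max\{\nicefrac{1}{\max I},\nicefrac{1}{1-\min I}\}$, so that the $\test_i$ are genuine real processes. A secondary point worth spelling out is that ``generated by a multiplier process'' presupposes non-negativity of that multiplier process, which is precisely what makes the completeness step reduce cleanly to \lemmaref{lem:rec:enum:weakml:D}.
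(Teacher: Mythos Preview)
Your proposal is correct and follows essentially the same route as the paper: apply \lemmaref{lem:rec:enum:weakml:D} to obtain a recursive enumeration $(\multprocess_i)_{i\in\naturalswithzero}$ of all lower semicomputable non-negative real supermartingale multipliers for $I$, form the finite products $q''(i,\sit,n)\coloneqq\prod_{k=0}^{\abs{\sit}-1}q'(i,\sittok,\sitatkplus,n)$ of the rational approximants, and invoke \corollaryref{cor:from:D:to:M} to conclude that $(\mint[\multprocess_i])_{i\in\naturalswithzero}$ is the desired recursive enumeration. Your explicit completeness argument and the remark on boundedness via \lemmaref{lem:bounded:above} are a bit more detailed than the paper's version, but the underlying idea is identical.
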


\begin{proof}
By \lemmaref{lem:rec:enum:weakml:D}, we know there is a recursive enumeration $\multprocess_i$, with $i \in \naturalswithzero$, of all lower semicomputable non-negative real supermartingale multipliers for I.
This means that there is a recursive map~$q \colon \naturalswithzero \times \sits \times \posspace \times \naturalswithzero \to \rationals$ such that 
\begin{enumerate}[label=\upshape(\roman*),leftmargin=*,noitemsep]
\item $q(i,\sit,x,n+1) \geq q(i,\sit,x,n)$ for all~$\sit \in \sits$, $x \in \posspace$ and~$i,n \in \naturalswithzero$; \label{step:universal:1}
\item for every~$i \in \naturalswithzero$, the gamble process $\multprocess_i\colon\sits \to \gambles$, defined by $\multprocess_i(\sit)(x) \coloneqq \lim_{n \to \infty}q(i,\sit,x,n)$ for all $\sit \in \sits$ and $x \in \posspace$, is a lower semicomputable non-negative real supermartingale multiplier for~$I$; \label{step:universal:2}
\item for every lower semicomputable non-negative real supermartingale multiplier $\multprocess$ for~$I$, there is an index $i \in \naturalswithzero$ such that $\lim_{n \to \infty}q(i,\sit,x,n)=\multprocess(\sit)(x)$ for all~$\sit \in \sits$ and~$x \in \posspace$. \label{step:universal:3}
\end{enumerate}

Consider the rational map~$q'\colon \naturalswithzero \times \sits \times \naturalswithzero \to \rationals$, defined by
\begin{equation*}
q'(i,\sit,n) \coloneqq \prod_{k=0}^{l-1}q(i,\xvalto{k},\xvalatkplus,n) \textrm{ for all } \sit=(x_1,\dots,x_l) \in \sits \textrm{ and } i,n \in \naturalswithzero.
\end{equation*}
Since $q$ is a recursive map, $q'$ is clearly a recursive map as well.
Moreover, since $q(i,\sit,x,n+1)\geq q(i,\sit,x,n)$ for all~$\sit \in \sits$, $x \in \posspace$ and~$i,n \in \naturalswithzero$, clearly also $q'(i,\sit,n+1)\geq q'(i,\sit,n+1)$ for all~$\sit \in \sits$ and~$i,n \in \naturalswithzero$.
For every~$i \in \naturalswithzero$, we consider the real process~$\mint_i$ defined by
\begin{equation}
\mint_i(\sit)= \prod_{k=0}^{l-1}\multprocess_i(\xvaltok)(\xvalatkplus) \textrm{ for all } \sit=(x_1,\dots,x_l) \in \sits.
\end{equation}
By \corollaryref{cor:from:D:to:M}, $\mint_i$ is a test supermartingale for~$I$.
It is clear that $\mint_i(\sit)=\lim_{n \to \infty} q'(i,\sit,n)$ for all~$i \in \naturalswithzero$ and~$\sit \in \sits$, so $\mint_i$ is also lower semicomputable.

We conclude that $(\mint_i)_{i \in \naturalswithzero}$ is a recursive enumeration of all lower semicomputable test supermartingales for~$I$ with lower semicomputable supermartingale multipliers for~$I$.
\qed
\end{proof}

\begin{lemma}[{\cite[Lemma 14]{floris2021}}] \label{lemma:14}
Consider any path \(\pth \in \pths\) and any interval forecast \(I \subset \group{0,1}\).
If \(\pth\) is recursive, then \(\pth\) is not \co-random for \(I\).
\end{lemma}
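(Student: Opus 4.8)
The plan is to show that $\pth$ is not \co-random for $I$ by exhibiting a recursive positive rational test supermartingale for $I$ that is unbounded on $\pth$; by \propositionref{prop:equivalence:compandrec} this is all that is needed. The underlying idea is elementary: since $\pth$ is recursive we can bet on its successive outcomes \emph{before} Reality reveals them, and since $I$ lies strictly inside $\group{0,1}$, such perfectly informed bets can be arranged so as to multiply capital by a fixed rational factor bigger than~$1$ at every step.

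First I would fix the betting factors. As $I$ is a closed interval with $I \subset \group{0,1}$, we have $0 < \min I \leq \max I < 1$. By density of the rationals, fix once and for all rationals $c_0, d_1 \in \group{0,1}$, a rational $c_1 \in \group{1, \nicefrac{1-(1-\max I)c_0}{\max I}}$, and a rational $d_0 \in \group{1, \nicefrac{1-\min I\cdot d_1}{1-\min I}}$; here the right endpoints genuinely exceed~$1$ — because $c_0 < 1$ in the first case and $\min I > 0$ in the second — so these are non-empty open intervals and such rationals exist. Note that I only invoke the \emph{existence} of these rationals, never an algorithm that produces them from $I$, so no computability hypothesis on $I$ is required.

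Next I would define a multiplier process $\multprocess\colon\sits\to\gambles$ that in each situation $\sit$ bets on the outcome $\pthatsplus$ dictated by $\pth$: if $\pthatsplus=1$, put $\multprocess(\sit)(1)\coloneqq c_1$ and $\multprocess(\sit)(0)\coloneqq c_0$; if $\pthatsplus=0$, put $\multprocess(\sit)(0)\coloneqq d_0$ and $\multprocess(\sit)(1)\coloneqq d_1$. Since $\pth$ is recursive and $c_0,c_1,d_0,d_1$ are fixed rationals, $\multprocess$ is a recursive positive rational process. To see that it is a supermartingale multiplier for the stationary forecasting system $\frcstsystem = I$, fix $\sit$: if $\pthatsplus=1$ then $\multprocess(\sit)(1)=c_1>1>c_0=\multprocess(\sit)(0)$, so by \equationref{eq:uex:3} $\uex_I(\multprocess(\sit)) = \max I\cdot c_1 + (1-\max I)c_0 \leq 1$ by the choice of $c_1$; the case $\pthatsplus=0$ is symmetric, using $\min I$ and the choice of $d_0$. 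Hence $\uex_I(\multprocess(\sit))\leq 1$ for all $\sit$, so $\multprocess$ is a recursive non-negative real supermartingale multiplier for $I$, and \corollaryref{cor:from:D:to:M} makes the generated process $\mint$ a recursive test supermartingale for $I$; being a product of positive rationals it is itself positive and rational.

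Finally I would verify that $\mint$ is unbounded on $\pth$. Along $\pth$, the factor applied between $\pthtok$ and $\pthtokplus$ is $\multprocess(\pthtok)(\pthatkplus)$, which equals $c_1$ when $\pthatkplus=1$ and $d_0$ when $\pthatkplus=0$, hence in either case is at least $r\coloneqq\min\set{c_1,d_0}>1$. Therefore $\mint(\pthton)=\prod_{k=0}^{n-1}\multprocess(\pthtok)(\pthatkplus)\geq r^{n}$ for every $n$, so $\limsup_{n\to\infty}\mint(\pthton)=\infty$. Thus $\mint$ is a recursive positive rational test supermartingale for $I$ that is unbounded on $\pth$, and \propositionref{prop:equivalence:compandrec} yields that $\pth$ is not \co-random for $I$. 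The point I would be most careful about is precisely the one already flagged: because $\min I$ and $\max I$ need not be computable, the choice of the rational betting factors has to be phrased as a pure existence claim (supplied by density of $\rationals$) rather than as an effective construction, so that the resulting betting strategy stays recursive while $I$ is allowed to be an arbitrary closed subinterval of $\group{0,1}$.
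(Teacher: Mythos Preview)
Your proof is correct. The paper does not supply its own argument for this lemma but imports it from \cite[Lemma~14]{floris2021}; your construction---a recursive temporal multiplier process that, knowing \(\pthatsplus\) in advance, multiplies capital by a fixed rational factor strictly greater than~\(1\) at every step---is the natural approach and invokes exactly the right tools from the present paper (\corollaryref{cor:from:D:to:M} and \propositionref{prop:equivalence:compandrec}).
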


\begin{lemma} \label{lem:rec:enum:weakml:T}
Consider any interval forecast~$I \subseteq \group{0,1}$ and any recursive enumeration $\mint_i$, with $i \in \naturalswithzero$, of lower semicomputable test supermartingales for~$I$ generated by supermartingale multipliers $\multprocess$.
Then the process~$\test$, defined by
\begin{equation}
\test(\sit) \coloneqq \sum_{i=0}^\infty 2^{-i-1} \mint_i(\sit)  \textrm{ for all } \sit \in \sits,
\end{equation}
is a lower semicomputable test supermartingale for~$I$.
\end{lemma}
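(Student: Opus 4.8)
The plan is to check, in order, the requirements on $\test$: that it is a well-defined non-negative real process with $\test(\init)=1$, that it is a supermartingale for $I$, and that it is lower semicomputable.

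\textbf{Finiteness and the value at $\init$.} First I would use that each $\mint_i$ is generated by a supermartingale multiplier $\multprocess_i$ for $I$, so $\multprocess_i(\sit)\geq 0$ and $\uex_I(\multprocess_i(\sit))\leq 1$ for all $\sit\in\sits$. Since $I\subseteq\group{0,1}$ we have $0<\max I$ and $\min I<1$, so \lemmaref{lem:bounded:above} yields $\multprocess_i(\sit)(x)\leq K$ for all $i\in\naturalswithzero$, $\sit\in\sits$ and $x\in\posspace$, with $K\coloneqq\max\{\nicefrac{1}{\max I},\nicefrac{1}{(1-\min I)}\}$. Hence $0\leq\mint_i(\sit)=\prod_{k=0}^{\abs{\sit}-1}\multprocess_i(\sittok)(\sitatkplus)\leq K^{\abs{\sit}}$, a bound uniform in $i$, so $0\leq\test(\sit)\leq K^{\abs{\sit}}\sum_{i=0}^{\infty}2^{-i-1}=K^{\abs{\sit}}<+\infty$ and $\test$ is a non-negative real process. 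Since moreover each $\mint_i$ is a test supermartingale, $\mint_i(\init)=1$, and therefore $\test(\init)=\sum_{i=0}^{\infty}2^{-i-1}=1$.

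\textbf{Supermartingale property.} Next I would fix $\sit\in\sits$ and show $\uex_I(\adddelta\test(\sit))\leq 0$, which by \ref{axiom:coherence:constantadditivity} is equivalent to $\uex_I(\test(\sit\,\bullet))\leq\test(\sit)$. For each $N\in\naturalswithzero$ the gamble $\sum_{i=0}^{N}2^{-i-1}\mint_i(\sit\,\bullet)$ converges pointwise to $\test(\sit\,\bullet)$ as $N\to\infty$, so by \ref{prop:coherence:uniformcontinuity} the reals $\uex_I\big(\sum_{i=0}^{N}2^{-i-1}\mint_i(\sit\,\bullet)\big)$ converge to $\uex_I(\test(\sit\,\bullet))$. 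For each $i$, since $\mint_i$ is a supermartingale for $I$, $\uex_I(\mint_i(\sit\,\bullet))=\uex_I(\adddelta\mint_i(\sit))+\mint_i(\sit)\leq\mint_i(\sit)$ by \ref{axiom:coherence:constantadditivity}; hence, by \ref{axiom:coherence:homogeneity} and \ref{axiom:coherence:subsupadditivity}, $\uex_I\big(\sum_{i=0}^{N}2^{-i-1}\mint_i(\sit\,\bullet)\big)\leq\sum_{i=0}^{N}2^{-i-1}\mint_i(\sit)\leq\test(\sit)$ for every $N$, and letting $N\to\infty$ gives $\uex_I(\test(\sit\,\bullet))\leq\test(\sit)$.

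\textbf{Lower semicomputability, and the main obstacle.} Because $(\mint_i)_{i\in\naturalswithzero}$ is a recursive enumeration of lower semicomputable test supermartingales, there is a recursive rational map $q\colon\naturalswithzero\times\sits\times\naturalswithzero\to\rationals$ with $q(i,\sit,n+1)\geq q(i,\sit,n)$ and $\mint_i(\sit)=\lim_{n\to\infty}q(i,\sit,n)$ for all $i,n\in\naturalswithzero$ and $\sit\in\sits$. I would first pass to $q'(i,\sit,n)\coloneqq\max\{0,q(i,\sit,n)\}$ (still recursive and rational, still non-decreasing in $n$, with the same limit $\mint_i(\sit)$ since $\mint_i(\sit)\geq 0$, and now satisfying $0\leq q'(i,\sit,n)\leq\mint_i(\sit)\leq K^{\abs{\sit}}$), and then set $q^\ast(\sit,n)\coloneqq\sum_{i=0}^{n}2^{-i-1}q'(i,\sit,n)$, which is manifestly recursive and rational and is non-decreasing in $n$ because the new summand $2^{-n-2}q'(n+1,\sit,n+1)$ is non-negative while no previous summand decreases. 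The crux is to prove $\lim_{n\to\infty}q^\ast(\sit,n)=\test(\sit)$; this is a diagonal limit, with the summation index and the approximation parameter tending to infinity together, and it is precisely where the uniform-in-$i$ bound $\mint_i(\sit)\leq K^{\abs{\sit}}$ from \lemmaref{lem:bounded:above} does the work. I would write $\test(\sit)-q^\ast(\sit,n)=\sum_{i=0}^{n}2^{-i-1}\big(\mint_i(\sit)-q'(i,\sit,n)\big)+\sum_{i=n+1}^{\infty}2^{-i-1}\mint_i(\sit)$, bound the tail by $K^{\abs{\sit}}2^{-n-1}\to 0$, and for the head fix an arbitrary $m$ and split it at $i=m$: the first $m+1$ terms vanish as $n\to\infty$ by termwise convergence, while the remaining terms are bounded by $K^{\abs{\sit}}\sum_{i=m+1}^{\infty}2^{-i-1}=K^{\abs{\sit}}2^{-m-1}$, so the head tends to $0$ after also letting $m\to\infty$. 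This shows $q^\ast$ witnesses the lower semicomputability of $\test$, which together with the previous two parts establishes that $\test$ is a lower semicomputable test supermartingale for $I$. Apart from this diagonalisation (and the small point that truncating at $0$ is what keeps $q^\ast$ honestly non-decreasing in $n$), every step is routine.
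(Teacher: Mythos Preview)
Your proof is correct and follows essentially the same approach as the paper's: both use \lemmaref{lem:bounded:above} to obtain the uniform bound $\mint_i(\sit)\leq K^{\abs{\sit}}$ that guarantees finiteness of the sum and drives the diagonal-limit argument for lower semicomputability, and both derive the supermartingale inequality via \ref{axiom:coherence:homogeneity}, \ref{axiom:coherence:subsupadditivity} and \ref{prop:coherence:uniformcontinuity}. The only cosmetic differences are that the paper applies these properties to $\adddelta\test(\sit)$ rather than $\test(\sit\,\bullet)$, and phrases the convergence $q^\ast(\sit,n)\to\test(\sit)$ as an $\epsilon$-$N$ argument rather than your $\limsup$ argument with a secondary cutoff $m$.
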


\begin{proof}
Since $(\mint_i)_{i \in \naturalswithzero}$ is a recursive enumeration of lower semicomputable test supermartingales for~$I$, there is some recursive rational map~$q \colon \naturalswithzero \times \sits \times \naturalswithzero \to \rationals$ such that $q(i,\sit,n+1) \geq q(i,\sit,n)$ and~$\lim_{m \to \infty}q(i,\sit,m)=\mint_i(\sit)$ for all~$\sit \in \sits$ and~$i,n \in \naturalswithzero$.
We can assume without loss of generality that $q$ is non-negative. 
Otherwise, we just consider $\max\{0,q\}$ instead.

First, since $\mint_i(\init)=1$ for all~$i \in \naturalswithzero$, it follows that $\test(\init)=\sum_{i=0}^\infty 2^{-i-1}=1$.
Second, since $\multprocess_i(\sit)\geq0$ and~$\uex_I(\multprocess_i(\sit))\leq 1$ for all~$\sit \in \sits$, and since $I \subseteq \group{0,1}$, it follows from \lemmaref{lem:bounded:above} that $\multprocess_i(\sit) \leq \max\{\nicefrac{1}{\max I},\nicefrac{1}{(1-\min I)}\}$ for all~$\sit \in \sits$.
Let $K \coloneqq \max\{\nicefrac{1}{\max I},\nicefrac{1}{(1-\min I)}\}$.
Consequently, 
\begin{align*}
\mint_i(\sit)
=
\prod_{k=0}^{l-1} \multprocess_i(\xvaltok)(\xvalatkplus)
\leq
K^{l} \textrm{ for all } \sit=(x_1,\dots,x_l) \in \sits \textrm{ and } i \in \naturalswithzero  ,
\shortintertext{and hence,}
\test(\sit)
=
\sum_{i=0}^\infty 2^{-i-1} \mint_i(\sit)
\leq
\sum_{i=0}^\infty 2^{-i-1} K^{\abs{\sit}}
=
K^{\abs{\sit}}
\textrm{ for all } \sit \in \sits.
\end{align*}
Since, for every~$\sit \in \sits$, $\test(\sit)$ is an infinite sum of non-negative terms that is bounded above by $K^{\abs{\sit}}$, it follows that $\test$ is well-defined, real and non-negative.
Third, since 
\begin{equation*}
\adddelta \test(\sit) = \lim_{n \to \infty} \sum_{i=0}^{n}2^{-i-1}\adddelta \mint_i(\sit) \textrm{ for all } \sit \in \sits,
\end{equation*}
and since $\posspace$ is a finite set, it follows from \ref{prop:coherence:uniformcontinuity} that
\begin{align*}
\uex_I(\adddelta \test(\sit)) 
&=
\lim_{n \to \infty} \uex_I(\sum_{i=0}^{n}2^{-i-1}\adddelta \mint_i(\sit)) \\
\overset{\textrm{\ref{axiom:coherence:homogeneity}}-\textrm{\ref{axiom:coherence:subsupadditivity}}}&{\leq}
\limsup_{n \to \infty} \sum_{i=0}^{n}2^{-i-1} \uex_I(\adddelta \mint_i(\sit)) \\
&\leq
\limsup_{n \to \infty} \sum_{i=0}^{n} 2^{-i-1} 0
=
0
\textrm{ for all } \sit \in \sits,
\end{align*}
and therefore $\test$ is a supermartingale for~$I$.
Last, in order to prove that $\test$ is lower semicomputable, consider the rational map~$q^\ast\colon \sits \times \naturalswithzero \to \rationals$ defined by
\begin{equation*}
q^\ast(\sit,n) \coloneqq \sum_{i=0}^{n} 2^{-i-1} q(i,\sit,n) \textrm{ for all } \sit \in \sits \textrm{ and } n \in \naturalswithzero.
\end{equation*}
First, since $q$ is a recursive map and since taking the above finite weighted sum is a recursive operation, the map~$q^\ast$ is recursive as well.
Second, since $q(i,\sit,n)\geq 0$ and~$q(i,\sit,n+1) \geq q(i,\sit,n)$ for all~$\sit \in \sits$ and~$i,n \in \naturalswithzero$, it follows that
\begin{multline*}
q^\ast(\sit,n+1)
=
\sum_{i=0}^{n+1} 2^{-i-1} q(i,\sit,n+1) \\
\geq
\sum_{i=0}^{n} 2^{-i-1} q(i,\sit,n+1)
\geq
\sum_{i=0}^{n} 2^{-i-1} q(i,\sit,n)
=
q^\ast(\sit,n)
\end{multline*}
for all~$\sit \in \sits$ and~$n \in \naturalswithzero$.
Last, we will show that $\lim_{n \to \infty}q^\ast(\sit,n)=\test(\sit)$ for all~$\sit \in \sits$.
To this end, note that since $q(i,\sit,n)\leq \mint_i(\sit)$ for all~$\sit \in \sits$ and~$i,n \in \naturalswithzero$, it holds that
\begin{multline*}
q^\ast(\sit,n)
=
\sum_{i=0}^{n} 2^{-i-1} q(i,\sit,n)
\leq
\sum_{i=0}^{n} 2^{-i-1} \mint_i(\sit)
\leq
\sum_{i=0}^{\infty} 2^{-i-1} \mint_i(\sit)
=
\test(\sit),
\end{multline*}
for all~$\sit \in \sits$ and~$n \in \naturalswithzero$.
Thus, since $q^\ast(\sit ,n)$ is an increasing sequence in $n$ and since $q^\ast(\sit ,n)$ is bounded above by $\test(\sit)$ for all~$\sit \in \sits$ and~$n \in \naturalswithzero$, $\lim_{n \to \infty}q^\ast(\sit ,n)$ is well-defined and bounded above by $\test(\sit)$ for all~$\sit \in \sits$.
Fix any~$\sit \in \sits$, any~$\epsilon>0$ and any~$N \in \naturalswithzero$ such that $K^{\abs{\sit}}2^{-N}<\nicefrac{\epsilon}{4}$.
By recalling that $\mint_i(\sit) \leq K^{\abs{\sit}}$ for all~$i \in \naturalswithzero$, we infer that
\begin{equation*}
\frac{\mint_i(\sit)}{2^{N}}
\leq
\frac{K^{\abs{\sit}}}{2^{N}}
<
\frac{\epsilon}{4} \textrm{ for all } i \in \naturalswithzero.
\end{equation*}
Consequently,
\begin{align}
\sum_{i=N+1}^\infty 2^{-i-1} \mint_i(\sit)
&=
\sum_{i=N+1}^\infty 2^{-i-1+N} \frac{\mint_i(\sit)}{2^{N}} \nonumber \\
&\leq
\sum_{i=N+1}^\infty 2^{-i-1+N} \frac{\epsilon}{4}
=
\sum_{i=0}^\infty 2^{-i} \frac{\epsilon}{4}
=
\frac{\epsilon}{2}. \label{eq:part:1}
\end{align}
Since $q(i,\sit,n+1) \geq q(i,\sit,n) \geq 0$ and~$\mint_i(\sit)=\lim_{m \to \infty} q(i,\sit,m)$ for all~$i,n \in \naturalswithzero$, there is for every~$i \leq N$ some~$N_i \in \naturalswithzero$ such that
\begin{equation*}
0 \leq \mint_i(\sit)- q(i,\sit,N_i) < \frac{\epsilon}{4} \textrm{ for every } i \leq N.
\end{equation*}
Consequently,
\begin{equation}
\sum_{i=0}^{N} 2^{-i-1}(\mint_i(\sit)-q(i,\sit,N_i))
\leq
\sum_{i=0}^{N} 2^{-i-1} \frac{\epsilon}{4}
\leq
\sum_{i=0}^{\infty} 2^{-i-1} \frac{\epsilon}{4}
=
\frac{\epsilon}{2}. \label{eq:part:2}
\end{equation}
Let $N_\textrm{max}\coloneqq\max\{N,N_1,N_2,\dots,N_N\}$.
Then by \equationref{eq:part:1} and \eqref{eq:part:2} for all~$n \geq N_\textrm{max}$
\begin{align*}
\test(\sit)-q^\ast_{\sit,n}
&=
\sum_{i=0}^\infty 2^{-i-1} \mint_i(\sit)-\sum_{i=0}^{n} 2^{-i-1} q(i,\sit,n) \\
&=
\sum_{i=0}^{N} 2^{-i-1} \mint_i(\sit) + \sum_{i=N+1}^\infty 2^{-i-1} \mint_i(\sit) - \sum_{i=0}^{n} 2^{-i-1} q(i,\sit,n) \\
&\leq
\sum_{i=0}^{N} 2^{-i-1} \mint_i(\sit) + \sum_{i=N+1}^\infty 2^{-i-1} \mint_i(\sit) - \sum_{i=0}^{N_\textrm{max}} 2^{-i-1} q(i,\sit,N_\textrm{max}) \\
&\leq
\sum_{i=0}^{N} 2^{-i-1} \mint_i(\sit) + \sum_{i=N+1}^\infty 2^{-i-1} \mint_i(\sit) - \sum_{i=0}^{N_\textrm{max}} 2^{-i-1} q(i,\sit,N_i) \\
&\leq
\sum_{i=0}^{N} 2^{-i-1} \mint_i(\sit) + \sum_{i=N+1}^\infty 2^{-i-1} \mint_i(\sit) - \sum_{i=0}^{N} 2^{-i-1} q(i,\sit,N_i) \\
&=
\sum_{i=0}^{N} 2^{-i-1} (\mint_i(\sit)-q(i,\sit,N_i)) + \sum_{i=N+1}^\infty 2^{-i-1} \mint_i(\sit) \\
\overset{\eqref{eq:part:1}-\eqref{eq:part:2}}&{\leq}
\frac{\epsilon}{2}+\frac{\epsilon}{2}
=
\epsilon.
\end{align*}
Since this is true for every~$\epsilon>0$, it indeed holds that $\lim_{n \to \infty}q^\ast(\sit,n)=\test(\sit)$.
We conclude that $\test$ is a lower semicomputable test supermartingale for~$I$.
\qed
\end{proof}

\begin{lemma} \label{lem:ml:rec}
For every rational interval forecast~$I \subseteq \group{0,1}$, there is a lower semicomputable test supermartingale~$\test \in \testsml{I}$ such that $\limsup_{n \to \infty} \test(\pthton)=\infty$ for every path~$\pth \in \pths$ that is not \wml-random for~$I$.
\end{lemma}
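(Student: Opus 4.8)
The plan is to let $\test$ be a \emph{universal} lower semicomputable test supermartingale for the weak Martin-Löf class; being lower semicomputable it automatically lies in the larger class $\testsml{I}$, so it will suffice to show it detects every path that fails to be \wml-random for $I$. The construction is already at hand: by \corollaryref{cor:rec:enum:weakml:T} there is a recursive enumeration $(\mint_i)_{i \in \naturalswithzero}$ of all lower semicomputable test supermartingales for $I$ that are generated by lower semicomputable supermartingale multipliers for $I$, and by \lemmaref{lem:rec:enum:weakml:T} the $2$-adic mixture
\begin{equation*}
\test(\sit) \coloneqq \sum_{i=0}^\infty 2^{-i-1}\mint_i(\sit) \textrm{ for all } \sit \in \sits
\end{equation*}
is again a lower semicomputable test supermartingale for $I$, hence $\test \in \testsml{I}$.

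Now suppose $\pth \in \pths$ is not \wml-random for $I$. By \definitionref{def:notionsofrandomness} with $\random = \wml$, there is a test supermartingale $\test' \in \weakmltests{I}$, generated by some lower semicomputable multiplier process~$\multprocess$, with $\limsup_{n \to \infty}\test'(\pthton) = \infty$. The one step that requires care is to check that $\test'$ also appears in the enumeration $(\mint_i)_{i \in \naturalswithzero}$, i.e.\ that it is generated by a lower semicomputable \emph{supermartingale} multiplier for $I$. At every situation $\sit$ with $\test'(\sit) > 0$ the generating multiplier satisfies $\multprocess(\sit) \geq 0$ and, since $\test'$ is a supermartingale, $\uex_{I}(\multprocess(\sit)) \leq 1$ (this is the computation behind \lemmaref{lem:toandback:M:D}); at a situation where $\test'$ vanishes, $\test'$ is identically zero from there on, so the value of the multiplier there is irrelevant to $\test'$ and may be replaced — in a lower semicomputable fashion — by the truncation device used in the proof of \lemmaref{lem:rec:enum:weakml:D}. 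This yields a lower semicomputable non-negative supermartingale multiplier for $I$ generating the same $\test'$, so indeed $\test' = \mint_j$ for some $j \in \naturalswithzero$.

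Finally, since all summands of $\test$ are non-negative, $\test(\sit) \geq 2^{-j-1}\mint_j(\sit)$ for every $\sit \in \sits$, and therefore
\begin{equation*}
\limsup_{n \to \infty}\test(\pthton) \geq 2^{-j-1}\limsup_{n \to \infty}\mint_j(\pthton) = \infty,
\end{equation*}
which is what we wanted. All the substance — the recursive enumeration and the fact that its mixture is once more a lower semicomputable test supermartingale — is carried by \corollaryref{cor:rec:enum:weakml:T} and \lemmaref{lem:rec:enum:weakml:T}; the only genuinely delicate point in the present argument, and the one I would spell out in full, is the identification of the definitional class $\weakmltests{I}$ with the class of test supermartingales generated by lower semicomputable supermartingale multipliers, since this is precisely what licenses the claim that the unbounded witness $\test'$ is one of the $\mint_i$.
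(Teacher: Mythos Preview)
Your proof follows the paper's approach exactly: build the mixture $\test=\sum_i 2^{-i-1}\mint_i$ via \corollaryref{cor:rec:enum:weakml:T} and \lemmaref{lem:rec:enum:weakml:T}, then dominate a witness in the enumeration. You go beyond the paper by flagging the only delicate point, which the paper's proof leaves implicit: the enumeration of \corollaryref{cor:rec:enum:weakml:T} lists test supermartingales generated by lower semicomputable \emph{supermartingale} multipliers, whereas the definitional class $\weakmltests{I}$ merely asks for a lower semicomputable multiplier process, and at situations with $\test'(\sit)=0$ the generating $\multprocess$ need not satisfy $\uex_I(\multprocess(\sit))\leq 1$. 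Your fix via the truncation device of \lemmaref{lem:rec:enum:weakml:D} is correct---at situations where $\test'$ is positive the supermartingale inequality forces $\uex_I(\multprocess(\sit))\leq 1$ and the truncation is vacuous, while at situations where $\test'$ vanishes the truncation may alter $\multprocess$ but cannot alter $\mint$---so the enumeration does in fact cover all of $\weakmltests{I}$, and the witness $\test'$ is indeed some $\mint_j$.
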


\begin{proof}
By invoking \corollaryref{cor:rec:enum:weakml:T}, let $\mint_i$, with $i \in \naturalswithzero$, be a recursive enumeration of all lower semicomputable test supermartingales for~$I$ generated by lower semicomputable supermartingale multipliers for $I$.
By \lemmaref{lem:rec:enum:weakml:T}, the process~$\test$ defined by
\begin{equation}
\test(\sit) \coloneqq \sum_{i=0}^\infty 2^{-i-1} \mint_i(\sit)  \textrm{ for all } \sit \in \sits,
\end{equation}
is a lower semicomputable test supermartingale for~$I$.

\begin{comment}
Consequently, there is some recursive net $q(i,\sit,n)$ such that $r_{i,\sit,n+1} \geq q(i,\sit,n)$ and~$\lim_{n \to \infty}q(i,\sit,n)=\mint_i(\sit)$ for all~$\sit \in \sits$ and~$i,n \in \naturalswithzero$.
We can assume without loss of generality that $q(i,\sit,n)$ is non-negative. 
Otherwise, we just consider $\max\{0,q(i,\sit,n)\}$ instead.

Consider the process~$\test$ defined by
\begin{equation*}
\test(\sit)\coloneqq \sum_{i=0}^\infty 2^{-i-1} \mint_i(\sit) \textrm{ for all } \sit \in \sits.
\end{equation*}
We intend to show that $\test$ is the lower semicomputable non-negative real test supermartingale for~$I$ we are looking for.
\end{comment}

Consider a path~$\pth \in \pths$ that is not \wml-random for~$I$.
By \definitionref{def:notionsofrandomness}, there is some~$i' \in \naturalswithzero$ such that $\limsup_{n \to \infty} \mint_{i'}(\pthton)=\infty$.
%Since $I \subseteq \group{0,1}$, it holds by \lemmaref{lemma:2} that $\pth$ is not computably random for~$I$, and hence, $\pth$ is not weak Martin-Löf random for~$I$.
%By \definitionref{def:weakML}, there is some lower semicomputable non-negative real supermartingale multiplier $\multprocess$ for~$I$ for which $\limsup_{n \to \infty}\mint(\pthton)=\infty$.
%Consequently, there is some~$i' \in \naturals$ such that $\mint(\sit)=\mint_{i'}(\sit)$ for all~$\sit \in \sits$.
Consequently, it also holds that $\limsup_{n \to \infty}2^{-i'-1}\mint_{i'}(\pthton)=\infty$, and hence, since all~$\mint_i$ are non-negative,
\begin{align*}
\limsup_{n \to \infty} \test(\pthton)
=
\limsup_{n \to \infty} \sum_{i=0}^{\infty} 2^{-i-1}\mint_i(\pthton)
\geq
\limsup_{n \to \infty} 2^{-i'-1}\mint_{i'}(\pthton)
=
\infty.
\end{align*}
\qed
\end{proof}

For the following lemma and theorem, we drew inspiration from \cite{Schnorr1971}, in which Schnorr shows there is a difference between Martin-Löf randomness and its conjugate notion in terms of upper semicomputable test supermartingales.

\begin{lemma} \label{lem:recursive:path}
For every recursive positive rational supermartingale $\supermartin$ %\in \supermartinscomp{\nicefrac{1}{2}}$
for~$\nicefrac{1}{2}$, every positive real $y \in \posreals$ and every situation~$\sit \in \sits$ for which $\supermartin(t)\leq y$ for all~$t \precedes \sit$, there is a recursive path~$\pth \in \pths$ such that $\pthto{\abs{\sit}}=\sit$ and~$\supermartin(\pthton) \leq y$ for all~$n \in \naturalswithzero$.
\end{lemma}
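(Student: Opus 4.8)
The plan is to construct $\pth$ by greedily descending through the tree of situations, always moving into the child along which $\supermartin$ does not increase. The only ingredient needed is a reformulation of the supermartingale property at the precise forecast $\nicefrac{1}{2}$: for every situation $t\in\sits$ we have $\frac12\bigl(\supermartin(t1)-\supermartin(t)\bigr)+\frac12\bigl(\supermartin(t0)-\supermartin(t)\bigr)\le 0$, which is equivalent to $\supermartin(t0)+\supermartin(t1)\le 2\supermartin(t)$, and hence to $\min\{\supermartin(t0),\supermartin(t1)\}\le\supermartin(t)$. In words: at least one of the two children of $t$ carries a value no larger than that of $t$. (Positivity of $\supermartin$ plays no role in this argument.)

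First I would set $\pthto{\abs{\sit}}\coloneqq\sit$, and then extend the path for $n\ge\abs{\sit}$ by the rule: put $\pthatnplus\coloneqq 0$ if $\supermartin(\pthton\,0)\le\supermartin(\pthton)$, and $\pthatnplus\coloneqq 1$ otherwise. By the reformulation above this guarantees $\supermartin(\pth_{1:n+1})\le\supermartin(\pthton)$ for every $n\ge\abs{\sit}$: if the first alternative applies this is immediate, while if the second applies then $\supermartin(\pthton\,0)>\supermartin(\pthton)$, which combined with $\supermartin(\pthton\,0)+\supermartin(\pthton\,1)\le 2\supermartin(\pthton)$ forces $\supermartin(\pthton\,1)<\supermartin(\pthton)$. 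A trivial induction on $n$ then gives $\supermartin(\pthton)\le\supermartin(\sit)$ for all $n\ge\abs{\sit}$, and since $\sit\precedes\sit$ the hypothesis yields $\supermartin(\sit)\le y$; hence $\supermartin(\pthton)\le y$ for all $n\ge\abs{\sit}$. For $n\le\abs{\sit}$ we have $\pthton\precedes\sit$, so $\supermartin(\pthton)\le y$ holds directly by hypothesis. Thus $\pthto{\abs{\sit}}=\sit$ and $\supermartin(\pthton)\le y$ for all $n\in\naturalswithzero$, as required.

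It remains to check that $\pth$ is recursive, which is exactly where the hypothesis that $\supermartin$ is a recursive, rational-valued process is used. Because $\supermartin$ is recursive and rational-valued, there is a finite algorithm that, for any $t\in\sits$, outputs the rationals $\supermartin(t)$, $\supermartin(t0)$ and $\supermartin(t1)$; comparing two rationals is decidable, so the selection rule above is decidable. Consequently the map $n\mapsto\pthton$ is recursive: on input $n$ one returns $\sit$ when $n\le\abs{\sit}$, and otherwise computes $\pthatsplus,\dots,\pthatn$ one at a time by iterating the rule, each step being a finite computation. This shows $\pth$ is recursive and completes the proof. I do not expect a genuine obstacle here; the only points requiring a little care are the two-case verification that the chosen child never increases $\supermartin$, and the bookkeeping that separates the regime $n\le\abs{\sit}$, where the bound is assumed, from the regime $n\ge\abs{\sit}$, where it is produced by the descent.
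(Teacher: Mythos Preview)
Your proof is correct and follows essentially the same greedy-descent strategy as the paper: extend the given situation one step at a time, always picking a child along which $\supermartin$ does not increase, and observe that recursiveness of the rational-valued $\supermartin$ makes this choice decidable. The only cosmetic difference is that the paper phrases the key step via the multiplier process $\multprocess_\supermartin$ (invoking that some $x$ satisfies $\multprocess_\supermartin(\pthton)(x)\le 1$), whereas you work directly with the additive inequality $\supermartin(t0)+\supermartin(t1)\le 2\supermartin(t)$; your formulation is slightly more elementary and, as you correctly note, does not actually need positivity of $\supermartin$.
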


\begin{proof}
The path~$\pth \in \pths$ will be constructed recursively by an induction argument.
First, we put $\pthto{\abs{\sit}} \coloneqq \sit$.
Now, assume that $\pthton$ has been constructed recursively such that $\supermartin(\pthto{i}) \leq y$ for all~$i \leq n$; note that this assumption holds trivially for~$n\leq\abs{\sit}$.
By \corollaryref{cor:from:M:to:D}, we know that $\multprocess_\supermartin$ is a recursive positive rational supermartingale multiplier for~$\nicefrac{1}{2}$.
Since $\multprocess_\supermartin$ is a supermartingale multiplier for~$\nicefrac{1}{2}$, there is an $x \in \posspace$ for which $\multprocess_\supermartin(\pthton)(x) \leq 1$.
Otherwise, $\multprocess_\supermartin(\pthton)(x)>1$ for all~$x \in \posspace$, and therefore $\ex_{\nicefrac{1}{2}}(\multprocess_\supermartin(\pthton))=\frac{1}{2}\multprocess_\supermartin(\pthton)(1)+\frac{1}{2}\multprocess_\supermartin(\pthton)(0) >1$, a contradiction.
Since $\multprocess_\supermartin$ is a recursive rational multiplier process, we can recursively determine an $x \in \posspace$ for which $\multprocess_\supermartin(\pthton)(x) \leq 1$.
We fix such an $x \in \posspace$ and put $\pthtonplus\coloneqq \pthton x$.
Clearly, $\supermartin(\pthtonplus) = \supermartin(\pthton)\multprocess_\supermartin(\pthton)(\pthatnplus) \leq \supermartin(\pthton) \leq y$, and hence, $\supermartin(\pthto{i}) \leq y$ for all~$i \leq n+1$.
\qed
\end{proof}

\begin{proposition} \label{prop:diff:ml:comp}
For any rational interval forecast~$I \subseteq \group{0,1}$, there is a path~$\pth \in \pths$ that is \co-random for~$\nicefrac{1}{2}$, but not \ml-random for~$I$.
\end{proposition}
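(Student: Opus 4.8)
The plan is to build the path $\pth$ by a stagewise construction, reducing its two required properties to conditions we already have tools for. For ``$\pth$ not Martin-L\"of random for $I$'' I will use the lower semicomputable test supermartingale $\test\in\testsml{I}$ provided by \lemmaref{lem:ml:rec}: it is unbounded along every path that is not weak-ML-random for $I$, and since $\test$ is itself a lower semicomputable test supermartingale for $I$, as soon as $\test$ is unbounded along $\pth$, \definitionref{def:notionsofrandomness} gives that $\pth$ is not ML-random for $I$. For ``$\pth$ computably random for $\nicefrac{1}{2}$'' I will use \propositionref{prop:equivalence:compandrec}: it suffices that every recursive positive rational test supermartingale for $\nicefrac{1}{2}$ is bounded along $\pth$. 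So fix a recursive enumeration $(\supermartin_j)_{j\geq1}$ of all recursive positive rational test supermartingales for $\nicefrac{1}{2}$, obtained from an enumeration of all recursive rational processes by truncating each one (freezing it as soon as a candidate step would break positivity or the $\nicefrac{1}{2}$-supermartingale inequality), which is the identity on the processes already of that form, so that every recursive positive rational test supermartingale for $\nicefrac{1}{2}$ occurs in the list.

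Now build situations $\square=\sigma_0\sqsubseteq\sigma_1\sqsubseteq\cdots$ and set $\pth:=\lim_k\sigma_k$, as follows. At stage $k\geq1$, starting from $\sigma_{k-1}$, form a recursive positive rational supermartingale $\supermartin^{(k)}$ for $\nicefrac{1}{2}$ that dominates suitable positive multiples of $\supermartin_1,\dots,\supermartin_k$, and apply \lemmaref{lem:recursive:path} from $\sigma_{k-1}$ to get a recursive path $\rho^{(k)}$ through $\sigma_{k-1}$ along which $\supermartin^{(k)}$ --- hence each of $\supermartin_1,\dots,\supermartin_k$ --- stays bounded. Since $\rho^{(k)}$ is recursive, \lemmaref{lemma:14} gives that $\rho^{(k)}$ is not computably random for $I$, hence not weak-ML-random for $I$ (as weak-ML-randomness implies computable randomness), hence by \lemmaref{lem:ml:rec} we have $\limsup_n\test(\rho^{(k)}_{1:n})=\infty$. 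Dovetailing the lower approximation $q$ of $\test$ over pairs, we can therefore effectively find $m>|\sigma_{k-1}|$ and $n$ with $q(\rho^{(k)}_{1:m},n)\geq k$; put $\sigma_k:=\rho^{(k)}_{1:m}$, so $\test(\sigma_k)\geq k$ and $|\sigma_k|>|\sigma_{k-1}|$. Then $\pth$ is a genuine path, and since the $\sigma_k$ are nested prefixes of $\pth$ with $|\sigma_k|\to\infty$ and $\test(\sigma_k)\geq k$, we get $\limsup_n\test(\pthton)=\infty$, so $\pth$ is not ML-random for $I$. For computable randomness: each $\supermartin_j$ has only finitely many positions before stage $j$, and from stage $j$ on it is held in check by the stagewise domination, so $\sup_n\supermartin_j(\pthton)<\infty$, and \propositionref{prop:equivalence:compandrec} then yields that $\pth$ is computably random for $\nicefrac{1}{2}$.

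The hard part --- essentially the whole content of the proof --- is the bookkeeping behind ``form $\supermartin^{(k)}$ so that each $\supermartin_j$ stays bounded along all of $\pth$''. A recursive positive rational $\nicefrac{1}{2}$-test supermartingale can double at each step, so no fixed geometric weighting of the $\supermartin_j$ survives the unbounded total length consumed across stages; nor can a single requirement be closed off permanently after a finite stage, since the $\test$-pushing and later requirements keep re-routing the path. The construction must therefore be run as a priority argument: each requirement ``$\sup_n\supermartin_j(\pthton)<\infty$'' is assigned a priority, acts only while $\supermartin_j$ sits above a chosen threshold, and is served on the stage-$k$ segment by steering along a $\supermartin_j$-non-increasing direction (one always exists, since $\supermartin_j$ is a $\nicefrac{1}{2}$-supermartingale); a careful analysis --- showing that each requirement is eventually no longer displaced by the higher-priority ones --- then gives the bounds. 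Interleaving this bookkeeping with the $\test$-pushing of the previous paragraph (re-deriving at each re-routing that the new recursive defending path is not weak-ML-random for $I$, so that $\test$ is still unbounded along it) is where the real work lies, and is the step I expect to cost the most effort.

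As a sanity check: in the sub-case $\min I>\nicefrac{1}{2}$ or $\max I<\nicefrac{1}{2}$ no construction is needed, since any path that is computably random for $\nicefrac{1}{2}$ has relative frequency of ones equal to $\nicefrac{1}{2}\notin I$ by \propertyref{prop:law:large:numbers}, hence is not weak-ML-random --- and therefore not ML-random --- for $I$; this is consistent with, and subsumed by, the general construction.
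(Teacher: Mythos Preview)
Your overall framework is exactly the paper's: use \lemmaref{lem:ml:rec} for the target $\test^*\in\mltests{I}$, enumerate the recursive positive rational test supermartingales for $\nicefrac{1}{2}$ (call them $\test_0,\test_1,\dots$), and build $\pth$ stagewise, at each stage extending along a recursive path supplied by \lemmaref{lem:recursive:path} so that the current weighted combination stays small, then invoking \lemmaref{lemma:14} and \lemmaref{lem:ml:rec} to push $\test^*$ above the next threshold. The part you flag as ``the hard part'' is, however, where your proposal has a genuine gap: you correctly observe that a fixed geometric weighting $\sum_j 2^{-j}\test_j$ cannot survive the unbounded lengths consumed across stages, but the priority argument you sketch as a replacement is both unfinished and unnecessary.

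The missing idea is a \emph{position-dependent} weighting. At the end of stage $i$ you are at position $n_i$ on $\pth$; when you bring $\test_{i+1}$ into the combination, give it weight $2^{-n_{i+1}-(i+1)}$ rather than $2^{-(i+1)}$. Since any test supermartingale for $\nicefrac{1}{2}$ satisfies $\test_{i+1}(\sit)\leq 2^{\abs{\sit}}$, this guarantees $2^{-n_{i+1}-(i+1)}\test_{i+1}(\pthton)\leq 2^{-(i+1)}$ for all $n\leq n_{i+1}$, so the new term automatically respects the bound on the segment already laid down---no retroactive damage. For the future, at stage $i$ you apply \lemmaref{lem:recursive:path} to the single recursive positive rational supermartingale
\[
\test'_{i}(\sit)\coloneqq\sum_{k=0}^{i}2^{-n_k-k}\test_k(\sit),
\]
with bound $y=\sum_{k=0}^{i}2^{-k}$; the induction hypothesis is precisely that $\test'_{i}(\pthton)\leq y$ for all $n\leq n_i$, and the lemma hands you a recursive extension along which this persists for all $n$, which you then cut at some $n_{i+1}>n_i$ with $\test^*(\pthto{n_{i+1}})\geq i+1$. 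In the limit $\sum_{k=0}^\infty 2^{-n_k-k}\test_k(\pthton)\leq 2$ for every $n$, so each $\test_k$ is bounded on $\pth$, and together with $\test^*(\pthto{n_i})\geq i$ this finishes the proof in a few lines---no priority machinery needed.

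Two minor remarks. First, the enumeration $(\test_j)$ need not be recursive: you are only proving \emph{existence} of $\pth$, and at each stage the finite combination $\test'_i$ is recursive because each summand is and the coefficients are fixed rationals; your truncation step is therefore harmless but superfluous. Second, your sanity-check sub-case is correct but, as you note, subsumed by the general construction.
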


\begin{proof}
Fix any rational interval forecast~$I \subseteq \group{0,1}$.
Due to \lemmaref{lem:ml:rec}, there is a lower semicomputable test supermartingale~$\test^\ast \in \overline{\mathbb{T}}_\ml(I)$ such that $\test^\ast$ is unbounded on every path~$\pth \in \pths$ that is not \wml-random for~$I$.
% be a lower semicomputable test supermartingale for~$\sqgroup{\delta,1-\delta}$ as in \lemmaref{lem:ml:rec}, and 
Let $(\test_i)_{i \in \naturalswithzero}$ be an enumeration (not necessarly recursive) of all recursive positive rational test supermartingales for~$\nicefrac{1}{2}$.
By \definitionref{def:notionsofrandomness} and \propositionref{prop:equivalence:compandrec}, it suffices to define a path~$\pth \in \pths$ for which $\limsup_{n \to \infty}\test^\ast(\pthton)=\infty$ and~$\limsup_{n \to \infty}\test_i(\pthton)<\infty$ for all~$i \in \naturalswithzero$.
%but computably random for~$\nicefrac{1}{2}$.

We prove by induction that such a path exists.
for~$i=0$, we let $n_i\coloneqq 0$ and~$\pthto{n_i}\coloneqq \init$.
%The induction step is given by $i \in \naturalswithzero$.
Consider any~$i \in \naturalswithzero$ and assume that $\pthto{n_i}$, with $0=n_0 < n_1 < \dots < n_i \in \naturalswithzero$, is defined such that $\test^\ast(\pthto{n_i})\geq i$ and
\begin{equation}
\sum_{k=0}^i 2^{-n_k-k}\test_k (\pthton)
\leq
\sum_{k=0}^i 2^{-k} \textrm{ for all } n \leq n_i. \label{eq:inductionstep}
\end{equation}
Note that the induction hypothesis is trivially true for~$i=0$, since $\test^\ast(\init)=1 \geq 0$ and~$\test_0(\init)=1 \leq 1$.
We start by showing that the process~$\test'_i$ defined by
\begin{equation*}
\test'_i(\sit) \coloneqq \sum_{k=0}^i 2^{-n_k-k}\test_k (\sit) \textrm{ for all } \sit \in \sits
\end{equation*}
is a recursive positive rational supermartingale for~$\nicefrac{1}{2}$.
Since this is a finite weighted sum of recursive positive rational test supermartingales for~$\nicefrac{1}{2}$ with positive rational coefficients, it immediately follows that it is a recursive positive rational process.
Hence, we are left with proving the supermartingale property.
From \ref{axiom:coherence:homogeneity} and~\ref{axiom:coherence:subsupadditivity}, it follows, for all~$\sit \in \sits$, that 
\begin{align*}
\ex_{\nicefrac{1}{2}}(\adddelta \test'_i(\sit))
=
\ex_{\nicefrac{1}{2}}\Bigg(\sum_{k=0}^i 2^{-n_k-k}\adddelta \test_k (\sit)\Bigg)
\leq
\sum_{k=0}^i 2^{-n_k-k} \ex_{\nicefrac{1}{2}}(\adddelta \test_k (\sit))
\leq
0
,
\end{align*}
where the last inequality holds because all~$\test_i$, with $i \in \naturalswithzero$, are supermartingales for~$\nicefrac{1}{2}$.
We conclude that $\test'_i$ is a recursive positive rational supermartingale for~$\nicefrac{1}{2}$.
From \equationref{eq:inductionstep}, we know that $\test'_i(\pthton) \leq \sum_{k=0}^{i} 2^{-k}$ for all~$n \leq n_i$.
If we now invoke \lemmaref{lem:recursive:path}, we find that there is a recursive path~$\pth'$ such that $\pthto{n_i}'=\pthto{n_i}$ and~$\test'_i(\pthton')\leq \sum_{k=0}^i 2^{-k}$ for all~$n \in \naturalswithzero$.
Since $\pth'$ is recursive, it follows from \lemmaref{lemma:14} that it is not \co-random for~$I$, and hence, by \propositionref{prop:relations:interval}, it is also not \wml-random for~$I$.
Consequently, $\limsup_{ n \to \infty} \test^\ast(\pthton')=\infty$, and therefore, there is a natural number~$n_{i+1} > n_i$ such that $\smash{\test^\ast}(\pthto{n_{i+1}}') \geq i+1$.
We put $\pthto{n_{i+1}} \coloneqq \pthto{n_{i+1}}'$.
Note that $\pthto{n_i}$ is unchanged.
By construction, it now holds that
\begin{multline}
\sum_{k=0}^i 2^{-n_k-k}\test_k (\pthton)
=
\sum_{k=0}^i 2^{-n_k-k}\test_k (\pthton') \\
=
\test_i'(\pthton') 
\leq
\sum_{k=0}^i 2^{-k}
\textrm{ for all } n \leq n_{i+1}. \label{eq:induction:part}
\end{multline}
Consider the recursive positive rational test supermartingale $\test_{i+1} \in \testscomp{\nicefrac{1}{2}}$.
From \lemmaref{lem:toandback:M:D}, it follows that $\multprocess_{\test_{i+1}}(\sit)>0$ and~$\ex_{\nicefrac{1}{2}}(\multprocess_{\test_{i+1}}(\sit))\leq1$ for all~$\sit \in \sits$, and hence, by \lemmaref{lem:bounded:above}, $0<\multprocess_{\test_{i+1}}(\sit) \leq 2$.
Consequently, $\test_{i+1}(\pthton)=\prod_{k=0}^{n-1}\multprocess_{\test_{i+1}}(\pthtok)(\pthatkplus)\leq 2^n$ for all~$n \in \naturalswithzero$, and hence,
\begin{equation*}
2^{-i-1}\frac{\test_{i+1}(\pthton)}{2^{n_{i+1}}} \leq 2^{-i-1} \textrm{ for all } n \leq n_{i+1}.
\end{equation*}
Therefore, and by \equationref{eq:induction:part}, it holds that
\begin{equation*}
\sum_{k=0}^{i+1} 2^{-n_k-k}\test_k (\pthton) \leq \sum_{k=0}^{i+1} 2^{-k} \textrm{ for all } n \leq n_{i+1}.
\end{equation*}
Since also $\test^\ast(\pthto{n_{i+1}}) \geq i+1$, this ends the induction step.

By observing that $0=n_0<n_1 < \dots < n_i < n_{i+1} < \dots$, the definition of $\pth$ implies that 
\begin{equation*}
\limsup_{n \to \infty}\test^\ast(\pthton)
=
\limsup_{i \to \infty}\test^\ast(\pthto{n_i})
\geq
\limsup_{i \to \infty} i
=
\infty.
\end{equation*}
Since $\test^\ast$ is a lower semicomputable test supermartingale for~$I$, it follows from \definitionref{def:notionsofrandomness} that $\pth$ is not \ml-random for~$I$.
%Since $y$ is recursive, it follows from \ref{lem:ml:rec} that there is an $n_{i+1}>n_i$ such that $\test^\ast(y_{1:n_{i+1}})>i+1$.
%Define $\pthto{n_i+1} \coloneqq y_{1:n_{i+1}}$.
%Note that $\lim_{i \to \infty}n_i=\infty$.
%The definition of $\pth$ implies that $\limsup_{k \to \infty} \test^\ast(\pthtok)=\infty$, and hence, by \definitionref{def:ML}, $\pth$ is not Martin-Löf random for~$\sqgroup{\delta,1-\delta}$.

On the other hand, by \equationref{eq:inductionstep} and since $0=n_0<n_1 < \dots < n_i < n_{i+1} < \dots$, there is for every~$n \in \naturalswithzero$ some~$i \in \naturalswithzero$ such that $n \leq n_i$ and
\begin{align*}
%\sum_{k=0}^{\infty} 2^{-n_k-k}\test_k(\pthton)
%=
\sum_{k=0}^{j} 2^{-n_k-k}\test_k(\pthton)
\leq
\sum_{k=0}^{j} 2^{-k}
%\leq
%\sum_{k=0}^{\infty} 2^{-k}
%\leq 2
\textrm{ for all } j \geq i.
\end{align*}
Consequently,
\begin{align*}
\sum_{k=0}^{\infty} 2^{-n_k-k}\test_k(\pthton)
\leq
\sum_{k=0}^{\infty} 2^{-k}
\leq 2.
\end{align*}
Since this is true for every~$n \in \naturalswithzero$, it follows that
\begin{align*}
\limsup_{n \to \infty} \sum_{k=0}^{\infty} 2^{-n_k-k}\test_k(\pthton)
\leq
2
.
\end{align*}
\begin{comment}
\begin{align*}
\limsup_{n \to \infty} \sum_{k=0}^{\infty} 2^{-n_k-k}\test_k(\pthton)
&\leq
\limsup_{n \to \infty} \limsup_{i \to \infty} \sum_{k=0}^{i} 2^{-n_k-k}\test_k(\pthton) \\
&=
\limsup_{i \to \infty} \sum_{k=0}^{i} 2^{-n_k-k}\test_k(\pthto{n_i}) \\
&\leq
\limsup_{i \to \infty} \sum_{k=0}^{i}2^{-k} 
=
2
\end{align*}
\end{comment}
This shows that every recursive positive rational test supermartingale $\test_i$ for~$\nicefrac{1}{2}$ remains bounded on~$\pth$.
Indeed, fix any~$i \in \naturalswithzero$ and assume \emph{ex absurdo} that $\limsup_{n \to \infty} \test_i(\pthton)=\infty$.
Consequently, 
\begin{equation*}
\limsup_{n \to \infty} 2^{-n_i-i}\test_i(\pthton)=\infty,
\end{equation*}
and hence, since all~$\test_k$ are positive,
\begin{align*}
\limsup_{n \to \infty} \sum_{k=0}^{\infty} 2^{-n_k-k}\test_k(\pthton)
\geq
\limsup_{n \to \infty} 2^{-n_i-i}\test_i(\pthton)
=
\infty
,
\end{align*}
a contradiction.
Hence, $\limsup_{n \to \infty}\test_i(\pthton)<\infty$ for all~$i \in \naturalswithzero$.
%It follows from \propositionref{prop:equivalence:compandrec} that $\pth$ is computably random for~$\nicefrac{1}{2}$.
\qed
\end{proof}

\begin{comment}

\end{comment}

{\noindent\bfseries Proof of \propositionref{prop:counterexample2}\quad}
Fix any $\delta \in \group{0,\nicefrac{1}{2}}$.
Consider any rational number~$\delta'$ such that $\delta<\delta'<\nicefrac{1}{2}$, and the interval forecasts~$I_{\delta}=\sqgroup{\nicefrac{1}{2}-\delta,\nicefrac{1}{2}+\delta}$ and~$I_{\delta'}=\sqgroup{\nicefrac{1}{2}-\delta',\nicefrac{1}{2}+\delta'}$.
By \propositionref{prop:diff:ml:comp}, there is a path~$\pth \in \pths$ that is \co-random for~$\nicefrac{1}{2}$ but not \ml-random for~$I_{\delta'}$.

To show that $\compI=\nicefrac{1}{2}$, we just observe that since $\nicefrac{1}{2} \in \compintervals$ and since $\compI$ is non-empty by \propertyref{prop:law:large:numbers}, it immediately follows that $\compI=\bigcap \compintervals = \nicefrac{1}{2}$.

We continue by showing that $I \notin \mlintervals$ for any $I \in \intervals$ such that $I \subseteq I_\delta$.
Assume \emph{ex absurdo} that $\pth$ is \ml-random for such an interval forecast $I$.
Since $I \subseteq I_\delta \subseteq I_{\delta'}$, it then holds by \propositionref{prop:ml:increasing} that $I_{\delta'} \in \mlintervals$, a contradiction.
%almost \ml-random for~$\nicefrac{1}{2}$.
%Consequently, $\pth$ is \ml-random for the interval forecast~$\sqgroup{\nicefrac{1}{2}-\nicefrac{\delta}{2},\nicefrac{1}{2}+\nicefrac{\delta}{2}}$.
%Since $\sqgroup{\nicefrac{1}{2}-\nicefrac{\delta}{2},\nicefrac{1}{2}+\nicefrac{\delta}{2}} \subseteq I$, it follows from \propositionref{prop:ml:increasing} that $I \in \mlintervals$, a contradiction.
\qed

\end{ArxiveExt}
\end{document}